\documentclass[letterpaper,11pt]{amsart}
\usepackage[margin=1in]{geometry}

\usepackage{latexsym,amsfonts,amssymb, amstext, mathrsfs, framed}
\usepackage{ytableau}
 \usepackage{graphicx,tikz, subcaption, tikz-cd}
\usepackage{mathrsfs,hyperref, amssymb}
\usepackage[utf8]{inputenc}
\usepackage[all]{xy}
\usetikzlibrary{decorations.markings, decorations.pathmorphing, arrows.meta, calc, bending}
\usepackage{amsmath}
\usepackage{amsthm}
\usepackage{psfrag}
\usepackage{epsfig}
\usepackage{enumitem}
\usepackage{tikz}
\usepackage{caption, wrapfig}
\usepackage{csquotes}

\tikzstyle{vertex}=[circle, draw, inner sep=0pt, minimum size=6pt]

\usepackage{xcolor} 
\usepackage[most]{tcolorbox}

 \newcommand{\uq}{U_q(\mathfrak{sl}_n)} 

\newcommand{\crosscurve}[5]{%
  \draw[#1, line width=1.1pt, postaction=decorate,
        decoration={markings, mark=at position #5 with {\arrow{Stealth[length=2.6mm]}}}]
    #2 to[bend left=#4] #3;}

\tikzset{midmark/.style={postaction=decorate,
  decoration={markings, mark=at position 0.5 with {\arrow{Stealth[length=2.6mm]}}}}}

\tikzset{contour/.style={line width=1.1pt, rounded corners=2.5pt, postaction=decorate,
  decoration={markings,
    mark=at position 0.50 with {\arrow{Stealth[length=2.6mm]}}}}}

\newcommand{\strandA}[1]{\draw[#1,contour]
  (1.328,-2.096)--(-0.072,-1.046)--(-0.12,-0.95)--(-0.12,0.95)--(-0.072,0.854)--(-1.472,1.904);}
\newcommand{\strandB}[1]{\draw[#1,contour]
  (1.472,1.904)--(0.072,0.854)--(0.12,0.95)--(0.12,-0.95)--(0.072,-1.046)--(-1.328,-2.096);}

\tikzset{>={Stealth[length=2.6mm]}}
\tikzset{axis/.style={dashed, line width=0.7pt}}
\colorlet{depthcolor}{brown!75!black}

\newtheorem{theorem}{Theorem}
\newtheorem{definition}[theorem]{Definition}
\newtheorem{lemma}[theorem]{Lemma}
\newtheorem{remark}[theorem]{Remark}
\newtheorem{example}[theorem]{Example}
\newtheorem{proposition}[theorem]{Proposition}
\newtheorem{corollary}[theorem]{Corollary}
\newtheorem{construction}[theorem]{Construction}

\title{Leading term strandings for webs}
\date{}

\author[Bo]{Michael Bo}
\address{University of Richmond, Jepson Hall, Richmond VA 23173}
\email{michael.bo@richmond.edu}

\author[Burns]{Madelyn Burns}
\address{University of Richmond, Jepson Hall, Richmond VA 23173}
\email{madelyn.burns@richmond.edu}

\author[Chen]{Junyang Chen}
\address{University of Richmond, Jepson Hall, Richmond VA 23173}
\email{junyang.chen@richmond.edu}

\author[Marsho]{Blaise Marsho}
\address{University of Richmond, Jepson Hall, Richmond VA 23173}
\email{blaise.marsho@richmond.edu}

\author[Martin]{Jacob Martin}
\address{University of Richmond, Jepson Hall, Richmond VA 23173}
\email{jacob.martin@richmond.edu}

\author[Mawn]{Jade Mawn}
\address{University of Richmond, Jepson Hall, Richmond VA 23173}
\email{jade.mawn@richmond.edu}

\author[Mohren]{Bella Mohren}
\address{Georgia Institute of Technology, 686 Cherry Street, Atlanta, GA 30332}
\email{imohren3@gatech.edu}

\author[Russell]{Heather M. Russell}
\address{University of Richmond, Jepson Hall, Richmond VA 23173}
\email{hrussell@richmond.edu}

\author[Sales]{Caitlin Sales}
\address{University of Richmond, Jepson Hall, Richmond VA 23173}
\email{caitlin.sales@richmond.edu}

\author[Wong]{Lance Wong}
\address{University of Richmond, Jepson Hall, Richmond VA 23173}
\email{lance.wong@richmond.edu}

\begin{document}

\begin{abstract}
A web is a plane graph encoding an invariant vector in a tensor product of
fundamental representations of a quantum group. A stranding of an
$\mathfrak{sl}_n$ web is a system of colored oriented curves recording one
monomial of the vector it encodes. This article focuses on identifying and
constructing \emph{leading term strandings}, those recording the leading term
of a web's vector with respect to a lexicographic order on monomials. We show
that every open strand of a leading term stranding is clockwise, which
constrains the boundary data of such strandings enough to yield a sufficient
criterion for a set of webs to form a web basis. From a row-strict tableau, we
construct a web with a prescribed leading term, and the resulting webs form a
web basis, giving a non-recursive construction of Fontaine's
$\mathfrak{sl}_n$ web bases. For $\mathfrak{sl}_3$ webs with no flat vertices,
we identify a leading term stranding using the depths of the faces of the web.
Finally, we show a leading term stranding for any $\mathfrak{sl}_3$ web can
be reached from an arbitrary stranding via a sequence of operations called
strand reversals.
\end{abstract}

\maketitle

\section{Introduction}\label{section: introduction}
Webs are plane graphs that encode invariant vectors in tensor products of fundamental
representations of a quantum group; here, we focus on $\mathfrak{sl}_n$ webs. A
central and longstanding problem is to identify \emph{web bases}: sets of web graphs
whose vectors form a basis of an invariant space. Web bases are not unique, but in
low rank, distinguished bases with strong combinatorial structure have been identified. For
$\mathfrak{sl}_2$, these are given by  Temperley--Lieb diagrams
\cite{TemperleyLieb}, crossingless matchings enumerated by the Catalan numbers. For $\mathfrak{sl}_3$, Kuperberg identified the
basis of nonelliptic webs \cite{KuperbergWebs}. Khovanov and Kuperberg gave a recursive algorithm building the elements
of this basis from three-row tableaux \cite{KK}, and Tymoczko \cite{TSimpleBij} and
later Russell \cite{RussellSemistandard} gave an explicit
construction producing the same basis without appealing to recursion. These tableaux reflect the structure of the corresponding webs, support the study of group actions on webs \cite{PPR, RTshadow, RTTransitionsl2,  TSimpleBij}, and
record which monomial of the web vector is least in the lexicographic order on the
monomial basis \cite{BazierLeading}.

For $n\geq 4$ the available results are less complete. Westbury \cite{WestburyBases}
constructs web bases when each tensor factor is the standard representation or its dual,
and Fontaine \cite{FON} extends this to arbitrary tensor products of fundamental
representations; Elias \cite{Eli15} obtained closely related bases in the quantum
setting. Each of these builds its webs by a recursive process, in layers rather than in
a single step. The resulting bases are parameterized by tableaux, but we lack a simple
criterion for deciding whether a given web is a basis element. These bases also lack some of the combinatorial
features of the low-rank bases; in particular, they are not invariant under rotation of
the boundary \cite{GaetzetalRotation}. For $n=4$, Gaetz, Pechenik, Pfannerer, Striker,
and Swanson \cite{GaetzetalRotation} construct a rotation-invariant basis indexed by
fluctuating tableaux, again recursively, by way of a large system
of growth rules.

To establish other bases and carry out computations for general $n$, we need combinatorial tools to extract algebraic information from webs. Many of the available tools are
tied to the settings in which they were developed. Fontaine works with coherent webs --- a restriction imposed by the geometric setting used to establish his
bases. The approach of \cite{GaetzetalRotation} is
carried out for fully reduced graphs, again a restricted class.  What these approaches to establishing bases have in common is a triangularity argument, which hinges
on isolating a leading term with respect to a chosen order. For bases of tensor
invariants, that leading term is a monomial in the expansion of the web vector. For the
distinguished low-rank web bases, there is a straightforward process for reading the leading term directly from the web graph.

With this in
mind, the question driving our work is:

\begin{tcolorbox}[boxrule=0.4pt, colback=white, colframe=black]
\begin{center}
    \emph{For an arbitrary web, how can we use its structural properties\\
    to infer the
    leading term of its web vector?}
\end{center}
\end{tcolorbox}
\noindent We approach this through \emph{strandings} \cite{RTStranding}, which were developed as a tool for working with arbitrary $\mathfrak{sl}_n$ webs. A stranding of a web $G$
is a system of colored oriented curves drawn on $G$. A single web supports many valid
strandings, and they collectively encode the web vector; each stranding contributes a single monomial, and these contributions never cancel. A stranding whose monomial
is the leading term of the web vector is a \emph{leading term stranding}.

The notion of depth relates the complexity of a web to the strandings it supports. Each
face of a web carries an \emph{integral depth}, coming from distance in the web graph's dual, and
every stranding assigns to a face a \emph{strand depth}, counting the simple strands that
enclose it. Strand depth never exceeds integral depth (Lemma~\ref{lem: depth bound}). Since integral depth depends only on the web, this bound constrains which strandings a web can support, and it is our main tool for identifying leading term strandings. Integral and strand depth build upon Tymoczko's
path and circle depth for $\mathfrak{sl}_3$ webs \cite{TSimpleBij}.

In this article, we work with \emph{untagged} $\mathfrak{sl}_n$ webs, which are directed, edge-weighted plane
graphs satisfying a flow condition mod $n$ at each trivalent interior vertex, as in
\cite{FontaineThesis, FON, KuperbergWebs,  WestburyBases}. The tagged webs of \cite{CKM}
matter for categorical and sign-sensitive purposes, but tags complicate the explicit
combinatorial calculations carried out here.

The paper is organized as follows.
Section~\ref{section: background} fixes our conventions for webs, collects the
definitions and results on strandings that we use from \cite{RTStranding}, and develops
the depth framework. Section~\ref{section: web vectors} briefly reviews the algebraic setup for quantum $\mathfrak{sl}_n$ invariants, presents the web vector formula in terms of strandings, and proves our first result: in a leading term
stranding, every open strand is clockwise, and consequently the tableau recording its
boundary data is row-strict (Theorem~\ref{thm: leading term row strict}). This yields a
sufficient combinatorial condition for a set of webs to be a basis
(Corollary~\ref{cor: basis criterion via tableaux}). The condition imposes no structural
restriction on the individual webs, but it does require identifying leading term
strandings. The remaining two sections address that problem from two directions: building a
web with a prescribed leading term, and finding the leading term of a given web.

Section~\ref{section: matchings} builds a web with a prescribed leading term. From a
row-strict tableau we construct a system of colored arcs, a \emph{colored noncrossing
matching}, and then produce a web by local modifications at the crossings and boundary
vertices of that matching, with the arcs of the matching recovering a leading term stranding of the resulting web. The
construction is explicit rather than recursive, playing for general $n$ the role that
\cite{RussellSemistandard, TSimpleBij} play for $\mathfrak{sl}_3$, and the webs so
obtained form a web basis indexed by row-strict tableaux
(Corollary~\ref{cor: matching produces basis}). When $n=3$, the construction returns exactly the nonelliptic web basis of Kuperberg.
Starting at $n=4$, the matching arcs must be ordered at each boundary
vertex, a choice also present in Fontaine's algorithm, but our construction has
additional flexibility coming from isotopy of the matching arcs away from the boundary. Thus, the webs produced by Fontaine's algorithm are a proper subset of those realized by our construction. (See Example~\ref{ex: same leading term} for more details.)

Section~\ref{section: sl3 stranding} seeks the leading term of a given web. For
$\mathfrak{sl}_3$ webs with no \emph{flat} vertices, integral depth determines a unique
compatible stranding, and that stranding is a leading term stranding
(Theorem~\ref{thm:sl3 stranding}). This class includes every nonelliptic web together
with some, though not all, elliptic ones, so the leading term becomes available for
a strictly larger class of webs than was previously accessible. Such elliptic webs arise in the
parametrization of top-dimensional components of three-row Springer fibers \cite{HLTT}.
Flat vertices obstruct this argument, and for $n\geq 4$, integral depth is too coarse to
single out a stranding at all. As an alternative we consider reconfiguration: strand
reversal acts on the set of valid strandings, and for $\mathfrak{sl}_3$ webs this action
is transitive (Corollary~\ref{cor: sl3 kempe}), so a leading term stranding is always
reachable from any starting stranding by a sequence of strand reversals. It remains open to produce such a sequence
algorithmically, and for $n\geq 4$ we do not know whether the action is transitive.

\section*{Acknowledgements}
We thank Julianna Tymoczko for proposing the strand reversal operation and for her ongoing collaboration on the development of
strandings. Heather M. Russell was supported by an AMS--Simons Research
Enhancement Grant for PUI Faculty, a University of Richmond sabbatical
fellowship, and the Budapest Semesters in Mathematics Director's Mathematician
in Residence program. The authors also gratefully acknowledge the support of
summer research funding from the University of Richmond. The authors used Anthropic's Claude Opus and Claude Sonnet models to assist with the organization and editing of this article and to convert hand-drawn figures into TikZ code. All mathematical content is the authors' own.

\section{Webs, strandings, and depth}\label{section: background}

\subsection{Half-plane graphs and webs}\label{subsection: webs}

All the graphs in this paper are embedded in a half-plane with some of their
vertices on its boundary line.

\begin{definition}[Half-plane graphs]\label{def: half-plane graph}
Fix a horizontal line in $\mathbb{R}^2$, called the \textbf{boundary axis}. A
\textbf{half-plane graph} is a finite graph $G$ embedded in the closed half-plane
below the boundary axis, taken up to planar isotopy fixing that axis, meeting it
in a finite set of vertices.
\begin{itemize}
\item Vertices of $G$ on the boundary axis are \textbf{boundary vertices}; all
others are \textbf{interior vertices}. An edge incident to a boundary vertex is a
\textbf{boundary edge}; every other edge is an \textbf{interior edge}.
\item The \textbf{faces} of $G$ are the connected components of the closed
half-plane with $G$ removed. The unbounded face is denoted $U$. A face containing points of the boundary axis is a \textbf{boundary face}; the rest are \textbf{interior faces}.
\end{itemize}
\end{definition}

\begin{remark}
In \cite{RTshadow}, half-plane graphs are additionally required to have
trivalent interior vertices and univalent boundary vertices. We impose no condition on vertex degrees here,
because we apply the definition not only to webs but to the
noncrossing matchings of Section~\ref{section: matchings}, whose vertices may have higher degree.
\end{remark}

The primary example of a half-plane graph in this paper is a web: a half-plane
graph whose edges are directed and weighted, subject to a congruence condition
at each interior vertex. To state that condition, we first record the direction
of an edge relative to an incident vertex. For a vertex $v$ and an oriented edge
$e$ incident to $v$, write
$$
\sigma_v(e)=
\begin{cases}
1 & \textup{if $e$ is directed into $v$},\\
-1 & \textup{if $e$ is directed out of $v$}.
\end{cases}
$$

\begin{definition}[Webs]\label{def: webs}
Fix an integer $n\geq 2$.
\begin{itemize}
    \item An {\bf $\mathfrak{sl}_n$ web} (or simply {\bf web} when $n$ is clear from context) is a directed half-plane graph $G$ with edge weights in $\{1,\ldots,n-1\}$. Boundary vertices are univalent, and interior vertices are trivalent. At each interior vertex $v$ with incident edges $e_1,e_2,e_3$ of weights $\ell_1,\ell_2,\ell_3$, an $\mathfrak{sl}_n$ web satisfies the following congruence:
$$
\sum_{i=1}^3 \sigma_v(e_i)\ell_i \equiv 0 \pmod n.
$$
\item Given a web with boundary vertices $v_1,\ldots,v_m$ read from left to right and incident boundary edges $e_i$ of weights $\ell_i$, its {\bf boundary weight vector} is $\vec{k}=(k_1,\ldots,k_m)$, where
$$
k_i=
\begin{cases}
\ell_i & \textup{if } \sigma_{v_i}(e_i)=1,\\
n-\ell_i & \textup{if } \sigma_{v_i}(e_i)=-1.
\end{cases}
$$

\item Let $F_n(\vec{k})$ denote the set of $\mathfrak{sl}_n$ webs with boundary weight vector $\vec{k}$.
\end{itemize}
\end{definition}

\begin{example}\label{ex: running web}
Figure~\ref{fig: running web} shows a web $G\in F_4(1,1,1,1,1,1,1,1)$. Single edges have weight $1$, and double edges have weight 2. We use this web throughout Section~\ref{section: background}  to illustrate various aspects of the stranding setup.
\begin{figure}[h]
\centering
\begin{tikzpicture}[scale=1, yscale=-1]
\draw[axis, <->] (.5,.25)--(8.5,.25);

\begin{scope}[line width=0.7pt,decoration={markings,mark=at position 0.5 with {\arrow{<}}}]
\draw[postaction={decorate}] (1,.25)--(2,1);
\draw[postaction={decorate}] (2,.25)--(2,1);
\draw[postaction={decorate}] (3,.25)--(3.25,1);
\draw[postaction={decorate}] (4,.25)--(3.75,1);
\draw[postaction={decorate}] (3.25,2)--(3.25,1);
\draw[postaction={decorate}] (3.25,2)--(4.5,2.5);
\draw[postaction={decorate}] (8,.25) to[out=120, in=0] (4.5,2.5);
\draw[postaction={decorate}] (4.5,1.5)--(3.75,1);
\draw[postaction={decorate}] (4.5,1.5)--(5,1);
\draw[postaction={decorate}] (5,.25)--(5,1);
\draw[postaction={decorate}] (6,.25)--(6.5,1);
\draw[postaction={decorate}] (7,.25)--(6.5,1);
\end{scope}

\begin{scope}[line width=0.7pt,decoration={markings,mark=at position 0.5 with {\arrow{<}}}]
\draw[double, postaction={decorate}] (2,1)--(3.25,2);
\draw[double, postaction={decorate}] (3.25,1)--(3.75,1);
\draw[double, postaction={decorate}] (5,1)--(6.5,1);
\draw[double, postaction={decorate}] (4.5,1.5)--(4.5,2.5);
\end{scope}

\foreach \x in {1,...,8}
  \fill(\x,.25) circle (2.5pt);
\fill(2,1) circle (2.5pt);
\fill(3.25,1) circle (2.5pt);
\fill(3.75,1) circle (2.5pt);
\fill(5,1) circle (2.5pt);
\fill(6.5,1) circle (2.5pt);
\fill(4.5,1.5) circle (2.5pt);
\fill(3.25,2) circle (2.5pt);
\fill(4.5,2.5) circle (2.5pt);
\end{tikzpicture}
\caption{An $\mathfrak{sl}_4$ web.}\label{fig: running web}
\end{figure}
\end{example}

\subsection{Curve systems, strandings, and binary labelings}\label{subsection: strandings and binary labelings}

We are interested in networks of oriented curves that run along the edges of the half-plane graphs described above.

\begin{definition}[Arcs, loops, and curve systems]\label{def: arcs and loops}
Let $G$ be a half-plane graph. 
\begin{itemize}
    \item An \textbf{arc on $G$} is an oriented, simple path in $G$ with both endpoints on the boundary axis. An arc is clockwise (resp. counterclockwise) if its terminal vertex is to the left (resp. right) of its initial vertex. 
    \item A \textbf{loop on $G$} is an oriented, simple closed path in $G$; it is clockwise or counterclockwise according to its orientation. 
    \item A \textbf{curve system} on $G$ is a collection of arcs and loops on $G$. A curve of the system is \textbf{open} if it is an arc and \textbf{closed} if it is a loop.
    \item A curve \textbf{encloses} a face $F$ of $G$ if it separates $F$ from $U$ in the half-plane. 
    \end{itemize}
    If $G$ is directed, the orientations of arcs and loops on $G$ need not agree with the orientations of the edges they traverse. 
\end{definition}

The primary example of a curve system in this paper is a stranding of a web, in which the arcs and loops are colored so that those of the same color are pairwise disjoint, subject to local conditions along each edge. The next two
definitions record the same data in different ways -- globally, as a curve
system on $G$, and locally, as a binary vector on each edge. Our notation
differs slightly from \cite[Section 4]{RTStranding}.

\begin{definition}[Strandings]\label{def: stranding}
Let $G\in F_n(\vec{k})$.
\begin{itemize}
\item A {\bf stranding} $S$ of $G$ is a curve system on $G$ whose curves, called {\bf simple strands}, are colored by $\{1,\ldots,n-1\}$ such that simple strands of the same color are pairwise disjoint. 

\item For an edge $e$ of $G$, write
$$
S(e)=S^+(e)\sqcup S^-(e),
$$
where $S^+(e)$ (resp. $S^-(e)$) is the set of colors of simple strands directed with (resp. against) the orientation of $e$. Define
$$
\alpha_c^\vee(S(e))=
\begin{cases}
1 & \textup{if } c\in S^+(e),\\
-1 & \textup{if } c\in S^-(e),\\
0 & \textup{if } c\notin S(e).
\end{cases}
$$
\item A stranding is {\bf valid} if for each edge $e$ of $G$ the following two conditions are met.
\begin{enumerate}
    \item The nonzero values $\alpha_c^\vee(S(e))$ alternate in sign as $c$ increases.
    \item For the largest color $c_{max}$ in $S(e)$, we have
    $$
    \sum_{c=1}^{n-1}\alpha_c^\vee(S(e))c=
    \begin{cases}
        \ell & \textup{if } c_{\max}\in S^+(e),\\
        \ell-n & \textup{if } c_{\max}\in S^-(e).
    \end{cases}
    $$
\end{enumerate}
\end{itemize}
Write $\mathcal{S}tr(G)$ for the set of valid strandings of $G$.
 \end{definition}

Next, we observe for any set of simple strands satisfying validity condition (1), we can choose an edge orientation so that the
alternating sum in condition (2) lies in
$\{1,\ldots,n-1\}$. This is relevant to the construction in
Section~\ref{section: matchings} where we first choose a stranding and then build a web that is compatible with it.

\begin{lemma}\label{lem: alternating sum weight}
Let $S$ be an alternating-sign collection of colored simple strands between vertices $u$ and $v$, with the largest color directed into $v$. Let $e:=u\mapsto v$, weighted by $\ell:=\sum_{c=1}^{n-1}\alpha_c^\vee(S(e))c$. Then $\ell\in\{1,\ldots,n-1\}$, and $S(e)$ is a valid stranding of $e$.
\end{lemma}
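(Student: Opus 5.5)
The plan is to reduce to a statement about alternating sums of an increasing sequence of colors. List the colors in $S(e)$ as $c_1<c_2<\cdots<c_k$ with $k\geq 1$; implicitly $S$ is nonempty, since otherwise $\ell=0$. By hypothesis the largest color $c_k$ is directed from $u$ toward $v$, so it lies in $S^+(e)$ for the orientation $e:=u\mapsto v$. By validity condition (1) the signs alternate, so $\alpha^\vee_{c_j}(S(e))=(-1)^{k-j}$. Hence
$$\ell=c_k-c_{k-1}+c_{k-2}-\cdots\pm c_1.$$

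First I bound $\ell$ by grouping terms in pairs from the top. Write $\ell=(c_k-c_{k-1})+(c_{k-2}-c_{k-3})+\cdots$, with a lone $+c_1$ at the end when $k$ is odd. Each pair difference is at least $1$ because the colors are strictly increasing, and $c_1\geq 1$. Therefore $\ell\geq 1$.

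For the upper bound I group the other way: $\ell=c_k-(c_{k-1}-c_{k-2})-(c_{k-3}-c_{k-4})-\cdots$, with a final $-c_1$ when $k$ is even. Each subtracted quantity is positive, so $\ell\leq c_k\leq n-1$. Thus $\ell\in\{1,\ldots,n-1\}$, and $e$ with weight $\ell$ is a legitimate web edge.

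Validity of $S(e)$ on $e$ then follows directly. Condition (1) holds by hypothesis. For condition (2), $c_{\max}=c_k\in S^+(e)$, so the required value is exactly $\ell$, and this is the defining equation for $\ell$.

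There is no real obstacle here. The only care needed is the sign bookkeeping: I must make sure that "largest color directed into $v$" translates to $c_{\max}\in S^+(e)$ under the chosen orientation $u\mapsto v$. I also need to handle the parity of $k$ in both groupings.
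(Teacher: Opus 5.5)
Your proof is correct and follows the paper's argument. Pairing consecutive terms from the top gives $\ell\ge 1$. Your complementary grouping $c_k-\ell=(c_{k-1}-c_{k-2})+(c_{k-3}-c_{k-4})+\cdots$ is the same as the paper's observation that the terms omitted from the telescoping sum for $c_k$ are positive, so $\ell\le c_k\le n-1$. You also explicitly check validity conditions (1) and (2) on $e$, which the paper leaves implicit.
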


\begin{proof}
List the colors of $S(e)$ in increasing order as $0<c_1<c_2<\cdots<c_k<n$. Since the nonzero values $\alpha_c^\vee(S(e))$ alternate in sign and the largest color $c_k$ is directed with $e$, we have $\alpha_{c_i}^\vee(S(e))=(-1)^{i+k}$, so that
$$
\ell=\sum_{i=1}^k (-1)^{i+k}c_i.
$$
Set $c_0:=0$. Pairing consecutive terms from the top,
$\ell=\sum_{j=0}^{\lceil k/2\rceil -1}(c_{k-2j}-c_{k-2j-1})$, a sum of strictly positive terms, so $\ell>0$. These terms are among the terms of the telescoping sum $(c_k-c_{k-1})+(c_{k-1}-c_{k-2})+\cdots+(c_1-c_0)=c_k$, and every omitted term is positive. Hence $0<\ell\leq c_k<n$ and $\ell\in\{1,\ldots,n-1\}$.
\end{proof}

\begin{definition}[Binary labelings]\label{def: binary labeling}
Let $G\in F_n(\vec{k})$. 
\begin{itemize}
    \item A {\bf binary labeling} of $G$ is an assignment of a binary vector $b(e)=b_1\cdots b_n\in\{0,1\}^n$ to each edge $e$ such that $|b(e)|:= b_1 + \cdots + b_n = \ell$ if $e$ has weight $\ell$. 

    \item For $1\leq c<n$, define
$$
\alpha_c^\vee(b(e))=b_{c}-b_{c+1}.
$$ 
\item We call a binary labeling {\bf valid} if, at each interior vertex $v$ with incident edges $e_1,e_2,e_3$ and each $1\leq c< n$, we have
$$
\sum_{i=1}^3 \sigma_v(e_i)\,\alpha_c^{\vee}(b(e_i))=0.
$$
In other words, $\sum_{i=1}^3 \sigma_v(e_i)\,b(e_i)_j$ is constant across all $1\leq j\leq n$. 
\end{itemize}
Write $\mathcal{B}in(G)$ for the set of valid binary labelings of $G$.
 \end{definition}

The following technical lemma follows from the definition of a valid binary labeling. It will be useful in Section~\ref{section: matchings}.

\begin{lemma}\label{lem: flow condition on binary labels}
Let $G\in F_n(\vec{k})$, and let $v$ be an interior vertex with incident edges $e_1, e_2, e_3$ of weights $\ell_1, \ell_2, \ell_3$ such that $\sum_{i=1}^3 \sigma_v(e_i)\ell_i=0$. Consider $b\in \mathcal{B}in(G)$. Then $\sum_{i=1}^3 \sigma_v(e_i)b(e_i)=\vec{0}$. In other words, at such a vertex the binary label of an edge is a $\{\pm 1\}$-linear combination of the binary labels of the other two edges.
\end{lemma}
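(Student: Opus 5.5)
Write $w:=\sum_{i=1}^3 \sigma_v(e_i)\,b(e_i)\in\mathbb{Z}^n$; the plan is to show $w=\vec{0}$ in two steps.

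\emph{Step 1: $w$ is constant.} By Definition~\ref{def: binary labeling}, validity of $b$ at $v$ says that for each $1\leq c<n$,
$$\sum_{i=1}^3 \sigma_v(e_i)\bigl(b(e_i)_c-b(e_i)_{c+1}\bigr)=0,$$
that is, $w_c=w_{c+1}$. Hence all coordinates of $w$ equal a common value $m\in\mathbb{Z}$, so $w=(m,\ldots,m)$.

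\emph{Step 2: $m=0$.} Sum the coordinates of $w$. On one hand this sum is $nm$. On the other hand, since $|b(e_i)|=\ell_i$ by the definition of a binary labeling,
$$nm=\sum_{j=1}^n w_j=\sum_{i=1}^3 \sigma_v(e_i)\,|b(e_i)|=\sum_{i=1}^3 \sigma_v(e_i)\ell_i=0,$$
where the last equality is the hypothesis on $v$. Since $n\geq 2$, we get $m=0$, and therefore $w=\vec{0}$.

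The ``in other words'' clause follows by solving $w=\vec{0}$ for any one term. For example, $b(e_1)=-\sigma_v(e_1)\bigl(\sigma_v(e_2)b(e_2)+\sigma_v(e_3)b(e_3)\bigr)$, and every coefficient here lies in $\{\pm1\}$. No step is a real obstacle. The only point needing care is that validity alone gives only $w$ constant, which is consistent with the mod-$n$ congruence; the exact (integer) flow hypothesis is what forces the constant to be zero.
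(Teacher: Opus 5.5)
Your proof is correct and follows essentially the same route as the paper: validity makes all entries of $w=\sum_{i=1}^3\sigma_v(e_i)b(e_i)$ equal, and summing entries together with the exact flow hypothesis forces that common value to be $0$. Your added remark that the exact (not merely mod-$n$) condition is what kills the constant is a nice clarification, and the ``in other words'' clause follows as you say.
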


\begin{proof}
Set $w=\sum_{i=1}^3 \sigma_v(e_i)b(e_i)$. Validity of $b$ says
$\alpha_c^{\vee}(w)=w_c-w_{c+1}=0$ for all $1\leq c<n$, so all entries of $w$ are
equal. Summing the entries of $b(e_i)$ gives $\ell_i$, so the entries of $w$ sum to
$\sum_{i=1}^3\sigma_v(e_i)\ell_i=0$. Hence $w=\vec 0$.
\end{proof}

It is convenient to express binary labels in terms of the fundamental-weight
binary vectors. For $1\leq c\leq n-1$, let
$$
\lambda_c := \underbrace{1\cdots 1}_{c}\underbrace{0\cdots 0}_{n-c}\;\in\;\{0,1\}^n,
$$
the binary representative of the $c$th fundamental weight of $\mathfrak{sl}_n$. Stranding and binary labeling are equivalent via the bijection described below.

\begin{theorem}[{\cite[Theorem 37]{RTStranding}}]\label{thm:bijection} Given a web $G\in F_n(\vec{k})$, the following mutually inverse maps define a bijection between $\mathcal{B}in(G)$ and $\mathcal{S}tr(G)$. 

\begin{itemize}
\item Let $b\in \mathcal{B}in(G)$. For each edge $e$ in $G$, define the stranding $S_b$ by

$$S_b^+(e)=\{c:\alpha_c^{\vee}(b(e))=1\} \text{ and } S_b^-(e)=\{c:\alpha_c^{\vee}(b(e))=-1\}.$$ 

\item Let $S\in\mathcal{S}tr(G)$. For each edge $e$ in $G$ where $c_{\max}$ is the largest value in $S(e)$, define the binary labeling
$$
b_S(e)=
\begin{cases}
\displaystyle\sum_{c=1}^{n-1}\alpha_c^\vee(S(e))\,\lambda_c 
   & \textup{if } c_{\max}\in S^+(e),\\[1.2em]
\vec{1}+\displaystyle\sum_{c=1}^{n-1}\alpha_c^\vee(S(e))\,\lambda_c 
   & \textup{if } c_{\max}\in S^-(e).
\end{cases}
$$
\end{itemize}

\end{theorem}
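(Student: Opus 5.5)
The plan is to check three things in turn: each map lands in the claimed set (edgewise validity for $S_b$, vertex validity for $b_S$), and the two composites are identities. Everything reduces to linear algebra on a single edge, plus a translation of the vertex condition. The key observation is that $\alpha^\vee_c$ is linear on $\mathbb{Z}^n$, with $\alpha^\vee_c(\lambda_d)=\delta_{cd}$ and $\alpha^\vee_c(\vec 1)=0$.

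\emph{Step 1 (edgewise, $b\mapsto S_b$).} Fix an edge $e$ of weight $\ell$ with label $b=b(e)$. The nonzero values $b_c-b_{c+1}$ correspond to the positions where the $0/1$ string changes value. Such changes necessarily alternate between $1\to0$ (value $+1$) and $0\to1$ (value $-1$), which gives condition (1). For condition (2), compute
\[
\sum_c (b_c-b_{c+1})c=\sum_{c=1}^{n} b_c - n b_n = \ell - n b_n
\]
by summation by parts. The last change at $c_{\max}$ is $+1$ exactly when $b_n=0$, and it is $-1$ exactly when $b_n=1$. This is precisely condition (2). If $S_b(e)=\emptyset$, then $b$ is constant and $\ell\in\{0,n\}$, which is excluded since $1\le \ell\le n-1$.

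\emph{Step 2 (edgewise, $S\mapsto b_S$).} Given a valid $S(e)$, set $w=\sum_c\alpha^\vee_c(S(e))\lambda_c$. Its $j$th entry is $\sum_{c\ge j}\alpha^\vee_c$, a tail sum of an alternating $\pm1$ sequence. Such a tail sum therefore lies in $\{0,1\}$ when the top sign is $+$, and in $\{-1,0\}$ when it is $-$. Hence $b_S(e)\in\{0,1\}^n$ in both cases. Its weight is $|w|=\sum_c\alpha^\vee_c c$ in the first case and $n+\sum_c\alpha^\vee_c c$ in the second, and either way this equals $\ell$ by condition (2). Linearity gives $\alpha^\vee_c(b_S(e))=\alpha^\vee_c(S(e))$. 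Consequently $S_{b_S}=S$ edgewise.

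\emph{Step 3 (the other composite).} Conversely, $b_{S_b}(e)$ and $b(e)$ have the same $\alpha^\vee_c$ for all $c$, so they differ by a multiple of $\vec 1$. Since both are binary vectors of weight $\ell$ with $0<\ell<n$, they are equal.

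\emph{Step 4 (global conditions).} This is the step I expect to carry the real content. The vertex condition on $b$ says that $\sum_i\sigma_v(e_i)\alpha^\vee_c(b(e_i))=0$ for each $c$. I will argue that, after the translation $\alpha^\vee_c(b)=\pm1 \leftrightarrow$ a $c$-colored strand with or against $e$, this is exactly the statement that color-$c$ strands pass through $v$ consistently. By "consistently" I mean two things: at $v$, either no edge carries color $c$, or exactly two edges do, with the strand entering on one and leaving on the other. The count $\sum\sigma_v\alpha^\vee_c\in\{0,\pm1,\pm2,\pm3\}$ vanishes only in these configurations, because three nonzero $\pm1$ terms cannot sum to $0$. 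Thus a valid $b$ yields color-$c$ edges forming a disjoint union of simple paths and cycles with coherent orientations. These end only at univalent boundary vertices, so they are arcs and loops, and same-colored curves are disjoint since each vertex meets at most one. Conversely, a stranding's curves pass through each interior vertex in this local form, so the vertex sums vanish and $b_S$ is valid. The main care needed is matching the sign conventions of $\sigma_v$ with ``with/against $e$''. This is a direct check: a strand entering $v$ along $e$ contributes $\sigma_v(e)\alpha^\vee_c=+1$, and one leaving contributes $-1$.
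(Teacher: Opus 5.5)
Your proof is correct. The paper gives no proof of this statement. It imports the result from \cite[Theorem 37]{RTStranding}, and its only accompanying content is the remark that $b_S(e)$ is the binary vector whose last entry is $0$ or $1$ according as $c_{\max}\in S^+(e)$ or $c_{\max}\in S^-(e)$, and which satisfies $\alpha_c^\vee(b_S(e))=\alpha_c^\vee(S(e))$. That is exactly what your Step~2 establishes, using linearity together with $\alpha_c^\vee(\lambda_d)=\delta_{cd}$ and $\alpha_c^\vee(\vec 1)=0$.

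So your argument is a self-contained verification rather than an alternative to a proof in the paper. Its ingredients are the natural ones:
\begin{itemize}
\item summation by parts for validity condition (2);
\item the fact that a vector killed by every $\alpha_c^\vee$ is constant, which gives Step~3;
\item the translation of the vertex condition into ``one color-$c$ strand in, one out.''
\end{itemize}

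Two small points deserve an explicit sentence each.

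First, ``$S_{b_S}=S$ edgewise'' gives equality of strandings as curve systems only because a stranding is determined by its edge data. Pairwise disjointness of same-colored curves makes the color-$c$ curves exactly the connected components of the color-$c$ edge set. This is the same fact you already use in Step~4 to assemble $S_b$ into arcs and loops.

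Second, in the converse direction of Step~4, say why the local picture holds. By disjointness, at most one color-$c$ curve meets a given interior vertex $v$. That curve cannot terminate at $v$, because arcs end on the boundary axis, so it contributes exactly one $+1$ and one $-1$ to $\sum_i\sigma_v(e_i)\,\alpha_c^\vee(b_S(e_i))$.
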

Unwinding the second map, $b_S(e)$ is the binary vector whose last entry is $0$
if $c_{\max}\in S^+(e)$ and $1$ if $c_{\max}\in S^-(e)$, and which satisfies
$\alpha_c^\vee(b_S(e))=\alpha_c^\vee(S(e))$ for all $1\leq c\leq n-1$.

A valid stranding induces a larger collection of curves on $G$, indexed by pairs $1\leq i<j\leq n$.

\begin{definition}[$(i,j)$-Strands]\label{def: ij-strands}
Let $G\in F_n(\vec{k})$ and $S\in\mathcal{S}tr(G)$.
\begin{itemize}
\item For each edge $e$ of $G$ with $b_S(e)=b_1\cdots b_n$, define
$$
L(e)=\left\{(i,j): 1\le i<j\le n,\ b_i\neq b_j\right\}=L^+(e)\sqcup L^-(e),
$$
where
$$
L^\pm(e)=\left\{(i,j): 1\le i<j\le n,\ b_i-b_j=\pm 1\right\}.
$$

\item For each $1\leq i<j\leq n$, let $L_{(i,j)}(S)$ be the curve system on $G$ that traverses each edge $e=u\mapsto v$ of $G$
\begin{itemize}
\item from $u$ to $v$ if $(i,j)\in L^+(e)$,
\item from $v$ to $u$ if $(i,j)\in L^-(e)$,
\item and not at all if $(i,j)\notin L(e)$.
\end{itemize}
These assemble into arcs and loops \cite[Lemma~25]{RTStranding}, and we call each connected component of $L_{(i,j)}(S)$ an {\bf $(i,j)$-strand} of $S$.
\end{itemize}
\end{definition}

Since $\alpha_c^\vee(b_S(e))=b_c-b_{c+1}$, the simple strands of color $c$ in $S$ are exactly the $(c,c+1)$-strands.

\begin{example}\label{ex: ij-strands}
On the left in Figure~\ref{fig: running stranding}, we have an example of a stranding $S$ of the web from Figure~\ref{fig: running web} together with the corresponding binary labeling $b_S$. We draw simple strands of colors $1,2,3$ in blue, red, and green, respectively. On the right, we illustrate the collection of $(2,4)$-strands in orange.

Consider the interior vertical edge $e$ of weight $\ell=2$ in the center of the web which is stranded with blue and green simple strands directed with $e$ and a red simple strand directed against $e$. This implies $S^{+}(e)=\{1,3\}$ and $S^{-}(e)=\{2\}$. Along this edge, $c_{\max}=3\in S^+(e)$, so Theorem~\ref{thm:bijection} gives
$$b_S(e) = \sum_{c=1}^{3}\alpha_c^\vee(S(e))\,\lambda_c 
= 1\cdot\lambda_1 + (-1)\cdot\lambda_2 + 1\cdot\lambda_3 
= 1010.$$
Since $b_2-b_4=0-0=0$, this edge does not carry a $(2,4)$ strand.

\begin{figure}[h]
\centering
\begin{tikzpicture}[scale=.9, yscale=-1]
\draw[axis, <->] (.5,.25)--(8.5,.25);

\begin{scope}[line width=1.1pt,decoration={markings,mark=at position 0.5 with {\arrow{<}}}]
\draw[blue,postaction={decorate}] (1,.25)--(2,1);
\draw[blue,postaction={decorate}] (1.97,1)--(1.97,.25);
\draw[red,postaction={decorate}] (2.03,.25)--(2.03,1);
\draw[green!60!black, postaction={decorate}] (2.97,.25)--(3.22,1);
\draw[red, postaction={decorate}] (3.28,1)--(3.03,.25);
\draw[red, postaction={decorate}] (3.97,.25)--(3.72,1);
\draw[blue, postaction={decorate}] (3.78,1)--(4.03,.25);
\draw[blue, postaction={decorate}] (3.25,2)--(3.25,1);
\draw[red, postaction={decorate}] (3.25,2.04)--(4.5,2.54);
\draw[blue, postaction={decorate}] (4.5,2.47)--(3.25,1.97);
\draw[green!60!black, postaction={decorate}] (4.5,2.5) to[out=0, in=120] (8,.25);
\draw[green!60!black, postaction={decorate}] (3.75,1)--(4.5, 1.5);
\draw[red,postaction={decorate}] (4.5, 1.45)--(5,.95);
\draw[blue,postaction={decorate}] (5, 1.05)--(4.5,1.55);
\draw[green!60!black, postaction={decorate}] (5.03, .25)--(5.03,1);
\draw[red, postaction={decorate}] (4.97, 1)--(4.97,.25);
\draw[green!60!black, postaction={decorate}] (6.5, 1)--(6,.25);
\draw[blue, postaction={decorate}] (7, .25)--(6.5,1);
\end{scope}

\begin{scope}[decoration={markings,mark=at position 0.5 with {\arrow{<}}}]
\draw[red, line width=1.1pt, postaction={decorate}] (2,1)--(3.25,2);
\draw[blue, line width=1.1pt, postaction={decorate}] (3.25,1.07)--(3.75,1.07);
\draw[red, line width=1.1pt, postaction={decorate}] (3.75,1)--(3.25,1);
\draw[green!60!black, line width=1.1pt, postaction={decorate}] (3.25,.93)--(3.75,.93);
\draw[blue, line width=1.1pt, postaction={decorate}] (6.5,1.03)--(5,1.03);
\draw[green!60!black, line width=1.1pt, postaction={decorate}] (5,.97)--(6.5,.97);
\draw[blue, line width=1.1pt, postaction={decorate}] (4.45,1.5)--(4.45,2.5);
\draw[red, line width=1.1pt, postaction={decorate}] (4.5,2.5)--(4.5,1.5);
\draw[green!60!black, line width=1.1pt, postaction={decorate}] (4.55,1.5)--(4.55,2.5);
\end{scope}

\foreach \x in {1,...,8}
  \fill(\x,.25) circle (2.5pt);
\fill(2,1) circle (2.5pt);
\fill(3.25,1) circle (2.5pt);
\fill(3.75,1) circle (2.5pt);
\fill(5,1) circle (2.5pt);
\fill(6.5,1) circle (2.5pt);
\fill(4.5,1.5) circle (2.5pt);
\fill(3.25,2) circle (2.5pt);
\fill(4.5,2.5) circle (2.5pt);

\node at (1,-.1) {\tiny{1000}};
\node at (2,-.1) {\tiny{0100}};
\node at (3,-.1) {\tiny{0010}};
\node at (4,-.1) {\tiny{0100}};
\node at (5,-.1) {\tiny{0010}};
\node at (6,-.1) {\tiny{0001}};
\node at (7,-.1) {\tiny{1000}};
\node at (8,-.1) {\tiny{0001}};
\node at (2.5,1.75) {\tiny{1100}};
\node at (2.85,1.25) {\tiny{1000}};
\node at (3.5,.675) {\fontsize{5}{6}\selectfont 1010};
\node at (3.95,1.5) {\tiny{0001}};
\node at (3.75,2.5) {\tiny{0100}};
\node at (5.15,1.4) {\tiny{0100}};
\node at (5.6,.75) {\tiny{0110}};
\node at (4.9,2.15) {\tiny{1010}};
\end{tikzpicture}
\hspace{.15in}
\raisebox{10pt}{\begin{tikzpicture}[scale=.8, yscale=-1]
\draw[axis, <->] (.5,.25)--(8.5,.25);

\begin{scope}[line width=1.1pt,decoration={markings,mark=at position 0.5 with {\arrow{<}}}]

 \draw[orange,postaction={decorate}] (2,.25)--(2,1);

 \draw[orange, postaction={decorate}] (4,.25)--(3.75,1);

\draw[orange, postaction={decorate}] (3.25,2)--(4.5,2.5);

 \draw[orange, postaction={decorate}] (4.5,2.5) to[out=0, in=120] (8,.25);
\draw[orange, postaction={decorate}] (3.75,1)--(4.5, 1.5);
 \draw[orange,postaction={decorate}] (4.5, 1.5)--(5,1);
\draw[orange, postaction={decorate}] (6.5, 1)--(6,.25);

\end{scope}

\begin{scope}[decoration={markings,mark=at position 0.5 with {\arrow{<}}}]
 \draw[orange, line width=1.1pt, postaction={decorate}] (2,1)--(3.25,2);
 \draw[orange, line width=1.1pt, postaction={decorate}] (5,1)--(6.5,1);
\end{scope}

\foreach \x in {1,...,8}
  \fill(\x,.25) circle (2.5pt);
\fill(2,1) circle (2.5pt);
\fill(3.25,1) circle (2.5pt);
\fill(3.75,1) circle (2.5pt);
\fill(5,1) circle (2.5pt);
\fill(6.5,1) circle (2.5pt);
\fill(4.5,1.5) circle (2.5pt);
\fill(3.25,2) circle (2.5pt);
\fill(4.5,2.5) circle (2.5pt);
\end{tikzpicture}}
\caption{A stranding $S$ together with its binary labeling $b_S$ (left) and the $(2,4)$ strands for $S$ (right).}\label{fig: running stranding}
\end{figure}
\end{example}

Strandings of a fixed web can be modified locally by reversing a single strand, giving a reconfiguration process on $\mathcal{S}tr(G)$.

\begin{definition}[Strand reversal]\label{def: strand reversal}
Let $\gamma$ be an $(i,j)$-strand in a stranding $S$ of $G\in F_n(\vec{k})$, with
$1\leq i<j\leq n$. {\bf Reversing} $\gamma$ produces the stranding $S'$ whose binary
labeling $b_{S'}$ agrees with $b_S$ on every edge not in $\gamma$ and swaps the $i$th
and $j$th entries of $b_S(e)$ on every edge $e$ of $\gamma$, so that $\gamma$ is
oriented in the opposite direction in $S'$.
\end{definition}

The next lemma shows that strand reversal always yields a valid stranding.

\begin{lemma}\label{lem: strand reversal}
Let $G\in F_n(\vec{k})$, $S\in\mathcal{S}tr(G)$, and let $\gamma$ be an $(i,j)$-strand of $S$ with $1\leq i<j\leq n$. Let $S'$ be the stranding obtained by reversing $\gamma$. Then $S'\in\mathcal{S}tr(G)$.
\end{lemma}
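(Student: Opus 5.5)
The plan is to work entirely on the binary-labeling side and transfer back through the bijection of Theorem~\ref{thm:bijection}. Since $S'$ is defined through $b_{S'}$, it suffices to show that $b_{S'}$ is a valid binary labeling of $G$. Then $S'=S_{b_{S'}}$ lies in $\mathcal{S}tr(G)$.

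I would first check the weight condition on each edge. On edges outside $\gamma$ nothing changes. On an edge $e$ of $\gamma$, swapping the $i$th and $j$th entries of $b_S(e)$ is a permutation of entries, so $|b_{S'}(e)|=|b_S(e)|=\ell$. This swap genuinely changes $b_S(e)$, because $(i,j)\in L(e)$ means $b_i\neq b_j$. It therefore replaces $b_i-b_j=\pm1$ by $\mp1$, which is exactly the statement that $\gamma$ is traversed in the opposite direction.

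The main step is the vertex condition. I would use the reformulation in Definition~\ref{def: binary labeling}: at an interior vertex $v$ with edges $e_1,e_2,e_3$, validity says the vector $w=\sum_{k}\sigma_v(e_k)b(e_k)$ has all entries equal. Swapping coordinates $i$ and $j$ in all three labels applies a coordinate permutation to $w$, and a constant vector is unchanged by this. The subtlety is that $\gamma$ passes through only some of the edges at $v$, so I must control which edges are swapped.

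\begin{itemize}
\item \emph{Edges not on $L_{(i,j)}$.} For an edge $e$ at $v$ that carries no $(i,j)$-strand, $b_i(e)=b_j(e)$, so swapping its $i$th and $j$th entries changes nothing. Hence swapping coordinates on every edge of $L_{(i,j)}(S)$ at $v$ is the same as swapping on all three edges at $v$, and the vertex condition is preserved.
\item \emph{Locality of $\gamma$.} The remaining issue is that $\gamma$ is a single component of $L_{(i,j)}(S)$. I need that at each interior vertex $v$, either every $(i,j)$-edge at $v$ belongs to $\gamma$ or none does. Among the three edges at $v$, the number carrying an $(i,j)$-strand is $0$ or $2$. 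This follows from comparing the $i$th and $j$th entries of $w$: the quantity $\sum_k\sigma_v(e_k)(b_i(e_k)-b_j(e_k))=0$ is a sum of values in $\{0,\pm1\}$, so it cannot have exactly one or three nonzero terms. This is essentially \cite[Lemma~25]{RTStranding}. With two such edges, both lie in the same component $\gamma$ or neither does.
\end{itemize}

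So at each vertex, $b_{S'}$ either equals $b_S$ on all three edges or equals $b_S$ with coordinates $i,j$ swapped on all three. In both cases the vertex condition holds. Boundary vertices impose no validity condition, so $b_{S'}\in\mathcal{B}in(G)$ and $S'\in\mathcal{S}tr(G)$.

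The obstacle I expect is the locality claim in the second bullet, in particular justifying that $\gamma$ passes through the two $(i,j)$-edges at $v$ in compatible directions. I do not actually need this to conclude, since the permutation argument works regardless of orientation. What the argument does need is that the number of $(i,j)$-edges at each vertex is $0$ or $2$, and that the components of $L_{(i,j)}(S)$ are built by joining these pairs at each vertex, as in \cite[Lemma~25]{RTStranding}.
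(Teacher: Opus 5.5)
Your proof is correct and has the same structure as the paper's. Weights are preserved because only unequal entries are swapped, and at each interior vertex either no edge or exactly two edges lie on $\gamma$; you derive the latter directly from $w_i=w_j$ by parity, where the paper takes it from the structure of $(i,j)$-strands in \cite[Lemma~25]{RTStranding}. The only real difference is the vertex check: the paper uses that $\gamma$ enters on one edge and exits on the other, so $\sigma_v(e_1)b(e_1)+\sigma_v(e_2)b(e_2)$ is unchanged, whereas you extend the swap harmlessly to the third edge and note that transposing coordinates $i,j$ fixes the constant vector $w$, which avoids tracking the orientation of $\gamma$ at $v$.
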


\begin{proof}
By Definition~\ref{def: ij-strands}, each edge $e\in\gamma$ has $b_S(e)_i\neq b_S(e)_j$, so exactly one of $b_S(e)_i$ and $b_S(e)_j$ is nonzero and $|b_{S'}(e)|=|b_S(e)|$ for all edges $e$. Therefore $b_{S'}$ is a binary labeling of $G$.

We verify validity at each interior vertex $v$ with incident edges $e_1,e_2,e_3$. If none of these edges is part of $\gamma$ then $b_{S'}$ agrees with $b_S$ at $v$ and there is nothing to show. Otherwise exactly two edges lie on $\gamma$; say $\gamma$ enters $v$ along $e_1$ and exits along $e_2$, so that
$$
\sigma_v(e_1)(b_S(e_1)_i - b_S(e_1)_j) = 1
\qquad\text{and}\qquad
\sigma_v(e_2)(b_S(e_2)_i - b_S(e_2)_j) = -1.
$$

Together with the definition of $b_{S'}$, these give
$$
\sigma_v(e_1)b_{S'}(e_1) + \sigma_v(e_2)b_{S'}(e_2)
= \sigma_v(e_1)b_S(e_1) + \sigma_v(e_2)b_S(e_2),
$$
so $\sum_{t=1}^3\sigma_v(e_t)\alpha_c^{\vee}(b_{S'}(e_t))
= \sum_{t=1}^3\sigma_v(e_t)\alpha_c^{\vee}(b_S(e_t)) = 0$ for all
$1\leq c < n$. Hence $b_{S'}$ is valid and $S'\in\mathcal{S}tr(G)$.
\end{proof}

\begin{example}\label{ex: strand reversal}
Figure~\ref{fig: strand reversal example} illustrates
Lemma~\ref{lem: strand reversal} on the stranding $S$ from
Figure~\ref{fig: running stranding}. The counterclockwise open color-$2$ simple strand with endpoints
$v_3$ and $v_4$ is highlighted. Reversing this strand produces the stranding $S'$ on the
right. 

\begin{center}
\begin{tikzpicture}[scale=.9, yscale=-1]
\draw[yellow!30, line width=10pt, line cap=round, line join=round]
  (3,.25) -- (3.25,1) -- (3.75,1) -- (4,.25);
\draw[axis, <->] (.5,.25)--(8.5,.25);

\begin{scope}[line width=1.1pt,decoration={markings,mark=at position 0.5 with {\arrow{<}}}]
\draw[blue,postaction={decorate}] (1,.25)--(2,1);
\draw[blue,postaction={decorate}] (1.97,1)--(1.97,.25);
\draw[red,postaction={decorate}] (2.03,.25)--(2.03,1);
\draw[green!60!black, postaction={decorate}] (2.97,.25)--(3.22,1);
\draw[red, postaction={decorate}] (3.28,1)--(3.03,.25);
\draw[red, postaction={decorate}] (3.97,.25)--(3.72,1);
\draw[blue, postaction={decorate}] (3.78,1)--(4.03,.25);
\draw[blue, postaction={decorate}] (3.25,2)--(3.25,1);
\draw[red, postaction={decorate}] (3.25,2.04)--(4.5,2.54);
\draw[blue, postaction={decorate}] (4.5,2.47)--(3.25,1.97);
\draw[green!60!black, postaction={decorate}] (4.5,2.5) to[out=0, in=120] (8,.25);
\draw[green!60!black, postaction={decorate}] (3.75,1)--(4.5, 1.5);
\draw[red,postaction={decorate}] (4.5, 1.45)--(5,.95);
\draw[blue,postaction={decorate}] (5, 1.05)--(4.5,1.55);
\draw[green!60!black, postaction={decorate}] (5.03, .25)--(5.03,1);
\draw[red, postaction={decorate}] (4.97, 1)--(4.97,.25);
\draw[green!60!black, postaction={decorate}] (6.5, 1)--(6,.25);
\draw[blue, postaction={decorate}] (7, .25)--(6.5,1);
\end{scope}

\begin{scope}[decoration={markings,mark=at position 0.5 with {\arrow{<}}}]
\draw[red, line width=1.1pt, postaction={decorate}] (2,1)--(3.25,2);
\draw[blue, line width=1.1pt, postaction={decorate}] (3.25,1.07)--(3.75,1.07);
\draw[red, line width=1.1pt, postaction={decorate}] (3.75,1)--(3.25,1);
\draw[green!60!black, line width=1.1pt, postaction={decorate}] (3.25,.93)--(3.75,.93);
\draw[blue, line width=1.1pt, postaction={decorate}] (6.5,1.03)--(5,1.03);
\draw[green!60!black, line width=1.1pt, postaction={decorate}] (5,.97)--(6.5,.97);
\draw[blue, line width=1.1pt, postaction={decorate}] (4.45,1.5)--(4.45,2.5);
\draw[red, line width=1.1pt, postaction={decorate}] (4.5,2.5)--(4.5,1.5);
\draw[green!60!black, line width=1.1pt, postaction={decorate}] (4.55,1.5)--(4.55,2.5);
\end{scope}

\foreach \x in {1,...,8}
  \fill(\x,.25) circle (2.5pt);
\fill(2,1) circle (2.5pt);
\fill(3.25,1) circle (2.5pt);
\fill(3.75,1) circle (2.5pt);
\fill(5,1) circle (2.5pt);
\fill(6.5,1) circle (2.5pt);
\fill(4.5,1.5) circle (2.5pt);
\fill(3.25,2) circle (2.5pt);
\fill(4.5,2.5) circle (2.5pt);

\node at (1,-.1) {\tiny{1000}};
\node at (2,-.1) {\tiny{0100}};
\node at (3,-.1) {\tiny{0010}};
\node at (4,-.1) {\tiny{0100}};
\node at (5,-.1) {\tiny{0010}};
\node at (6,-.1) {\tiny{0001}};
\node at (7,-.1) {\tiny{1000}};
\node at (8,-.1) {\tiny{0001}};
\node at (2.5,1.75) {\tiny{1100}};
\node at (2.9,1.25) {\tiny{1000}};
\node at (3.5,.75) {\tiny{1010}};
\node at (4,1.5) {\tiny{0001}};
\node at (3.75,2.5) {\tiny{0100}};
\node at (5.1,1.4) {\tiny{0100}};
\node at (5.6,.75) {\tiny{0110}};
\node at (4.85,2.15) {\tiny{1010}};
\end{tikzpicture}
\hspace{.15in}
\begin{tikzpicture}[scale=.9, yscale=-1]
\draw[axis, <->] (.5,.25)--(8.5,.25);

\begin{scope}[line width=1.1pt,decoration={markings,mark=at position 0.5 with {\arrow{<}}}]
\draw[blue,postaction={decorate}] (1,.25)--(2,1);
\draw[blue,postaction={decorate}] (1.97,1)--(1.97,.25);
\draw[red,postaction={decorate}] (2.03,.25)--(2.03,1);
\draw[red, postaction={decorate}] (2.97,.25)--(3.22,1);
\draw[blue, postaction={decorate}] (3.28,1)--(3.03,.25);
\draw[green!60!black, postaction={decorate}] (3.97,.25)--(3.72,1);
\draw[red, postaction={decorate}] (3.78,1)--(4.03,.25);
\draw[blue, postaction={decorate}] (3.25,2)--(3.25,1);
\draw[red, postaction={decorate}] (3.25,2.04)--(4.5,2.54);
\draw[blue, postaction={decorate}] (4.5,2.47)--(3.25,1.97);
\draw[green!60!black, postaction={decorate}] (4.5,2.5) to[out=0, in=120] (8,.25);
\draw[green!60!black, postaction={decorate}] (3.75,1)--(4.5, 1.5);
\draw[red,postaction={decorate}] (4.5, 1.45)--(5,.95);
\draw[blue,postaction={decorate}] (5, 1.05)--(4.5,1.55);
\draw[green!60!black, postaction={decorate}] (5.03, .25)--(5.03,1);
\draw[red, postaction={decorate}] (4.97, 1)--(4.97,.25);
\draw[green!60!black, postaction={decorate}] (6.5, 1)--(6,.25);
\draw[blue, postaction={decorate}] (7, .25)--(6.5,1);
\end{scope}

\begin{scope}[decoration={markings,mark=at position 0.5 with {\arrow{<}}}]
\draw[red, line width=1.1pt, postaction={decorate}] (2,1)--(3.25,2);
\draw[red, line width=1.1pt, postaction={decorate}] (3.25,1)--(3.75,1);
\draw[blue, line width=1.1pt, postaction={decorate}] (6.5,1.03)--(5,1.03);
\draw[green!60!black, line width=1.1pt, postaction={decorate}] (5,.97)--(6.5,.97);
\draw[blue, line width=1.1pt, postaction={decorate}] (4.45,1.5)--(4.45,2.5);
\draw[red, line width=1.1pt, postaction={decorate}] (4.5,2.5)--(4.5,1.5);
\draw[green!60!black, line width=1.1pt, postaction={decorate}] (4.55,1.5)--(4.55,2.5);
\end{scope}

\foreach \x in {1,...,8}
  \fill(\x,.25) circle (2.5pt);
\fill(2,1) circle (2.5pt);
\fill(3.25,1) circle (2.5pt);
\fill(3.75,1) circle (2.5pt);
\fill(5,1) circle (2.5pt);
\fill(6.5,1) circle (2.5pt);
\fill(4.5,1.5) circle (2.5pt);
\fill(3.25,2) circle (2.5pt);
\fill(4.5,2.5) circle (2.5pt);

\node at (1,-.1) {\tiny{1000}};
\node at (2,-.1) {\tiny{0100}};
\node at (3,-.1) {\tiny{0100}};
\node at (4,-.1) {\tiny{0010}};
\node at (5,-.1) {\tiny{0010}};
\node at (6,-.1) {\tiny{0001}};
\node at (7,-.1) {\tiny{1000}};
\node at (8,-.1) {\tiny{0001}};
\node at (2.5,1.75) {\tiny{1100}};
\node at (2.9,1.25) {\tiny{1000}};
\node at (3.5,.75) {\tiny{1100}};
\node at (4,1.5) {\tiny{0001}};
\node at (3.75,2.5) {\tiny{0100}};
\node at (5.1,1.4) {\tiny{0100}};
\node at (5.6,.75) {\tiny{0110}};
\node at (4.85,2.15) {\tiny{1010}};
\end{tikzpicture}
\captionof{figure}{Reversing the highlighted counterclockwise open strand
of stranding $S$ (left) to obtain $S'$ (right).}
\label{fig: strand reversal example}
\end{center}
\end{example}

\subsection{Depth}\label{subsection: depth}

In this subsection we introduce two notions of depth for half-plane graphs and study how they interact. The first, integral depth, is a purely combinatorial measure of distance from the boundary. The second, depth with respect to a curve system, measures how many oriented curves enclose a given face. One of our main applications is to webs and their strandings, where curve depth specializes to a notion we call strand depth; later, we also use this same framework for the colored noncrossing matchings of Section~\ref{section: matchings}. 

\begin{definition}[Dual graph and integral depth]\label{def: dual graph}
Let $G$ be a half-plane graph.
\begin{itemize}
    \item The \textbf{dual graph} $G^*$ is the undirected graph with one vertex for each face of $G$ and one edge $e^*$ for each edge $e$ of $G$, connecting the two faces on either side of $e$.
    \item We describe a \textbf{path from $U$ to $F$ in $G^*$} as a sequence of faces $U=F_0,F_1,\ldots,F_d=F$ of $G$ in which consecutive faces $F_{i-1},F_i$ share an edge $e_i$ of $G$.
    \item For a face $F$ of $G$, the \textbf{integral depth} of $F$, denoted $d(F)$, is the length of a shortest path from $U$ to $F$ in $G^*$.
\end{itemize}
\end{definition}

Given a curve system on a half-plane graph, we define curve depth of a face as follows.

\begin{definition}[Curve depth and strand depth]\label{def: curve depth}
Let $G$ be a half-plane graph, and let $F$ be a face of $G$.
\begin{itemize}
\item Given a curve system $C$ on $G$, the \textbf{depth of $F$ with respect to $C$}, denoted $d_{C}(F)$, is the number of clockwise curves of $C$ enclosing $F$ minus the number of counterclockwise curves of $C$ enclosing $F$.
\item If $G$ is a web and $S\in \mathcal{S}tr(G)$, we call the curve depth coming from the system of simple strands of $S$ the \textbf{strand depth of $F$ with respect to $S$}, denoted $d_S(F)$.
\end{itemize}
\end{definition}

We are interested in the interaction between curve depth and integral depth for balanced curve systems.

\begin{definition}[Balanced curve systems]\label{def: balanced}
A curve system $C$ on a half-plane graph $G$ is {\bf balanced} if for every edge $e$ of $G$ with endpoints $u$ and $v$, the number of curves of $C$ oriented from $u$ to $v$ along $e$ differs by at most $1$ from the number oriented from $v$ to $u$ along $e$.
\end{definition}

The alternating-sign condition of Definition~\ref{def: stranding} forces the numbers of strands directed with and against each edge to differ by at most one. Hence any system of simple strands arising from a valid stranding on a web is balanced.

\begin{lemma}\label{lem: depth bound}
Let $C$ be a balanced curve system on $G$, and let $F$ be a face of $G$. Then $|d_{C}(F)| \le d(F)$.
\end{lemma}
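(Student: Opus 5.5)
The plan is to prove that crossing a single edge changes curve depth by at most one, and then to walk along a shortest dual path from $U$ to $F$.

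Fix a shortest path $U=F_0,F_1,\ldots,F_d=F$ in $G^*$, where consecutive faces $F_{i-1},F_i$ share the edge $e_i$. Since no curve encloses $U$, we have $d_C(U)=0$. It therefore suffices to show that $|d_C(F_i)-d_C(F_{i-1})|\le 1$ for each $i$. Summing these $d$ increments then gives $|d_C(F)|\le d=d(F)$.

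To compare the depths of two faces $F'$ and $F''$ on either side of an edge $e$, consider each curve $\gamma$ of $C$.

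\begin{itemize}
\item If $\gamma$ does not traverse $e$, then $F'$ and $F''$ lie in the same component of the complement of $\gamma$ in the half-plane. The reason is that a small segment crossing $e$ transversally avoids $\gamma$, since curves run along edges of $G$. Hence $\gamma$ encloses both faces or neither, and it contributes equally to both depths.
\item If $\gamma$ traverses $e$, then $\gamma$ separates the two sides of $e$ locally. For a loop this is the Jordan curve theorem. For an arc, the arc together with the boundary axis cuts the closed half-plane into a bounded region and the unbounded one, and again the two sides of $e$ land in different components. So exactly one of $F'$ and $F''$ is enclosed by $\gamma$.
\end{itemize}

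The key point is to identify which side is enclosed in terms of the direction of traversal. For a clockwise curve, meaning a clockwise loop, or an arc whose terminal vertex lies to the left of its initial vertex, the enclosed region lies to the right of the direction of travel. This is because the arc closed up by the boundary segment runs clockwise, so the interior is on the right. For a counterclockwise curve the enclosed region lies on the left. In either case, the contribution of $\gamma$ to $d_C(F_{\mathrm{right}})-d_C(F_{\mathrm{left}})$, with left and right taken relative to a fixed orientation of $e$ from $u$ to $v$, is as follows:
\begin{itemize}
\item $+1$ if $\gamma$ travels from $u$ to $v$ (a clockwise $\gamma$ adds $+1$ to the right face; a counterclockwise $\gamma$ adds $-1$ to the left face);
\item $-1$ if $\gamma$ travels from $v$ to $u$.
\end{itemize}

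Hence $d_C(F_{\mathrm{right}})-d_C(F_{\mathrm{left}})$ equals the number of curves oriented from $u$ to $v$ minus the number oriented from $v$ to $u$. By the balanced hypothesis (Definition~\ref{def: balanced}), this difference has absolute value at most $1$.

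The main obstacle is making the orientation and enclosure bookkeeping rigorous for arcs. This requires checking that "clockwise" as defined by endpoint positions matches the enclosed region being on the right. The approach is to close each arc with the boundary segment between its endpoints, which gives a simple closed curve since the arc meets the axis only at its endpoints, and then apply the Jordan curve theorem to that closed curve. One must also handle the degenerate case where $e$ is itself a boundary edge or where $F'=F''$. In the latter case, if a curve used $e$, the same face would be both enclosed and not enclosed, so no curve can traverse $e$ and the depth difference is $0$.
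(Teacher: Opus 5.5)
Your proposal is correct and follows the paper's proof essentially step for step: walk a shortest dual path from $U$ to $F$, and note that each curve running along $e_i$ contributes $\pm 1$ to $d_C(F_i)-d_C(F_{i-1})$ according to which side of it $F_{i-1}$ lies on (clockwise curves enclose their right side, counterclockwise curves their left). Then use balance to bound each increment by $1$ and telescope from $d_C(U)=0$; the extra justifications you give (curves not traversing $e_i$ contribute equally to both faces, and closing an arc with a boundary segment to invoke the Jordan curve theorem) are points the paper leaves implicit, so they only tighten the same argument.
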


\begin{proof}
Let $U = F_0, F_1, \ldots, F_d = F$ be a path of length $d = d(F)$ from $U$ to $F$, crossing edges $e_1,\ldots,e_d$ where $e_i$ separates $F_{i-1}$ and $F_i$. Fix $i$ and some curve $\gamma$ of $C$ that runs along $e_i$. Since $\gamma$ is oriented, $F_{i-1}$ is either to the left or right of $\gamma$ at $e_i$. If it is to the left, either $\gamma$ is clockwise and encloses $F_{i}$ but not $F_{i-1}$ or $\gamma$ is counterclockwise and encloses $F_{i-1}$ but not $F_{i}$. In either case, $\gamma$ contributes $+1$ to $d_C(F_i)-d_C(F_{i-1})$. Similarly, if $F_{i-1}$ is to the right of $\gamma$, then $\gamma$ contributes $-1$ to $d_C(F_i)-d_C(F_{i-1})$. It follows that $d_C(F_i)-d_C(F_{i-1})$ is the difference between the number of curves of $C$ along $e_i$ for which $F_{i-1}$ is on the left and the number for which $F_{i-1}$ is on the right. Since $C$ is a balanced curve system, this implies $|d_{C}(F_i) - d_{C}(F_{i-1})| \le 1$ for each $i$. Because $d_{C}(U) = 0$, we have $$|d_C(F)|\leq \sum_{i=1}^d |d_C(F_i)-d_C(F_{i-1})|\leq d=d(F).$$
\end{proof}

\begin{example}\label{ex: depth running example}
Figure~\ref{fig: depth running example} shows integral depth in the  web from Figure~\ref{fig: running web} (left) alongside strand depth with respect to the stranding from Figure~\ref{fig: running stranding} (right), illustrating Lemma~\ref{lem: depth bound}.
\begin{figure}[h]
\centering
\begin{tikzpicture}[scale=.75, yscale=-1]
\draw[style=dashed, <->] (.5,.25)--(8.5,.25);

\begin{scope}[thick,decoration={markings,mark=at position 0.5 with {\arrow{<}}}]
\draw[postaction={decorate}] (1,.25)--(2,1);
\draw[postaction={decorate}] (2,.25)--(2,1);
\draw[postaction={decorate}] (3,.25)--(3.25,1);
\draw[postaction={decorate}] (4,.25)--(3.75,1);
\draw[postaction={decorate}] (3.25,2)--(3.25,1);
\draw[postaction={decorate}] (3.25,2)--(4.5,2.5);
\draw[postaction={decorate}] (8,.25) to[out=120, in=0] (4.5,2.5);
\draw[postaction={decorate}] (4.5,1.5)--(3.75,1);
\draw[postaction={decorate}] (4.5,1.5)--(5,1);
\draw[postaction={decorate}] (5,.25)--(5,1);
\draw[postaction={decorate}] (6,.25)--(6.5,1);
\draw[postaction={decorate}] (7,.25)--(6.5,1);
\end{scope}

\begin{scope}[decoration={markings,mark=at position 0.5 with {\arrow{<}}}]
\draw[double, postaction={decorate}] (2,1)--(3.25,2);
\draw[double, postaction={decorate}] (3.25,1)--(3.75,1);
\draw[double, postaction={decorate}] (5,1)--(6.5,1);
\draw[double, postaction={decorate}] (4.5,1.5)--(4.5,2.5);
\end{scope}

\foreach \x in {1,...,8}
  \draw[radius=.08, fill=black](\x,.25)circle;
\draw[radius=.08, fill=black](2,1)circle;
\draw[radius=.08, fill=black](3.25,1)circle;
\draw[radius=.08, fill=black](3.75,1)circle;
\draw[radius=.08, fill=black](5,1)circle;
\draw[radius=.08, fill=black](6.5,1)circle;
\draw[radius=.08, fill=black](4.5,1.5)circle;
\draw[radius=.08, fill=black](3.25,2)circle;
\draw[radius=.08, fill=black](4.5,2.5)circle;
\node[depthcolor] at (1.75,.5) {\tiny{$1$}};
\node[depthcolor] at (2.5,1) {\tiny{$1$}};
\node[depthcolor] at (3.5,.4) {\tiny{$2$}};
\node[depthcolor] at (4.5,.8) {\tiny{$2$}};
\node[depthcolor] at (3.75,1.75) {\tiny{$1$}};
\node[depthcolor] at (5.5,2) {\tiny{$1$}};
\node[depthcolor] at (5.65,.5) {\tiny{$2$}};
\node[depthcolor] at (6.5,.55) {\tiny{$2$}};
\end{tikzpicture}
\hspace{.5in}
\begin{tikzpicture}[scale=.75, yscale=-1]
\draw[style=dashed, <->] (.5,.25)--(8.5,.25);
\begin{scope}[thick,decoration={markings,mark=at position 0.5 with {\arrow{<}}}]
\draw[blue,postaction={decorate}] (1,.25)--(2,1);
\draw[blue,postaction={decorate}] (1.97,1)--(1.97,.25);
\draw[red,postaction={decorate}] (2.03,.25)--(2.03,1);
\draw[green!60!black, postaction={decorate}] (2.97,.25)--(3.22,1);
\draw[red, postaction={decorate}] (3.28,1)--(3.03,.25);
\draw[red, postaction={decorate}] (3.97,.25)--(3.72,1);
\draw[blue, postaction={decorate}] (3.78,1)--(4.03,.25);
\draw[blue, postaction={decorate}] (3.25,2)--(3.25,1);
\draw[red, postaction={decorate}] (3.25,2.04)--(4.5,2.54);
\draw[blue, postaction={decorate}] (4.5,2.47)--(3.25,1.97);
\draw[green!60!black, postaction={decorate}] (4.5,2.5) to[out=0, in=120] (8,.25);
\draw[green!60!black, postaction={decorate}] (3.75,1)--(4.5, 1.5);
\draw[red,postaction={decorate}] (4.5, 1.45)--(5,.95);
\draw[blue,postaction={decorate}] (5, 1.05)--(4.5,1.55);
\draw[green!60!black, postaction={decorate}] (5.03, .25)--(5.03,1);
\draw[red, postaction={decorate}] (4.97, 1)--(4.97,.25);
\draw[green!60!black, postaction={decorate}] (6.5, 1)--(6,.25);
\draw[blue, postaction={decorate}] (7, .25)--(6.5,1);
\end{scope}
\begin{scope}[decoration={markings,mark=at position 0.5 with {\arrow{<}}}]
\draw[red, thick, postaction={decorate}] (2,1)--(3.25,2);
\draw[blue, thick, postaction={decorate}] (3.25,1.07)--(3.75,1.07);
\draw[red, thick, postaction={decorate}] (3.75,1)--(3.25,1);
\draw[green!60!black, thick, postaction={decorate}] (3.25,.93)--(3.75,.93);
\draw[blue, thick, postaction={decorate}] (6.5,1.03)--(5,1.03);
\draw[green!60!black, thick, postaction={decorate}] (5,.97)--(6.5,.97);
\draw[blue, thick, postaction={decorate}] (4.45,1.5)--(4.45,2.5);
\draw[red, thick, postaction={decorate}] (4.5,2.5)--(4.5,1.5);
\draw[green!60!black, thick, postaction={decorate}] (4.55,1.5)--(4.55,2.5);
\end{scope}
\foreach \x in {1,...,8}
  \draw[radius=.08, fill=black](\x,.25)circle;
\draw[radius=.08, fill=black](2,1)circle;
\draw[radius=.08, fill=black](3.25,1)circle;
\draw[radius=.08, fill=black](3.75,1)circle;
\draw[radius=.08, fill=black](5,1)circle;
\draw[radius=.08, fill=black](6.5,1)circle;
\draw[radius=.08, fill=black](4.5,1.5)circle;
\draw[radius=.08, fill=black](3.25,2)circle;
\draw[radius=.08, fill=black](4.5,2.5)circle;
\node[depthcolor] at (1.75,.5) {\tiny{$1$}};
\node[depthcolor] at (2.5,1) {\tiny{$1$}};
\node[depthcolor] at (3.5,.4) {\tiny{$1$}};
\node[depthcolor] at (4.5,.8) {\tiny{$1$}};
\node[depthcolor] at (3.75,1.75) {\tiny{$0$}};
\node[depthcolor] at (5.5,2) {\tiny{$1$}};
\node[depthcolor] at (5.65,.5) {\tiny{$1$}};
\node[depthcolor] at (6.5,.55) {\tiny{$0$}};
\end{tikzpicture}
\caption{Integral depth (left) versus strand depth (right) in a web.}\label{fig: depth running example}
\end{figure}
\end{example}

\section{Web vectors and web bases}\label{section: web vectors}

\subsection{The invariant space}\label{subsection: invariants}

Denote by $\uq$ the quantum universal enveloping algebra of the Lie algebra $\mathfrak{sl}_n$. Let $\mathbb{C}^n_q$ be the $n$-dimensional vector space over $\mathbb{C}(q)$ with standard basis $x_1,\ldots,x_n$. For $1\le k<n$, the $k$th fundamental representation of $\uq$ is the quantum exterior power

$$
V_k:={\bigwedge}_q^k \mathbb{C}^n_q.
$$

For a binary vector $\vec{b} = b_1 \cdots b_n \in \{0,1\}^n$ with $|\vec{b}| = k$, we define
$$
x_{\vec{b}} := x_{j_1} \wedge_q \cdots \wedge_q x_{j_k},
$$
where $\{j_1 < \cdots < j_k\} = \{i : b_i = 1\}$. For example, under this convention, $x_{1010} = x_1 \wedge_q x_3 \in V_2$. The collection $\{x_{\vec{b}} : |\vec{b}| = k\}$ forms a basis of $V_k$. We call each $x_{\vec{b}}$ a monomial. 

For $\vec{k}=(k_1,\ldots,k_m)$ such that $1\leq k_i < n$ for each $i$, write
$$
V_n(\vec{k}):=V_{k_1}\otimes\cdots\otimes V_{k_m}.
$$ The tensor products $x_{\vec{b}_1}\otimes \cdots \otimes x_{\vec{b}_m}$ with $|\vec{b}_i|=k_i$ form a basis for $V_n(\vec{k})$. We also refer to these as monomials.

A vector $w \in V_n(\vec{k})$ is \emph{$\uq$-invariant} if $\uq$ acts trivially on $w$; write $\textup{Inv}_n(\vec{k})$ for the subspace of $\uq$-invariants in $V_n(\vec{k})$. Webs serve as a complete diagrammatic model for $\textup{Inv}_n(\vec{k})$, and every construction, example, and proof in this paper proceeds at the level of webs and strandings rather than the $\uq$-action, whose explicit description can be found in \cite[Section~3.4]{RTStranding}.

The dimension of the invariant space has a clean combinatorial description in
terms of the following tableaux.

\begin{definition}[Row-strict tableaux]\label{def: RST}
Fix an integer $n\geq 2$ and a tuple $\vec{k}=(k_1,\ldots,k_m)$ with
$1\leq k_j\leq n-1$. When $n\mid\sum_j k_j$, a {\bf row-strict Young tableau}
of type $\vec{k}$ is a filling of the $n$-row rectangular Young diagram of content
$\{1^{k_1},\ldots,m^{k_m}\}$, whose entries strictly increase from left to
right along each row and weakly increase from top to bottom down each column.
Write $\mathcal{RST}_n(\vec{k})$ for the set of such tableaux. When
$n\nmid\sum_j k_j$, we set $\mathcal{RST}_n(\vec{k})=\emptyset$ by convention.
\end{definition}

\begin{theorem}[{\cite{MR1321638}}]\label{thm: invariant space dimension}
The invariant space $\textup{Inv}_n(\vec{k})$ has dimension
$|\mathcal{RST}_n(\vec{k})|$.
\end{theorem}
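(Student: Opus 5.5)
The plan is to reduce the statement to a classical multiplicity count and then read that count off with the Pieri rule. Throughout I take $q$ generic, so $\uq$-modules are semisimple and the finite-dimensional type~$1$ representation theory matches that of $\mathfrak{sl}_n$. Characters and tensor product multiplicities are then the same as in the classical case, and it suffices to compute the multiplicity of the trivial module in $V_{k_1}\otimes\cdots\otimes V_{k_m}$ with $V_k=\bigwedge^k\mathbb{C}^n$.

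\emph{Lifting to $\mathfrak{gl}_n$.} Lift everything to $\mathfrak{gl}_n$, where $\bigwedge^k\mathbb{C}^n$ is the irreducible module with highest weight the column partition $(1^k)$. Irreducible polynomial $\mathfrak{gl}_n$-modules are indexed by partitions with at most $n$ rows. Such a module restricts to the trivial $\mathfrak{sl}_n$-module exactly when the partition is a rectangle with $n$ rows, that is, a power of the determinant. The tensor product is homogeneous of degree $\sum_j k_j$, so every constituent partition has $\sum_j k_j$ boxes. An $n$-row rectangle of that size exists only when $n\mid\sum_j k_j$. So if $n\nmid\sum_j k_j$ the invariant space is $0$, matching the convention $\mathcal{RST}_n(\vec{k})=\emptyset$. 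Otherwise the dimension of $\textup{Inv}_n(\vec{k})$ equals the multiplicity of the rectangle $R=(r^n)$, where $r=\frac1n\sum_j k_j$.

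\emph{Pieri count.} Apply the dual Pieri rule $m$ times. Tensoring the module for $\mu$ with $\bigwedge^k$ gives the multiplicity-free sum over all $\lambda\supset\mu$ with at most $n$ rows such that $\lambda/\mu$ is a vertical strip of $k$ boxes, meaning no two boxes lie in the same row. Iterating, the multiplicity of $R$ is the number of chains
\[
\emptyset=\lambda^{(0)}\subset\lambda^{(1)}\subset\cdots\subset\lambda^{(m)}=R
\]
in which each $\lambda^{(j)}/\lambda^{(j-1)}$ is a vertical strip of size $k_j$.

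\emph{Bijection with tableaux.} Encode a chain as a filling of $R$ by writing $j$ in every box of $\lambda^{(j)}/\lambda^{(j-1)}$. The content is then $\{1^{k_1},\ldots,m^{k_m}\}$. Since each $\lambda^{(j)}$ is a partition, entries weakly increase along rows and down columns. The vertical-strip condition forbids two equal entries in a row, so rows are strictly increasing. Hence the filling is row-strict in the sense of Definition~\ref{def: RST}. Conversely, a row-strict filling of $R$ with this content yields the chain $\lambda^{(j)}=\{\text{boxes with entry}\le j\}$. Each $\lambda^{(j)}$ is a partition by the monotonicity of rows and columns, and each successive difference is a vertical strip by row-strictness. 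This gives a bijection, and the count equals $|\mathcal{RST}_n(\vec{k})|$.

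\emph{Main obstacle.} The combinatorial part is routine. The main obstacle is the first step, namely justifying that the quantum invariant space has the same dimension as the classical one. I would handle it by citing semisimplicity at generic $q$ and the fact that the quantum exterior powers $V_k$ have the same characters as their classical counterparts. Tensor product decompositions are determined by characters, so the multiplicities agree.
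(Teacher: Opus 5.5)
Your proof is correct. Granting the standard facts you invoke (complete reducibility of finite-dimensional type~1 $\uq$-modules, and equality of quantum and classical characters, so the invariants form the trivial isotypic component with the classical multiplicity), each step checks out: the reduction to the multiplicity of $(r^n)$, the iterated dual Pieri rule, and the bijection between chains of vertical strips and row-strict fillings. Genericity of $q$ is not an extra assumption here, since the paper works over $\mathbb{C}(q)$.

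The paper itself gives no argument and simply quotes the result. In the literature it comes from skew $(\mathrm{GL}_n,\mathrm{GL}_m)$ Howe duality. One writes $\bigwedge(\mathbb{C}^n\otimes\mathbb{C}^m)\cong\bigoplus_\lambda V^{\mathrm{GL}_n}_\lambda\otimes V^{\mathrm{GL}_m}_{\lambda'}$ and observes that $V_n(\vec k)$ is the $\vec k$-weight space for $\mathrm{GL}_m$. Hence $\textup{Inv}_n(\vec k)$ is the $\vec k$-weight space of the single irreducible $\mathrm{GL}_m$-module of highest weight $(n^r)$. Its dimension is the Kostka number counting semistandard tableaux of shape $(n^r)$ and content $\vec k$, and these are exactly the transposes of row-strict tableaux.

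That route identifies the invariant space itself, not just its dimension, with a weight space carrying a $\mathfrak{gl}_m$-action. This is the structural input for skew-Howe approaches to webs, and it has a quantum analogue phrased directly with quantum exterior algebras. Your route is more elementary and fits the paper's combinatorics closely. The row lengths of your $\lambda^{(j)}$ are exactly the partial counts $\rho_c(j)$ in the proof of Theorem~\ref{thm: leading term row strict}. So that proof is precisely a check that the boundary data of a leading term stranding traces out one of your chains of vertical strips from $\emptyset$ to $R$.
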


\subsection{Web vectors}\label{subsection: web vector}

Corresponding to each web $G\in F_n(\vec{k})$, we construct an invariant vector $f^{\uparrow}(G)$ via a state sum over the strandings of $G$.

\begin{definition}[Web Vectors]\label{def:vector}
Let $\vec{k}=(k_1,\ldots, k_m)$ and $G\in F_n(\vec{k})$ with boundary vertices $v_1,\ldots,v_m$ and incident boundary edges $e_1,\ldots,e_m$. 
\begin{itemize}
    \item Let $S\in\mathcal{S}tr(G)$. 
    \begin{itemize}
        \item For each boundary edge $e_j$, set
$$
\hat{b}_S(e_j)=\begin{cases}
b_S(e_j) & \textup{if } \sigma_{v_j}(e_j)=+1,\\
\vec{1}-b_S(e_j) & \textup{if } \sigma_{v_j}(e_j)=-1,
\end{cases}
$$
so that $|\hat{b}_S(e_j)|=k_j$. The {\bf strand monomial} of $S$ is
$$
x_S := x_{\hat{b}_S(e_1)}\otimes\cdots\otimes x_{\hat{b}_S(e_m)}\in V_n(\vec{k}).
$$
\item Let $\mathrm{cw}(S)$ be the number of closed clockwise $(i,j)$-strands of $S$ and $\mathrm{ccw}(S)$ be the number of counterclockwise $(i,j)$-strands of $S$, open or closed, where both counts range over all pairs $1\leq i<j\leq n$. Note that open clockwise
strands contribute to neither count. (This convention follows \cite[Section~5]{RTStranding}.)
We call $\mathrm{cw}(S)-\mathrm{ccw}(S)$ the {\bf strand exponent} of $S$.
\end{itemize}
\item The {\bf web vector} of $G$ is
\begin{equation*}
f^{\uparrow}(G) := \sum_{S\in\mathcal{S}tr(G)} (-q)^{\mathrm{cw}(S)-\mathrm{ccw}(S)}\,x_S.
\end{equation*}
\end{itemize}
\end{definition}

\begin{remark}
Our choice to sort wedge product factors of $x_{\vec{b}}$ increasingly differs from the decreasing convention used in \cite{RTStranding}. This scales the web vector formula from that paper by a global power of $q$ that depends solely on $\vec{k}$. To maintain expositional clarity, we adopt the ascending basis throughout this paper. To minimize ambiguity, we write $f^{\uparrow}(G)$ where $f^{\uparrow}$ is the map $f$ given in \cite{RTStranding}  scaled according to the appropriate global power.
\end{remark}
\begin{theorem}[{\cite[Theorem 50, Corollary 80]{RTStranding}}]\label{thm: web vector}
For each web $G\in F_n(\vec{k})$, we have $f^{\uparrow}(G)\in \textup{Inv}_n(\vec{k})$.
Moreover, the linear extension of $f^{\uparrow}$ to the free $\mathbb{C}(q)$-vector
space on $F_n(\vec{k})$ surjects onto $\textup{Inv}_n(\vec{k})$.
\end{theorem}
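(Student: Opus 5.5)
We would deduce both statements by identifying $f^{\uparrow}(G)$ with the image of $G$ under the standard functor from $\mathfrak{sl}_n$ webs to $\uq$-representations. First we isotope $G$ so that it is in Morse position with respect to the height function: a vertical composite of elementary pieces. Each piece is one of the following:
\begin{itemize}
\item an identity strand of weight $k$;
\item a merge or split trivalent vertex, with edges of weights $a$, $b$ and $a+b$, or the mod-$n$ variants allowed by the congruence in Definition~\ref{def: webs};
\item a cup or cap pairing $V_k$ with $V_{n-k}$, which is how reversed boundary orientations are absorbed into $k_i$ versus $n-\ell_i$.
\end{itemize}
To each piece we assign its standard $\uq$-intertwiner. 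The merge is the quantum wedge multiplication $V_a\otimes V_b\to V_{a+b}$, the split is its adjoint, and cups and caps are the quantum determinant pairings. The composite is then an intertwiner $\mathbb{C}(q)\to V_n(\vec{k})$, so its value on $1$ lies in $\textup{Inv}_n(\vec{k})$. By isotopy invariance of this functor, the value does not depend on the chosen decomposition.

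The main step is to expand the composite in the monomial basis and match it with the state sum in Definition~\ref{def:vector}. The matrix coefficient of each generator is nonzero exactly when the binary vectors on its incident edges satisfy the local conservation $\sum_i\sigma_v(e_i)b(e_i)=\text{const}$. For a merge, for instance, $x_{\vec b}\wedge_q x_{\vec b'}$ is nonzero exactly when the supports of $\vec b$ and $\vec b'$ are disjoint. So a nonzero term of the expansion is the same thing as a valid binary labeling, and by Theorem~\ref{thm:bijection} the same thing as a valid stranding $S$. The boundary labels are then precisely the $\hat b_S(e_j)$, so the term's basis vector is $x_S$.

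What remains is to show that the scalar attached to $S$ is $(-q)^{\mathrm{cw}(S)-\mathrm{ccw}(S)}$. This is the step I expect to be the main obstacle. Locally, each generator contributes a factor $\pm q^{\,\cdot}$ given by counting inversions between supports: one factor $-q^{\pm1}$ for each pair $i<j$ whose $1$'s are reordered. We would reorganize these factors by the pairs $(i,j)$, so that each factor is attached to the $(i,j)$-strand passing through that generator. Summed along a strand, these local contributions should compute the turning number of the strand. For a closed strand this gives $\pm1$, with the sign set by its orientation. For an open strand one gets $0$ or $-1$ after the boundary normalization; this normalization is the global power of $q$ absorbed into $f^{\uparrow}$ by the Remark preceding the theorem. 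I would first verify this in the case $n=2$, where the scalar for each loop must be $-q-q^{-1}$ summed over its two orientations. I would then do the general computation one generator at a time, reducing cups and caps to the $n=2$ check on each pair $(i,j)$.

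For surjectivity, we would invoke fullness of the web functor on the representation category generated by the $V_k$, following Cautis--Kamnitzer--Morrison. Every invariant is a linear combination of images of tagged webs. Forgetting tags changes each such image only by a nonzero scalar (a sign). Hence the images $f^{\uparrow}(G)$ of untagged webs $G\in F_n(\vec{k})$ still span $\textup{Inv}_n(\vec{k})$.
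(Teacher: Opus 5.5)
The paper gives no argument for this statement. It is quoted from \cite{RTStranding} (Theorem~50 for invariance, Corollary~80 for spanning), so there is no in-paper proof to compare against. Your route is a Murakami--Ohtsuki--Yamada style verification: evaluate a Morse decomposition as a composite of intertwiners, then recognize the monomial expansion as the stranding state sum. It is viable. The bijection between nonzero terms and $\mathcal{B}in(G)$ works once every downward-running segment of a weight-$\ell$ edge is relabeled by $V_{n-\ell}$ and $\vec 1-b$. This relabeling, not a cup or cap, is what produces $\hat b_S(e_j)$ at the boundary, and it preserves which $(i,j)$-strands run along each segment and in which direction. The sign of the exponent depends on conventions: the opposite $q$-commutation rule in $\bigwedge_q$ gives $(-q)^{\mathrm{ccw}-\mathrm{cw}}$. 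So your conventions must be those of \cite[Section~3.4]{RTStranding}.

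The step you flag as the main obstacle needs two facts you have not stated.
\begin{itemize}
\item At each generator, a reordered pair $i<j$ is exactly an $(i,j)$-strand making a counterclockwise half-turn there. For example, at a merge with $j$ in the left label and $i$ in the right label, the strand goes up the right edge and down the left one. You must therefore check that a counterclockwise turn carries the same extra factor $(-q)^{\delta}$, relative to a clockwise turn, at every maximum and every minimum. For cups versus caps this is forced by the zigzag identity; for merges versus splits it must be read off the explicit formulas.
\item The number of strands turning at each generator does not depend on the labeling: it is $ab$ at a merge or split of $a$ and $b$, $k(n-k)$ at a cup or cap, and $(n-a)(n-b)$ at a mod-$n$ vertex. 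Hence the number of counterclockwise turns is a constant plus the total rotation $\sum_\gamma \mathrm{rot}(\gamma)$ of the strands.
\end{itemize}

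Your treatment of open strands is misattributed. The power of $q$ in the Remark only compensates for reversing the wedge order. What actually makes the per-arc offset $S$-independent is this: each $v_j$ is an endpoint of exactly $k_j(n-k_j)$ strands for every $S$. So there are always $\frac12\sum_j k_j(n-k_j)$ open strands, and a clockwise and a counterclockwise arc differ in rotation by exactly one.

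You also do not need isotopy invariance of an untagged evaluation. Matching a single fixed decomposition to the intrinsic state sum already gives independence of the choice.

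For spanning, justify ``forgetting tags'' by the one-dimensionality of $\mathrm{Hom}(V_a\otimes V_b,V_c)$ for $c\equiv a+b \pmod n$, and of $\mathrm{Hom}(V_k\otimes V_{n-k},\mathbb{C}(q))$. Then each generator of \cite{CKM}, with tags absorbed via $V_k^*\cong V_{n-k}$, is a nonzero multiple of yours, though not necessarily a sign. Components not meeting the boundary contribute only scalars.
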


The kernel of $f^{\uparrow}$ is described in \cite[Theorem 83]{RTStranding} by a finite list of local relations on web graphs. We use exactly one of these, the edge flip relation, and only in Section~\ref{section: sl3 stranding}.

\begin{example}\label{ex: monomial}
All boundary edges in Figure~\ref{fig: running web} are directed into their boundary vertices, so $\sigma_{v_j}(e_j)=+1$ and $\hat{b}_S(e_j)=b_S(e_j)$ for every $j$. Reading the boundary binary labels of $S$ from Figure~\ref{fig: running stranding} left to right gives $1000, 0100, 0010, 0100, 0010, 0001, 1000, 0001$, so the strand monomial of $S$ is
$$
x_S = x_1\otimes x_2\otimes x_3\otimes x_2\otimes x_3\otimes x_4\otimes x_1\otimes x_4 \in V_1^{\otimes 8}.
$$
By Theorem~\ref{thm: nonzero coefficient} below, $x_S$ has nonzero coefficient in $f^{\uparrow}(G)$.
\end{example}

Our approach to analyzing web vectors rests on the following theorem, which guarantees that no 
stranding term cancels in $f^{\uparrow}(G)$.

\begin{theorem}[\cite{RTStranding}, Theorem~51]\label{thm: nonzero coefficient}
For any $G\in F_n(\vec{k})$ and any $S\in\mathcal{S}tr(G)$, the monomial 
$x_S$ has nonzero coefficient in $f^{\uparrow}(G)$.
\end{theorem}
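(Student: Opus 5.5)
The plan is to compute the coefficient of $x_S$ in $f^{\uparrow}(G)$ directly from Definition~\ref{def:vector} and show that it is a nonzero Laurent polynomial in $q$. The key observation is that no two terms of the state sum can cancel: each contributes a \emph{signed monomial} $(-q)^e$, and the sign is determined by the exponent.

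First, I will isolate the coefficient. The tensors $x_{\vec b_1}\otimes\cdots\otimes x_{\vec b_m}$ with $|\vec b_j|=k_j$ form a basis of $V_n(\vec{k})$. Each strand monomial $x_{S'}$ is one of these basis vectors; there is no sign or reordering ambiguity, because $x_{\vec b}$ is defined with the factors sorted increasingly. Distinct boundary data $(\hat b_{S'}(e_1),\ldots,\hat b_{S'}(e_m))$ give distinct basis vectors. Hence the coefficient of $x_S$ in $f^{\uparrow}(G)$ is
$$
c_S(q)=\sum_{S'\in\mathcal{S}tr(G),\ x_{S'}=x_S}(-q)^{\mathrm{cw}(S')-\mathrm{ccw}(S')}.
$$
This is a finite sum, since $\mathcal{S}tr(G)$ is finite (it is in bijection with $\mathcal{B}in(G)$ by Theorem~\ref{thm:bijection}). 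It is also nonempty, since $S$ itself appears.

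Second, I will group the terms by exponent. For each integer $e$, let $N_e\ge 0$ be the number of strandings $S'$ with $x_{S'}=x_S$ and strand exponent $e$. Then
$$
c_S(q)=\sum_{e\in\mathbb{Z}}(-1)^eN_e\,q^e .
$$
All terms with the same power of $q$ carry the same sign $(-1)^e$, so the coefficient of $q^e$ in $c_S(q)$ is $\pm N_e$. Taking $e_0$ to be the strand exponent of $S$, we have $N_{e_0}\ge 1$. Thus $c_S(q)$ is a Laurent polynomial in $q$ with a nonzero coefficient, and it is therefore a nonzero element of $\mathbb{C}(q)$.

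I do not expect a serious obstacle here; the only step needing care is the first one. There one must check that the strand monomial is genuinely a basis vector with coefficient $+1$, rather than a wedge product whose reordering could introduce a factor such as $-q^{\pm1}$. That factor would break the sign--exponent correlation and allow cancellation. Our ascending convention for $x_{\vec b}$ settles this by definition: all $q$-dependence and sign are carried entirely by $(-q)^{\mathrm{cw}(S')-\mathrm{ccw}(S')}$. If one instead worked in the convention of \cite{RTStranding}, the same argument applies, because the two conventions differ only by a global power of $q$ depending on $\vec{k}$, as noted in the preceding remark.
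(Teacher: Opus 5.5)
Your proof is correct. This paper takes the state sum $\sum_{S}(-q)^{\mathrm{cw}(S)-\mathrm{ccw}(S)}x_S$ as the definition of $f^{\uparrow}(G)$, so grouping the strandings with strand monomial $x_S$ by exponent shows that each power $q^e$ carries the single sign $(-1)^e$; hence the coefficient of $q^{e_0}$ is $\pm N_{e_0}\neq 0$.

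The paper does not reprove the statement. It imports it from \cite[Theorem~51]{RTStranding} and summarizes it as ``these contributions never cancel,'' which is exactly the sign--exponent argument you give. An equivalent shortcut is to specialize $q=-1$: every term becomes $1$, so the coefficient equals the positive count of such strandings.
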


\subsection{Web bases and leading terms}\label{subsection: web bases}

Having attached a vector to each web, we now ask when a set of webs yields a basis of
$\textup{Inv}_n(\vec{k})$.

\begin{definition}[Web basis]\label{def: web basis}
A {\bf web basis} for $\textup{Inv}_n(\vec{k})$ is a set
$\{G_1,\ldots,G_N\}\subseteq F_n(\vec{k})$ for which
$\{f^{\uparrow}(G_1),\ldots,f^{\uparrow}(G_N)\}$ is a basis of
$\textup{Inv}_n(\vec{k})$.
\end{definition}
To detect web bases, we order the monomial basis of $V_n(\vec{k})$
lexicographically.
\begin{definition}\label{def: lex order}
Fix an integer $n\geq 2$ and a tuple $\vec{k}=(k_1,\ldots,k_m)$ with
$1\leq k_j\leq n-1$.
\begin{itemize}
\item For a binary vector $\vec{b}$, let $\mathrm{seq}(\vec{b})$ be the
increasing sequence of positions of the $1$s in $\vec{b}$. The {\bf
lexicographic order} $\prec$ {\bf on the monomial basis of $V_{k_i}$} is induced
by the lexicographic order on these sequences: for instance
$x_{1100}\prec x_{1010}$, since $\mathrm{seq}(1100)=(1,2)\prec(1,3)=\mathrm{seq}(1010)$.
\item The {\bf lexicographic order} $\prec$ {\bf on the monomial basis of
$V_n(\vec{k})=V_{k_1}\otimes\cdots\otimes V_{k_m}$} is induced by the
lexicographic order on the concatenated sequences
$\mathrm{seq}(\vec{b}_1)\cdots\mathrm{seq}(\vec{b}_m)$: for instance
$x_{1001}\otimes x_{1100}\prec x_{1001}\otimes x_{1010}$.
\item The {\bf leading term} $\mathrm{lt}(w)$ of a nonzero $w\in V_n(\vec{k})$
is its $\prec$-smallest monomial with nonzero coefficient.
\item Let $G\in F_n(\vec{k})$. If $S^{\star}\in\mathcal{S}tr(G)$ such that
$\mathrm{lt}(f^{\uparrow}(G))=x_{S^{\star}}$, we call $S^{\star}$ a {\bf leading
term stranding} of $G$.
\end{itemize}
\end{definition}

Next, we show that all open strands of a leading term stranding are clockwise.

\begin{lemma}\label{lem: open strands clockwise}
Let $G\in F_n(\vec{k})$ and let $S^{\star}\in\mathcal{S}tr(G)$ be a leading term stranding of $G$. Then every open strand of $S^{\star}$ is clockwise.
\end{lemma}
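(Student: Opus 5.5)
Argue by contradiction using strand reversal. Suppose $S^{\star}$ is a leading term stranding and some open $(i,j)$-strand $\gamma$ of $S^{\star}$ (with $1\le i<j\le n$) is counterclockwise. By Lemma~\ref{lem: strand reversal}, reversing $\gamma$ gives a valid stranding $S'\in\mathcal{S}tr(G)$. By Theorem~\ref{thm: nonzero coefficient}, $x_{S'}$ has nonzero coefficient in $f^{\uparrow}(G)$. It therefore suffices to show $x_{S'}\prec x_{S^{\star}}$, which contradicts the minimality of $x_{S^{\star}}=\mathrm{lt}(f^{\uparrow}(G))$. (This holds only if $x_{S'}\ne x_{S^\star}$, and that follows from the comparison below.)

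The key step is comparing boundary labels. Reversal changes $b$ only on edges of $\gamma$. Since $\gamma$ is simple, the only boundary edges it meets are the two at its endpoints, $e_p$ and $e_r$, with $v_p$ to the left of $v_r$. Because $\gamma$ is counterclockwise, it starts at $v_p$ and ends at $v_r$. So it suffices to understand $\hat b$ at the leftmost changed factor $p$: the factors $1,\dots,p-1$ agree, and lex order is decided at factor $p$.

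At $v_p$, $\gamma$ leaves the boundary vertex along $e_p$. Suppose first that $e_p$ is directed into $v_p$, so $\sigma_{v_p}(e_p)=1$ and $\hat b=b$. Then $\gamma$ traverses $e_p$ against its orientation, so $(i,j)\in L^-(e_p)$, that is, $b_i=0$ and $b_j=1$. Swapping these entries moves a $1$ from position $j$ to the smaller position $i$, and the entries between $i$ and $j$ are unchanged. The sequence $\mathrm{seq}$ therefore becomes lexicographically smaller: the positions before $i$ agree, and at the first point of difference the new sequence has $i$ where the old one had an entry larger than $i$. If instead $e_p$ is directed out of $v_p$, then $\gamma$ runs along $e_p$, so $b_i=1$, $b_j=0$, and $\hat b=\vec1-b$ has $\hat b_i=0$, $\hat b_j=1$. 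The swap again produces a strictly smaller $\hat b$. In both cases $\hat b_{S'}(e_p)\prec\hat b_{S^{\star}}(e_p)$ while the earlier factors agree, so $x_{S'}\prec x_{S^{\star}}$.

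The main obstacle is the orientation and sign bookkeeping: matching the convention for which direction $\gamma$ traverses each edge (Definition~\ref{def: ij-strands}) with the complementation in $\hat b$ and with the left/right meaning of "counterclockwise." I would check this against Example~\ref{ex: strand reversal}. There the counterclockwise strand from $v_3$ to $v_4$ is reversed, and $0010,0100$ becomes $0100,0010$, which is lexicographically smaller, as predicted.
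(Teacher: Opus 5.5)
Your proposal is correct and follows essentially the same route as the paper: reverse the counterclockwise open $(i,j)$-strand, observe that the leftmost changed tensor factor trades $x_j$ for $x_i$, so $x_{S'}\prec x_{S^\star}$, and invoke Theorem~\ref{thm: nonzero coefficient} for the contradiction. The only difference is that you verify explicitly, in both cases $\sigma_{v_p}(e_p)=\pm1$, that the tail of $\gamma$ carries $\hat b_i=0,\ \hat b_j=1$; the paper simply asserts this.
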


\begin{proof}
Suppose $S^\star$ has a counterclockwise open $(i,j)$-strand $\gamma$, with endpoints $v_a$ and $v_b$, where $1\leq i<j\leq n$ and $a<b$, so that $\gamma$ has its tail at $v_a$ and head at $v_b$. Let $S'$ be the stranding obtained by reversing $\gamma$ in $S^\star$, which is valid by Lemma~\ref{lem: strand reversal}.

Since $\gamma$ has its tail at $v_a$ and head at $v_b$ with $a<b$, the monomial $x_{S^\star}$ contains $x_j$ (not $x_i$) in tensor factor $a$ and $x_i$ (not $x_j$) in tensor factor $b$. The strand monomial $x_{S'}$ swaps these, placing $x_i$ in factor $a$ and $x_j$ in factor $b$. Since $i<j$, we have $x_{S'}\prec x_{S^\star}$.

By Theorem~\ref{thm: nonzero coefficient}, $x_{S'}$ has nonzero coefficient in $f^{\uparrow}(G)$, contradicting the assumption that $x_{S^\star}$ is the leading term of $f^{\uparrow}(G)$. Hence every open strand of $S^\star$ is clockwise.
\end{proof}

\begin{example}
    Figure~\ref{fig: strand reversal example} shows a stranding $S'$ obtained by reversing a counterclockwise open strand in a stranding $S$. The strand monomials $x_S$ and $x_{S'}$ are identical except in the
third and fourth tensor factors where $x_S$ has $x_3\otimes x_2$  and $x_{S'}$ has $x_2\otimes x_3$. Thus $x_{S'}\prec x_S$.
\end{example}

Theorem~\ref{thm: invariant space dimension} computes the dimension of $\textup{Inv}_n(\vec{k})$ using
row-strict tableaux. To make leading terms compatible with this setup, we encode the
strand monomial with a tableau.

\begin{definition}[Tableau of a stranding]\label{def: tableau of stranding}
For $G\in F_n(\vec{k})$ and $S\in\mathcal{S}tr(G)$, define the {\bf tableau of $S$}, denoted by $T_S$, by placing $j$ in row $c$ (for $1\leq c\leq n$, $1\leq j\leq m$) exactly when $x_c$ occurs in the $j$th tensor factor of $x_S$ and arranging the entries of each row in increasing order from left to right.
\end{definition}

\begin{theorem}\label{thm: leading term row strict}
Let $G\in F_n(\vec{k})$, and let $S^{\star}$ be a leading term stranding of $G$. Then $T_{S^{\star}}\in\mathcal{RST}_n(\vec{k})$.
\end{theorem}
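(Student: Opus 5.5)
The plan is to translate the clockwise condition of Lemma~\ref{lem: open strands clockwise}, applied to simple strands (the $(c,c+1)$-strands), into a ballot condition on the rows of $T_{S^\star}$, and then check that this ballot condition is exactly what Definition~\ref{def: RST} requires. Two properties of $T_{S^\star}$ are automatic from the construction. First, each $j$ occurs at most once in each row, because $\hat b_{S^\star}(e_j)$ is a binary vector. Second, the content is $\{1^{k_1},\ldots,m^{k_m}\}$, because $|\hat b_{S^\star}(e_j)|=k_j$. So after sorting, rows strictly increase. It remains to show that all $n$ rows have equal length, which gives a rectangle and forces $n\mid\sum_j k_j$, and that columns weakly increase.

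\textbf{Step 1: local reading at a boundary vertex.} Fix $1\le c<n$ and a boundary vertex $v_j$. I will check, in the two cases $\sigma_{v_j}(e_j)=\pm1$, that a color-$c$ simple strand runs along $e_j$ exactly when $\hat b_{S^\star}(e_j)_c\neq \hat b_{S^\star}(e_j)_{c+1}$. Moreover, such a strand has its head at $v_j$ when $x_c$ (but not $x_{c+1}$) occurs in factor $j$ of $x_{S^\star}$, and its tail at $v_j$ when $x_{c+1}$ (but not $x_c$) occurs. Here is the case $\sigma=+1$: then $\alpha_c^\vee=1$ means the strand runs with $e_j$, hence into $v_j$. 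In the case $\sigma=-1$ the complement $\vec 1-b$ flips the sign of $\alpha_c^\vee$, while the edge now points out of $v_j$, so the conclusion is the same. This matches the convention already used in the proof of Lemma~\ref{lem: open strands clockwise}. Since $v_j$ is univalent, any strand using $e_j$ terminates at $v_j$, so it is an open strand there. Closed strands never reach the boundary.

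\textbf{Step 2: matching and the ballot inequality.} The open color-$c$ simple strands therefore give a matching between two sets of positions. The first is the set $H_c$ of positions $j$ with $c$ in row $c$'s sense only, meaning $x_c$ but not $x_{c+1}$ occurs in factor $j$. The second is the set $T_c$ of positions with $x_{c+1}$ but not $x_c$. Each strand pairs one head with one tail, and these endpoints are distinct because strands of the same color are disjoint. By Lemma~\ref{lem: open strands clockwise} every such strand is clockwise, so its head lies to the left of its tail. Hence for every $p$ the number of tails in $\{1,\ldots,p\}$ is at most the number of heads there. Positions containing both $x_c$ and $x_{c+1}$, or neither, contribute equally to rows $c$ and $c+1$. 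It follows that
\[
\#\{\text{entries}\le p \text{ in row } c+1\}\le \#\{\text{entries}\le p\text{ in row } c\}
\]
for all $p$ and $c$, with equality at $p=m$ because the matching is perfect.

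\textbf{Step 3: conclusion.} Equality at $p=m$ for every $c$ shows that all rows have a common length $L$. Since the total number of entries is $\sum_j k_j$, we get $nL=\sum_j k_j$, so the shape is the $n$-row rectangle. The prefix inequality is then the standard reformulation of column weakness. Suppose the $t$-th entry of row $c+1$ were smaller than the $t$-th entry of row $c$, and take $p$ to be that row-$(c+1)$ entry. Then row $c+1$ would have at least $t$ entries $\le p$ while row $c$ has at most $t-1$, a contradiction. Thus $T_{S^\star}\in\mathcal{RST}_n(\vec k)$.

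The main obstacle is Step 1: getting the orientation bookkeeping right at boundary vertices in both cases $\sigma=\pm1$, including the complement $\hat b$, so that ``clockwise'' translates precisely into ``the head lies left of the tail, i.e.\ the earlier position carries $x_c$.''
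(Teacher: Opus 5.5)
Your proposal is correct and is essentially the paper's proof. Both arguments use Lemma~\ref{lem: open strands clockwise} for the open color-$c$ simple strands to get the prefix inequality (row $c+1$ has at most as many entries $\le p$ as row $c$), with equality at $p=m$ because every open strand has one head and one tail. Your version only makes explicit steps the paper leaves implicit: the $\sigma=\pm1$ bookkeeping at boundary vertices, the divisibility $n\mid\sum_j k_j$, and the translation of the prefix counts into weakly increasing columns.
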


\begin{proof}
For $1\leq c\leq n$ and $1\leq j\leq m$, let $\rho_c(j)$ denote the number of entries of row $c$ of $T_{S^\star}$ that are at most $j$. Since $x_c$ is in the $j'$th tensor factor of $x_{S^\star}$ exactly when $\hat{b}_{S^\star}(e_{j'})_c=1$, we have
$$
\rho_c(j) = \sum_{j'=1}^j \hat{b}_{S^\star}(e_{j'})_c.
$$

Fix $1\leq c<n$. By Definition~\ref{def: ij-strands}, $v_{j'}$ is the head of a color-$c$ simple strand of $S^\star$ exactly when $\hat{b}_{S^\star}(e_{j'})_c=1$ and $\hat{b}_{S^\star}(e_{j'})_{c+1}=0$ and the tail exactly when $\hat{b}_{S^\star}(e_{j'})_c=0$ and $\hat{b}_{S^\star}(e_{j'})_{c+1}=1$. Therefore, the quantity
$$
\rho_c(j)-\rho_{c+1}(j)= \sum_{j'=1}^j (\hat{b}_{S^\star}(e_{j'})_c-\hat{b}_{S^\star}(e_{j'})_{c+1})$$
is exactly the number of heads minus the number of tails of color-$c$ simple strands meeting the boundary to the left of $v_{j+1}$. 

By Lemma \ref{lem: open strands clockwise}, every open strand of $S^\star$ is clockwise meaning the head of each strand is to the left of its tail. Therefore, $
\rho_c(j)-\rho_{c+1}(j)\geq 0$ for all $1\leq j\leq m$, and the columns of $T_{S^\star}$ weakly increase from top to bottom. Every open strand has both a head and tail, so $\rho_c(m)-\rho_{c+1}(m)= 0$ meaning all rows of $T_{S^\star}$ have the same length. 
Since $\hat{b}_{S^\star}(e_j)$ has exactly $k_j$ nonzero entries, the value $j$ appears in exactly $k_j$ rows of $T_{S^\star}$, so $T_{S^\star}$ has content $\{1^{k_1},\ldots,m^{k_m}\}$. This proves $T_{S^\star}\in\mathcal{RST}_n(\vec{k})$.
\end{proof}

\begin{remark}
While Lemma~\ref{lem: open strands clockwise} concerns all open $(i,j)$-strands, the proof of Theorem~\ref{thm: leading term row strict} only uses that open simple strands are clockwise. There exist strandings whose
open simple strands are all clockwise but which admit at least one
counterclockwise induced open $(i,j)$-strand. While the tableaux for such strandings are row-strict, by
Lemma~\ref{lem: open strands clockwise} these are not leading term
strandings. Whether every open strand (simple and induced) being clockwise is a
sufficient condition for identifying leading term strandings remains open.
\end{remark}

By Theorem~\ref{thm: invariant space dimension} and a standard triangularity argument, we get the following sufficient condition for a web basis.

\begin{corollary}\label{cor: basis criterion via tableaux}
Let $B\subseteq F_n(\vec{k})$ such that $|B|=|\mathcal{RST}_n(\vec{k})|$, and for $G\in B$ let $S^\star(G)$ be a leading term stranding of $G$. If $\{T_{S^\star(G)}:G\in B\}=\mathcal{RST}_n(\vec{k})$, then $B$ is a web basis for $\textup{Inv}_n(\vec{k})$.
\end{corollary}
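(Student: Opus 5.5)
The plan is a triangularity argument driven by distinct leading terms. First, the leading monomial of a stranding determines its tableau. Given $G\in B$, the tableau $T_{S^\star(G)}$ records exactly which $x_c$ occurs in each tensor factor of $x_{S^\star(G)}=\mathrm{lt}(f^{\uparrow}(G))$. This is because $x_c$ lies in factor $j$ if and only if $j$ lies in row $c$, and each tensor factor $x_{\vec b}$ is determined by the set of positions of its $1$s. So the map sending a monomial $x_{\vec b_1}\otimes\cdots\otimes x_{\vec b_m}$ to its tableau is injective on monomials. Its image on leading terms lies in $\mathcal{RST}_n(\vec{k})$ by Theorem~\ref{thm: leading term row strict}. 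Since $\{T_{S^\star(G)}:G\in B\}=\mathcal{RST}_n(\vec{k})$ and $|B|=|\mathcal{RST}_n(\vec{k})|$, the assignment $G\mapsto T_{S^\star(G)}$ is a bijection $B\to\mathcal{RST}_n(\vec{k})$. Hence the leading terms $\mathrm{lt}(f^{\uparrow}(G))$, $G\in B$, are pairwise distinct monomials. The $G\in B$ are then distinct webs, and the vectors $f^{\uparrow}(G)$ are nonzero because each has a leading term.

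Second, I prove linear independence by the standard argument. Suppose $\sum_{G\in B} a_G f^{\uparrow}(G)=0$ with not all $a_G=0$. Among the $G$ with $a_G\neq 0$, choose $G_0$ whose leading term $\mathrm{lt}(f^{\uparrow}(G_0))$ is $\prec$-smallest; it is unique because the leading terms are distinct. Every other $f^{\uparrow}(G)$ with $a_G\ne0$ has all its monomials $\succeq \mathrm{lt}(f^{\uparrow}(G))\succ \mathrm{lt}(f^{\uparrow}(G_0))$. Hence the coefficient of $\mathrm{lt}(f^{\uparrow}(G_0))$ in the sum is $a_{G_0}$ times a nonzero scalar, which is a contradiction. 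Here $\prec$ must be a total order on monomials. This holds because concatenated sequences $\mathrm{seq}(\vec b_1)\cdots\mathrm{seq}(\vec b_m)$ all have the same length $\sum k_j$ for fixed $\vec{k}$, so lexicographic comparison is total. It also relies on the fact that the set of monomials with nonzero coefficient has a minimum.

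Finally, each $f^{\uparrow}(G)$ lies in $\textup{Inv}_n(\vec{k})$ by Theorem~\ref{thm: web vector}. We have $|B|$ linearly independent vectors in a space of dimension $|\mathcal{RST}_n(\vec{k})|=|B|$ by Theorem~\ref{thm: invariant space dimension}, so they form a basis and $B$ is a web basis. The only step needing care is the first: checking that the tableau faithfully encodes the monomial, so that distinct tableaux force distinct leading terms. This holds because the rows of $T_S$ list, for each $c$, exactly the factors containing $x_c$. I expect no substantial obstacle beyond that bookkeeping.
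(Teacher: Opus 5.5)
Your proof is correct and follows the paper's approach: the paper invokes exactly this standard triangularity argument together with the dimension count from Theorem~\ref{thm: invariant space dimension}, and you have simply written it out in full. One small remark is that the step ``distinct tableaux force distinct leading terms'' needs only that $T_S$ is determined by $x_S$, which is immediate from Definition~\ref{def: tableau of stranding}, not the injectivity of monomial $\mapsto$ tableau that you emphasize; likewise, the appeal to Theorem~\ref{thm: leading term row strict} is unnecessary given the hypothesis, though both are harmless.
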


\section{A web basis from colored noncrossing matchings}\label{section: matchings}

In this section, we construct a set of webs directly from a set of row-strict tableaux by way of an intermediate object called a colored noncrossing matching. The curves of the matching equip each web with a stranding. We then show the stranding coming from the matching is a leading term stranding. Corollary~\ref{cor: basis criterion via tableaux} shows the webs form a web basis.

Our construction generalizes the web--tableau correspondences of Tymoczko for
$\mathfrak{sl}_3$ and three-row standard Young tableaux \cite{TSimpleBij}, and
Russell for $\mathfrak{sl}_3$ and row-strict three-row tableaux
\cite{RussellSemistandard}, both of which replace the recursive algorithm of Khovanov
and Kuperberg \cite{KK} with a direct one. For general $n$, the noncrossing matchings implicit in the growth algorithms of
Fontaine \cite{FON} and, in its $\mathfrak{sl}_n$ restriction, Westbury
\cite{WestburyBases} satisfy conditions (a)--(d) of Step (3), so the web bases those
algorithms produce arise from Construction~\ref{def:tableau matching}.

\subsection{From a row-strict tableau to a colored matching}\label{subsection: matching construction}

The arcs constructed below become the simple strands of the web built in
Section~\ref{subsection: stranded web}; constructing the arcs first allows the stranding to
dictate the structure of the web rather than the reverse.

\begin{construction}\label{def:tableau matching}
Let $n\geq 2$ and $\vec{k}=(k_1,\dots,k_m)$ with $1\leq k_i<n$ for all $i$, and let
$T\in\mathcal{RST}_n(\vec{k})$. Construct a collection of oriented, colored arcs embedded below the boundary axis, with endpoints on the boundary axis, as follows.
\begin{enumerate}
  \item \textbf{Endpoints.}
    Place boundary vertices $v_1,\dots,v_m$ from left to right. For each $i$ and each
    color $c\in\{1,\dots,n-1\}$, if $i$ occurs in exactly one of rows $c$ and $c+1$ of
    $T$, attach to $v_i$ a color-$c$ \emph{endpoint}, of type ``in" if $i$ lies in
    row $c$ and of type ``out" if $i$ lies in row $c+1$. A vertex may carry endpoints of several colors but at most one
    endpoint of each color.

\item \textbf{Pairing.}
    For each $c$, the color-$c$ endpoints admit a unique noncrossing matching
    pairing every in-endpoint with an out-endpoint to its right: reading left to
    right, the row-strictness and column weak increase of $T$ guarantee the color-$c$
    in- and out-endpoints form a balanced sequence, and
    such a sequence has a unique noncrossing matching. Write $(v_a,v_b)$ for
    its pairs where the in-endpoint is $v_a$ and the out-endpoint is $v_b$, so $a<b$.

  \item \textbf{Arcs.}
  For each pair $(v_a,v_b)$ of color $c$, draw an arc of color $c$ below the axis,
    oriented from the out-endpoint $v_b$ to the in-endpoint $v_a$, so that:
    \begin{enumerate}
      \item arcs of the same color are disjoint;
\item at each vertex, reading the incident arcs from left to right below the axis,
  every out-arc precedes every in-arc, and the out-arc colors increase while the in-arc
  colors decrease (or the out-arc colors decrease and the in-arc colors increase);
      \item away from their endpoints, two arcs of different colors cross at most once; and
      \item away from the boundary, no point lies on more than two arcs.
    \end{enumerate}
\item \textbf{Graph and curve system.}
The arcs induce an oriented half-plane graph $M_T$ where  arc-crossings are 4-valent interior vertices, boundary vertices are $v_1,\ldots, v_m$, and each arc is broken into a sequence of edges inheriting the arc's orientation. On $M_T$, the colored arcs form a curve system which we denote by $C_T$. Since every edge of $M_T$ carries a single curve of $C_T$, the system $C_T$ is balanced. 
\end{enumerate}
We call $M_T$ (equipped with $C_T$) a \textbf{colored noncrossing matching} for $T$. 
\end{construction}

Step (3) asserts that arcs with these properties can be drawn. We verify this now.

\begin{lemma}\label{lem:existence of M_T}
The embedded arcs satisfying conditions (a)--(d) of Step (3) in Construction~\ref{def:tableau matching} exist.
\end{lemma}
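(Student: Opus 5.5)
We will construct the arcs explicitly rather than argue abstractly. The plan is to draw every arc as a concrete curve determined by its two endpoints and a small angular offset depending on its color, and then check conditions (a)--(d) directly.

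\textbf{Step 1: the basic drawing.} Place $v_i$ at the point $(i,0)$. Draw the color-$c$ arc for a pair $(v_a,v_b)$ as a lower semicircle-like curve, namely the graph of a concave-up function $y=-h_c\,\phi\bigl((x-a)/(b-a)\bigr)(b-a)$ over $[a,b]$, where $\phi$ is a fixed bump vanishing only at $0$ and $1$.

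\textbf{Step 2: perturb near the boundary to get (b).} Near each boundary vertex, modify the arcs in a small disk around $v_i$. Each arc leaving $v_i$ rightwards is an in-arc, since $v_i$ is its left endpoint $v_a$. Each arc arriving from the left is an out-arc, since $v_i$ is its right endpoint $v_b$. So out-arcs automatically lie to the left of in-arcs below the axis. Within the disk, fan the out-arcs so their colors increase left to right, and fan the in-arcs so their colors decrease left to right. Because $v_i$ carries at most one endpoint of each color, these slopes can be chosen distinct.

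\textbf{Step 3: disjointness within a color, giving (a).} Same-colored pairs form a noncrossing matching, so their intervals $[a,b]$ are nested or disjoint. Nested intervals give nested semicircle-type curves, which do not meet away from the endpoints. An exception would be shared endpoints, but that cannot occur within one color.

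\textbf{Step 4: different colors cross at most once, giving (c) --- the main obstacle.} Consider two arcs of different colors.
\begin{itemize}
\item If their intervals are disjoint, the arcs do not meet.
\item If the intervals properly interlace, two such convex curves cross an odd number of times. With genuine semicircles (circles centered on the axis) they cross exactly once, since two circles meet in at most two points, symmetric about the axis.
\item If the intervals are nested with distinct endpoints, semicircles do not meet.
\item If the intervals share an endpoint, the local fan of Step 2 must be consistent with the global picture.
\end{itemize}

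So I would use true semicircles globally, since two circles centered on a line meet at most once in the lower half-plane. The only delicate case is arcs sharing an endpoint: there they are tangent at the shared point, and the fan ordering may force one extra crossing. I would handle this by replacing semicircles with arcs of circles through both endpoints whose centers are shifted slightly off the axis, with the shift depending on color. The shifts are chosen so that the tangent directions at $v_i$ realize the order of Step 2. Two distinct circles still meet in at most two points, and one of those points is the shared endpoint. Hence at most one crossing remains away from the endpoints, which is exactly what (c) requires, and it holds for all pairs. Small generic shifts preserve the transversality of the disjoint and interlaced cases.

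\textbf{Step 5: general position, giving (d).} Finally, a generic small perturbation of the circle parameters ensures that no three arcs pass through a common interior point. This preserves (a)--(c), because those conditions are open conditions once crossings are transverse, and the crossing counts are already bounded by the circle geometry.
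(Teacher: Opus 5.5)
Your construction works, but by a different route from the paper's.

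\textbf{The paper's route.} The paper separates endpoints. It replaces each $v_i$ by a consecutive block of distinct points on a horizontal line below the axis. It joins these points to $v_i$ by a straight-line fan, so (b) is imposed purely by the order of points in the block. Each arc is then a tent, built from a slope $-1$ ray at its in-point and a slope $+1$ ray at its out-point. Tents start at distinct points and parallel rays never meet, so (a), (c), (d) are immediate with no perturbation.

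\textbf{Your route.} You keep shared endpoints and rely on circle rigidity: two distinct circles meet at most twice. So a shared endpoint leaves at most one further crossing. Two arcs of different colors on the same pair of vertices do not cross at all; this case occurs, e.g.\ colors $1$ and $3$ on $(v_3,v_4)$ in the paper's $n=5$ example. That is a clean way to get (c). The cost is that you must encode (b) in tangent directions and obtain (d) by genericity.

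\textbf{The step you assert rather than check.} You claim that color-dependent shifts realize the fan order; this needs a quantitative choice.
\begin{itemize}
\item The lower arc of the circle through $(a,0)$ and $(b,0)$ centered at $\bigl(\tfrac{a+b}{2},k\bigr)$ leaves each endpoint at angle $\arctan\bigl(2k/(b-a)\bigr)$ from the downward vertical. When $k>0$ it tilts into the interval.
\item So the shifts should be positive; then out-arcs lean left and in-arcs lean right, as (b) requires.
\item The tilt depends on the width, so a per-color shift orders the fan at every vertex only if it dominates all width ratios, e.g.\ $k_c=\epsilon\, m^{-c}$. Alternatively, take $k=\epsilon\mu_c\tfrac{b-a}{2}$ with $\mu_1>\mu_2>\cdots>0$.
\item The smallness of $\epsilon$ is essential, not cosmetic. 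For nested intervals with distinct endpoints, a deep inner arc can dip below a much flatter outer arc of another color and cross it twice. You need the positive distance between nested semicircles with distinct endpoints to rule this out.
\end{itemize}

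\textbf{With these choices, the remaining conditions hold.}
\begin{itemize}
\item Arcs over disjoint intervals stay apart, since minor arcs lie over their own intervals.
\item Interlaced arcs cross exactly once for any heights, because their radical axis meets the boundary axis at a point inside both circles.
\item A further small generic change of the heights preserves the strict tilt inequalities and removes triple points by a dimension count.
\end{itemize}
Steps 1--2 are then superseded and can be dropped.
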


\begin{proof}
Let $T\in\mathcal{RST}_n(\vec{k})$, and say that steps (1) and (2) of Construction~\ref{def:tableau matching} generate $t$ arcs with $2t$ endpoints. On a horizontal line below the boundary axis, place $2t$ equally-spaced points. Reading left to right, assign these points to $v_1,\cdots,v_m$ in turn, so that each $v_i$ receives a consecutive block of points, one for each endpoint incident to $v_i$. Join every point to its vertex by a straight segment, coloring and orienting the segments in or out so that within each $v_i$'s block the colors at $v_i$ satisfy condition (b).

Draw the ray of slope $-1$ from the open end of each in-segment and the ray of slope $+1$ from each out-segment into the lower half-plane. Where a color-$c$ in-ray first meets a color-$c$ out-ray, terminate both, yielding a color-$c$ arc oriented from its out-point to its in-point. Since the rays originate from distinct points and all have slope $\pm 1$, no more than two rays meet at any point (condition (d)). Same-color arcs are disjoint by construction (condition (a)). Since arcs are formed from two straight segments, arcs of different colors cross at most once (condition (c)).
\end{proof}

In the figures, we depict a colored noncrossing matching by drawing only the curve system $C_T$; the graph $M_T$ is left implicit.

\begin{example}
Let $n=5$ and $\vec{k}=(1,1,2,2,2,2)$, and consider the following $T\in \mathcal{RST}_5(\vec k)$.
\[
  \raisebox{-25pt}{$T=\;\;$}\ytableaushort{13,24,35,46,56}
\]
The entries $1,\dots,6$ give boundary vertices $v_1,\dots,v_6$, and the colors
$c\in\{1,2,3,4\}$ are drawn blue, red, green, and violet. Vertex endpoints are as follows
\[
  v_1\colon \text{in }1;\quad
  v_2\colon \text{in }2,\ \text{out }1;\quad
  v_3\colon \text{in }1,3,\ \text{out }2;
\]
\[
  v_4\colon \text{in }2,4,\ \text{out }1,3;\quad
  v_5\colon \text{in }3,\ \text{out }2,4;\quad
  v_6\colon \text{out }3.
\]
The associated noncrossing matchings are:
\[
  1\colon (v_1,v_2),(v_3,v_4);\quad
  2\colon (v_2,v_3),(v_4,v_5);\quad
  3\colon (v_3,v_4),(v_5,v_6);\quad
  4\colon (v_4,v_5).
\]

Figure \ref{fig:tent construction} implements the algorithm described in Lemma \ref{lem:existence of M_T}. There are two possible orderings of arc endpoints at each of vertices $v_3$, $v_4$, and $v_5$. The figure displays one possible collection of choices.

\begin{figure}
\begin{center}
\begin{tikzpicture}[
  scale=0.85,
  bv/.style={circle, fill=black, minimum size=5pt, inner sep=0pt},
  ip/.style={circle, fill=black, inner sep=1.1pt},
  ax/.style={dashed, gray, line width=0.6pt},
  font=\small
]
  \colorlet{c1}{blue}
  \colorlet{c2}{red}
  \colorlet{c3}{green!60!black}
  \colorlet{c4}{violet}

  \draw[axis,<->] (0.2,3) -- (14.8,3);
  \draw[ax]     (0.2,0) -- (14.8,0);

  \foreach \x in {1,...,14}{ \node[ip] at (\x,0) {}; }

  \draw[c1, line width=1.1pt] (2.5,3) -- (2,0)  -- (1.5,-0.5) -- (1,0)  -- (1,3);
  \draw[c2, line width=1.1pt] (5,3)   -- (4,0)  -- (3.5,-0.5) -- (3,0)  -- (2.5,3);
  \draw[c1, line width=1.1pt] (8.5,3) -- (7,0)  -- (6,-1)     -- (5,0)  -- (5,3);
  \draw[c3, line width=1.1pt] (8.5,3) -- (8,0)  -- (7,-1)     -- (6,0)  -- (5,3);
  \draw[c4, line width=1.1pt] (12,3)  -- (12,0) -- (10.5,-1.5)-- (9,0)  -- (8.5,3);
  \draw[c2, line width=1.1pt] (12,3)  -- (11,0) -- (10.5,-0.5)-- (10,0) -- (8.5,3);
  \draw[c3, line width=1.1pt] (14,3)  -- (14,0) -- (13.5,-0.5)-- (13,0) -- (12,3);

  \begin{scope}[line width=1.1pt,
      decoration={markings, mark=at position 0.5 with {\arrow{Stealth[length=2.6mm]}}}]
    \draw[c1, postaction={decorate}] (2,0)  -- (1.5,-0.5);
    \draw[c2, postaction={decorate}] (4,0)  -- (3.5,-0.5);
    \draw[c1, postaction={decorate}] (7,0)  -- (6,-1);
    \draw[c3, postaction={decorate}] (8,0)  -- (7,-1);
    \draw[c4, postaction={decorate}] (12,0) -- (10.5,-1.5);
    \draw[c2, postaction={decorate}] (11,0) -- (10.5,-0.5);
    \draw[c2] (10.5,-0.5) -- (10,0); 
    \draw[c3, postaction={decorate}] (14,0) -- (13.5,-0.5);
  \end{scope}
  \foreach \x/\lab in {1/1, 2.5/2, 5/3, 8.5/4, 12/5, 14/6}{
    \node[bv] at (\x,3) {};
    \node[above=3pt] at (\x,3) {$v_{\lab}$};
  }
\end{tikzpicture}
\end{center}
\caption{Constructing $M_T$ using the algorithm from Lemma \ref{lem:existence of M_T}.}\label{fig:tent construction}
\end{figure}
\end{example}

Since every arc of $C_T$ is clockwise, curve depth $d_{C_T}(F)$ of a face $F$ of $M_T$ equals the number of arcs of $C_T$ enclosing $F$; in particular $d_{C_T}(F)\geq 0$. We now show that integral depth on $M_T$ and curve depth with respect to $C_T$ coincide.

\begin{lemma}\label{lem: adjacent smaller circle depth}
Let $M_T$ and $C_T$ be as above, and let $F$ be a face of $M_T$. Then either $F$ is the unbounded face or it shares an edge with a face of smaller curve depth.   
\end{lemma}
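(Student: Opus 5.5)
Since every arc of $C_T$ is clockwise, $d_{C_T}(F)$ is the number of arcs enclosing $F$. Crossing an edge of $M_T$ changes this count by exactly one, because each edge carries a single arc and the two faces on either side of it differ only in whether that arc encloses them. So it suffices to show: if $F\neq U$, some edge on the boundary of $F$ lies on an arc $\gamma$ that encloses $F$. Crossing that edge moves us to the face just outside $\gamma$, whose depth is $d_{C_T}(F)-1$.

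If $F\neq U$, then $F$ is enclosed by at least one arc, since a face enclosed by no arc lies in the region connected to $U$ and hence equals $U$. Among the arcs enclosing $F$, I choose an \emph{innermost} one, $\gamma$. Here $\gamma$ encloses $F$ and no other arc enclosing $F$ lies inside the region bounded by $\gamma$ and the axis. This notion makes sense because each arc $\gamma$ cuts off a disk $D_\gamma$ bounded by $\gamma$ and an axis segment $[v_a,v_b]$. The arcs enclosing $F$ are exactly those with $F\subset D_\gamma$. I take $\gamma$ minimizing $D_\gamma$ under inclusion, or simply minimizing the number of faces in $D_\gamma$.

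The key step is to show that $\partial F$ contains an edge of $\gamma$. I would argue as follows. Consider the faces of $M_T$ inside $D_\gamma$ that are adjacent to $\gamma$. Walk in the dual graph from $F$ toward $\gamma$: pick a point $p\in F$ and a path inside $D_\gamma$ from $p$ to an interior point of an edge of $\gamma$. Choose it to cross as few edges as possible, and let the first edge crossed lie on an arc $\delta$. If $\delta=\gamma$, we are done. Otherwise $\delta\neq\gamma$, and since the path starts in $F$ and leaves it across $\delta$, the arc $\delta$ separates $F$ from some part of $D_\gamma$. The obstacle is to show that this forces $\delta$ to enclose $F$ with $D_\delta\subsetneq D_\gamma$, contradicting minimality. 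Arcs can cross, so $D_\delta\not\subset D_\gamma$ in general, and this is where conditions (a)--(c) matter.

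To handle this, I would use a cleaner characterization. $F$ lies in the intersection $R=\bigcap_{\gamma\ni F} D_\gamma$ of all disks of arcs enclosing $F$, minus the other arcs. I would show that $\partial F$ meets the boundary of $R$ along an edge of some enclosing arc. Suppose instead that every edge of $\partial F$ lies on an arc $\delta$ not enclosing $F$, so $F$ lies outside $D_\delta$ locally near that edge, and the face across the edge is enclosed by $\delta$ and has depth one larger. Every boundary edge of $F$ then goes up in depth. Consider the arc $\delta$ that appears on $\partial F$ at a point of $\partial F$ closest to the boundary axis (or leftmost/topmost in a suitable sense). Using that arcs are clockwise, made of boundary-meeting curves, and pairwise cross at most once (condition (c)), together with condition (b) at boundary vertices, I expect to derive that the complement of $F$'s closure near the axis forces $F$ to be enclosed by no arc, i.e.\ $F=U$. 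Equivalently, a bounded face entirely surrounded by arcs that each exclude it would have to be a region outside all of them yet cut off from $U$. Such a face is a bounded component of the complement of $\bigcup D_\delta$, and because each pair of disks meets in a connected (single-crossing) fashion, that complement has no bounded components. Establishing this last topological fact (no ``holes'' in a union of pairwise once-crossing half-disks) is the main obstacle, and I would prove it by induction on the number of arcs, adding one disk at a time and using that a new half-disk meets each previous one in a connected set.
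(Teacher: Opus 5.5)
\textbf{There is a genuine gap.} Your opening reduction is correct. Since each edge of $M_T$ carries exactly one arc, it suffices to show that a face $F\neq U$ has a boundary edge lying on an arc that encloses $F$. But that statement is the whole content of the lemma, and the proposal does not prove it.

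\begin{itemize}
\item The innermost-arc route stalls exactly where you say. The disks enclosing $F$ are not nested, so minimality gives no way to force the first arc crossed to enclose $F$.
\item The ``closest to the axis'' step is only an expectation, not an argument.
\item The final reduction to ``a union of half-disks whose arcs pairwise cross at most once has no holes'' is left unproven.
\item The induction you propose for that claim does not work as stated. Knowing that the new half-disk meets each earlier one in a connected set does not exclude holes: three round disks overlapping pairwise around an uncovered center have connected pairwise intersections. Conversely, the new half-disk may meet the union of the earlier ones in a disconnected set without creating any hole, for instance a wide half-disk overlapping two far-apart narrow ones. So the inductive step needs an ingredient you have not supplied.
\item Your early claim that a face enclosed by no arc must be $U$ is this same unproven fact, applied to all arcs at once.
\end{itemize}

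The missing idea is the paper's combinatorial monotonicity argument, which uses the arc orientations rather than topology.

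Suppose $F$ is bounded and every neighbor of $F$ is deeper. Each arc is clockwise, so its deeper side is on its right. Hence $F$ lies to the left of every arc along its boundary, and the arc orientations traverse a boundary component of $F$ counterclockwise. Label the boundary edges $e_1,\ldots,e_s$ in this order, with $e_t$ on an arc $(v_{a_t},v_{b_t})$ and $a_t<b_t$.

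\begin{itemize}
\item Passing a boundary vertex takes you from the head of one arc to the tail of the next, so $b_{t+1}=a_t<b_t$.
\item Passing a crossing, condition (c) forces the endpoints to interleave. Since $F$ lies outside both disks, this gives $a_{t+1}<a_t<b_{t+1}<b_t$.
\end{itemize}

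So $b_t$ strictly decreases as you go around $\partial F$. For an interior face this is impossible around a closed loop. For a boundary face it yields $b_s<b_1$, whereas the counterclockwise order forces $b_1<a_s<b_s$. This is exactly the ingredient your hole-freeness claim would need.
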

\begin{proof}
Since each edge of $M_T$ carries exactly one curve of $C_T$, faces separated by an edge have curve depths that differ by one. Further, since the curves of $C_T$ are oriented clockwise, the face of larger curve depth is to the right. Say $F$ is a bounded face of $M_T$ such that every neighboring face has a larger curve depth. Then the edges of $M_T$ bounding $F$ either have a  counterclockwise loop component (if $F$ is an interior face) or a counterclockwise arc component (if $F$ is a boundary face). 
    
    First consider the case that $F$ is an interior face. Let $e_1, \ldots, e_s$ be the edges of the counterclockwise loop component of $M_T$ bounding $F$ read in counterclockwise order where we define $e_{s+1}=e_1$. Each edge $e_t$ carries an arc $(v_{a_t}, v_{b_t})$ of $C_T$ with $a_t<b_t$. Adjacent edges $e_t$ and $e_{t+1}$ either carry arcs that meet at a boundary vertex in which case $a_{t+1}<b_{t+1}=a_t<b_t$ or arcs that cross in which case $a_{t+1}<a_{t}<b_{t+1}<b_t$. We therefore have $b_{s+1}<b_1$. But $b_{s+1}=b_1$, so this is not possible.

    Now say $F$ is a boundary face, and let $e_1, \ldots, e_s$ be the edges of the counterclockwise arc component of $M_T$ bounding $F$ read in counterclockwise order where $e_1$ is the initial (leftmost) edge. Using the same arc notation and inequalities as above, we have $b_s<b_1$. However, the counterclockwise order of the edges bounding $F$ means $v_{b_1}$ is to the left of $v_{a_s}$. Therefore, we have $b_1<a_s<b_s$ which is not possible.
\end{proof}

\begin{lemma}\label{lem: graph depth is circle depth}
Let $M_T$ be as above, and let $F$ be a face of $M_T$. Then $d(F)=d_{C_T}(F)$.
\end{lemma}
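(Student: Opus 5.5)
We prove the two inequalities $d_{C_T}(F)\le d(F)$ and $d(F)\le d_{C_T}(F)$ separately.

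For $d_{C_T}(F)\le d(F)$, the work is already done. As recorded in Step (4) of Construction~\ref{def:tableau matching}, $C_T$ is a balanced curve system on $M_T$. So Lemma~\ref{lem: depth bound} gives $|d_{C_T}(F)|\le d(F)$. Since every arc of $C_T$ is clockwise, $d_{C_T}(F)\ge 0$, and hence $d_{C_T}(F)\le d(F)$.

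For $d(F)\le d_{C_T}(F)$, we build a path in $M_T^*$ of length exactly $d_{C_T}(F)$ from $F$ to $U$ by induction on curve depth. The inductive step comes from Lemma~\ref{lem: adjacent smaller circle depth}: if $F\neq U$, then $F$ shares an edge with a face $F'$ with $d_{C_T}(F')<d_{C_T}(F)$. Each edge of $M_T$ carries exactly one curve of $C_T$, so, as in the proof of Lemma~\ref{lem: depth bound}, the curve depths of adjacent faces differ by exactly one. Hence $d_{C_T}(F')=d_{C_T}(F)-1$.

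Now iterate. Starting from $F$, we obtain a sequence of faces $F=F_d,F_{d-1},\ldots$ with strictly decreasing curve depth, each adjacent to the next. Curve depth is a nonnegative integer and drops by one at each step. Lemma~\ref{lem: adjacent smaller circle depth} keeps producing a next face until we reach $U$. The only other way the process could end is at a face of depth $0$ other than $U$, but such a face would still have a neighbor of depth $-1$, which is impossible. So the sequence ends at $U$ after exactly $d_{C_T}(F)$ steps, using $d_{C_T}(U)=0$. Reversing it gives a path from $U$ to $F$ in $M_T^*$ of length $d_{C_T}(F)$, so $d(F)\le d_{C_T}(F)$.

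Equivalently, this is a strong induction on $d_{C_T}(F)$ with base case $F=U$. Both the hardest step, the existence of a shallower neighbor, and the fact that curve depth is nonnegative are already established before this lemma. What remains to check is that the unbounded face is the only face of curve depth $0$ with no shallower neighbor, and this follows directly from Lemma~\ref{lem: adjacent smaller circle depth} combined with nonnegativity.
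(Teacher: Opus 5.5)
Your proof is correct and follows essentially the same route as the paper. You get $d_{C_T}(F)\le d(F)$ from Lemma~\ref{lem: depth bound} applied to the balanced, clockwise system $C_T$, and the reverse inequality by induction on curve depth, using Lemma~\ref{lem: adjacent smaller circle depth} and the fact that faces sharing an edge differ in curve depth by exactly one.
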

\begin{proof}
    By Lemma~\ref{lem: depth bound} applied to the curve system $C_T$, $d_{C_T}(F)\leq d(F)$ for every face $F$, so it suffices to prove the reverse inequality. We induct on $d_{C_T}(F)$. If $d_{C_T}(F)=0$, then Lemma~\ref{lem: adjacent smaller circle depth} forces $F$ to be the unbounded face, so $d(F)=0$. If $d_{C_T}(F)=k>0$, then by Lemma~\ref{lem: adjacent smaller circle depth} $F$ shares an edge with a face $F'$ of smaller curve depth; since faces separated by an edge differ by one in curve depth, $d_{C_T}(F')=k-1$. By induction $d(F')\leq k-1$, and crossing the shared edge yields a path from $U$ to $F$ of length at most $k$. Hence $d(F)\leq k=d_{C_T}(F)$.
\end{proof}

Finally, we make some observations about the colors and cyclic ordering of arcs of $C_T$ meeting at boundary vertices. Consider some boundary vertex of $M_T$. Denote the outgoing arc colors of $C_T$ by $h_1, \ldots, h_p$ and incoming arc colors by $r_1, \ldots, r_q$ where the colors read cyclically around the boundary vertex from left to right below the boundary axis is $h_p, \ldots, h_1, r_1,\ldots, r_q$ as shown in Figure \ref{fig: boundary arcs in tableau matching}. 

\begin{figure}[h]
\begin{tikzpicture}[scale=.5]

  \tikzset{
    arc/.style={
      draw, line width=1.1pt,
      postaction={decorate},
      decoration={markings, mark=at position 0.55 with {\arrow{Stealth[length=2.6mm]}}}
    }
  }

  \draw[axis, <->] (-5.2,0) -- (5.2,0);

  \draw[arc] ( 4.0,-1.1) .. controls ( 2.3,-1.5) .. (0,0);   
  \draw[arc] ( 1.7,-3.0) .. controls ( 0.9,-2.4) .. (0,0);   

  \draw[arc] (0,0) .. controls (-2.3,-1.5) .. (-4.0,-1.1);   
  \draw[arc] (0,0) .. controls (-0.9,-2.4) .. (-1.7,-3.0);   

  \fill (0,0) circle (2.5pt);

  \node at ( 2.55,-1.75) {$\vdots$};
  \node at (-2.55,-1.75) {$\vdots$};

  \node[anchor=west] at ( 4.10,-1.10) {$r_q$};
  \node[anchor=west] at ( 1.85,-3.10) {$r_1$};
  \node[anchor=east] at (-4.10,-1.10) {$h_p$};
  \node[anchor=east] at (-1.85,-3.10) {$h_1$};

\end{tikzpicture}
\caption{Arc colors at a boundary vertex of a colored noncrossing matching.}\label{fig: boundary arcs in tableau matching}
\end{figure}

\begin{lemma}\label{lem:boundary arc alternating}
Using the notation introduced above, we have $|p-q|\leq 1$. Moreover, the arc colors satisfy one of the following: 
\begin{enumerate}[leftmargin=6cm] 
    \item[(A)] $r_1>h_1>r_2>h_2\cdots$ \;\; with $p\leq q$,
    \item[(B)] $r_1<h_1<r_2<h_2\cdots$ \;\; with  $p\leq q$,
    \item[(C)] $h_1>r_1>h_2>r_2\cdots$ \;\; with $q\leq p$,
    \item[(D)] $h_1<r_1<h_2<r_2\cdots$ \;\; with $q\leq p$.
\end{enumerate}   
\end{lemma}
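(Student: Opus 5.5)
The plan is to read the endpoint data at a fixed boundary vertex $v_i$ directly off the tableau $T$, and then combine it with condition (b) of Step (3) in Construction~\ref{def:tableau matching}.

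\textbf{Step 1: endpoints from the rows containing $i$.} Let $R\subseteq\{1,\ldots,n\}$ be the set of rows of $T$ containing $i$, so $|R|=k_i$. By row-strictness, $i$ occurs at most once in each row, so $R$ is an honest set. By Step (1), $v_i$ carries a color-$c$ endpoint exactly when $|R\cap\{c,c+1\}|=1$. The endpoint is an in-endpoint when $c\in R$ and $c+1\notin R$, and an out-endpoint when $c+1\in R$ and $c\notin R$.

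\textbf{Step 2: the colors alternate in type.} Decompose $R$ into maximal runs of consecutive integers $[s_1,t_1],\ldots,[s_r,t_r]$, where $s_{u+1}\geq t_u+2$. Each run $[s,t]$ contributes:
\begin{itemize}
\item an out-endpoint of color $s-1$, provided $s>1$;
\item an in-endpoint of color $t$, provided $t<n$.
\end{itemize}
It contributes nothing else. Listing all endpoint colors at $v_i$ in increasing order gives
$$s_1-1<t_1<s_2-1<t_2<\cdots,$$
with the first term omitted if $s_1=1$ and the last omitted if $t_r=n$. The inequalities hold because $s-1<t$ within a run and $t_u<s_{u+1}-1$ between runs. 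So the sorted colors strictly alternate between out-type and in-type. In particular the numbers $p$ of out-arcs and $q$ of in-arcs satisfy $|p-q|\leq 1$.

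\textbf{Step 3: apply condition (b).} Below the axis, read from left to right, the arcs at $v_i$ appear in the order $h_p,\ldots,h_1,r_1,\ldots,r_q$. Condition (b) offers two options.

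\emph{First option:} out-arc colors increase and in-arc colors decrease left to right. Then $h_1>h_2>\cdots$ and $r_1>r_2>\cdots$, so both lists are sorted decreasingly by index. Merging them with the alternation from Step 2, the largest color is either $r_1$ or $h_1$, and the types alternate from there.
\begin{itemize}
\item If the largest color is in-type, we get $r_1>h_1>r_2>h_2>\cdots$. Since the sequence starts with an in-color and alternates, $q\in\{p,p+1\}$, giving (A).
\item If the largest color is out-type, we get $h_1>r_1>h_2>\cdots$ with $p\in\{q,q+1\}$, giving (C).
\end{itemize}

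\emph{Second option:} out-arc colors decrease and in-arc colors increase left to right. Then both lists are increasing in index, and the same merging argument, now started from the smallest color, gives (B) or (D).

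\textbf{Main obstacle.} The only delicate point is Step 2: checking that consecutive runs genuinely separate the colors, so that $t_u<s_{u+1}-1$ is strict and no color appears twice at $v_i$. This follows from maximality of the runs. The remaining work is bookkeeping about which index convention the labels $h$ and $r$ use in Figure~\ref{fig: boundary arcs in tableau matching}.
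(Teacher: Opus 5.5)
Your proposal is correct and follows the same route as the paper. Step (1) gives at most one endpoint per color, with its in/out type set by whether $i$ lies in row $c$ or row $c+1$; the colors sorted by value then alternate in type, which your maximal-runs decomposition of the row set justifies more carefully than the paper's one-line remark; and condition (b) of Step (3) forces one of (A)--(D), where your merging argument supplies the bookkeeping the paper leaves implicit.
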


\begin{proof}
Let $v_j$ be a boundary vertex of $M_T$. By Step (1) of Construction~\ref{def:tableau matching}, $v_j$ carries at most one
endpoint of each color, so there are no color repeats. The colors also alternate: an in-color
of $c$ means $j$ lies in row $c$ but not row $c+1$, and an out-color means the
reverse, so colors must necessarily switch between in and out. Hence $|p-q|\leq 1$.
The arrangement of arcs at boundary vertices described in Step (3b) then forces the
colors to form a sequence of one of the four listed types.
\end{proof}

\subsection{Forming a stranded web from a colored noncrossing matching}\label{subsection: stranded web}

From a colored noncrossing matching, we now build a web graph equipped with a stranding. In the figures illustrating this construction, color is schematic: distinct colors mark distinct arc colors $c \neq c'$ at a generic vertex or crossing, and do not refer to the fixed color assignments used in our worked examples.

\begin{construction}[Stranded web $G_T$]\label{def: G_T and S_T}
Let $T\in\mathcal{RST}_n(\vec{k})$. From a colored noncrossing matching $M_T$ for $T$ as in Construction \ref{def:tableau matching}, we build an oriented, weighted graph $G_T$ equipped with curve system $S_T$ as follows.

\begin{enumerate}
  \item \textbf{Crossings.}
    Replace a neighborhood of each crossing as shown in Figure~\ref{fig: resolution}.

    \begin{figure}[h]
                \begin{center}
                    \begin{tikzpicture}[scale=.65, line cap=round]

\begin{scope}[shift={(0,0)}]
  \crosscurve{red}{(1.5,-1.9)}{(-1.5,1.9)}{8}{0.66}   
  \crosscurve{blue}{(1.5,1.9)}{(-1.5,-1.9)}{8}{0.50}  
\end{scope}

\draw[line width=1.1pt,-{Stealth[length=3mm]},decorate,
      decoration={snake,amplitude=1mm,segment length=4mm,post length=2.5mm}]
   (3.1,0) -- (5.6,0);

\begin{scope}[shift={(8.6,0)}]
  \coordinate (U) at (0,0.95);  \coordinate (L) at (0,-0.95);
  \coordinate (UL) at (-1.4,2.0); \coordinate (UR) at (1.4,2.0);
  \coordinate (LL) at (-1.4,-2.0);\coordinate (LR) at (1.4,-2.0);
  \draw[black,line width=0.7pt] (UL)--(U);
  \draw[black,line width=0.7pt] (U)--(UR);
  \draw[black,line width=0.7pt] (L)--(LL);
  \draw[black,line width=0.7pt] (LR)--(L);
  \draw[black,line width=0.7pt] (L)--(U);
  \strandA{red}   
  \strandB{blue}  
  \fill (U) circle (2.5pt); \fill (L) circle (2.5pt);
  \node[depthcolor] at (0,-1.85) {$d-1$};
\node[depthcolor] at (-0.95,0) {$d$}; \node[depthcolor] at (0.95,0) {$d$};
\node[depthcolor] at (0,1.75) {$d+1$};
\end{scope}

\end{tikzpicture}
\caption{Resolving crossings in $M_T$ to create web $G_T$ stranded by $S_T$ with relative strand depths $d-1, d, d+1$.} \label{fig: resolution}
     \end{center}
\end{figure}  

  \item \textbf{Boundary vertices.}
    Replace a neighborhood of each boundary vertex with more than one incident edge by $p+q-1$ consecutive vertical edges (a trunk) together with edges on the left and right of the trunk (branches) corresponding to the outgoing and incoming arcs of $C_T$ meeting the trunk in the same cyclic order as in $M_T$ and ordered so that the branch colors form a monotone sequence from bottom to top, as shown in Figure~\ref{fig: trunks and branches}.

\begin{figure}[htbp]
    \centering
    
    \tikzset{
        strand/.style={line width=1.1pt, rounded corners=3mm, decoration={markings, mark=at position 0.75 with {\arrow{Stealth[length=2.6mm]}}}, postaction={decorate}}
    }

    \begin{subfigure}[b]{0.45\textwidth}
        \centering
        \resizebox{0.6\linewidth}{!}{
            \begin{tikzpicture}[line cap=round, line join=round]
                \node at (0, 1.7) {$\vdots$};
                \draw[line width=0.7pt] (0, 1.4) -- (0, 1.2);
                
                \draw[line width=0.7pt] (0, 0)--(0, 1.2);                 
                \draw[line width=0.7pt]  (1.2, -0.2) --(0,0);              
                \draw[line width=0.7pt] (0, -1)--(0,0);                  
          \draw[line width=0.7pt] (0, -1) -- (-1.2, -1.8);           
                \draw[line width=0.7pt] (1.2, -1.8) --(0, -1) ;             
                \fill (0,0) circle (2.5pt);
                \fill (0,-1) circle (2.5pt);

                \draw[violet, strand] (1.2, -0.05)  -- (0.15, 0.05) -- (0.15, 1.2);
                \draw[red, strand] (1.2, -1.65) -- (0.08, -0.95) -- (0.08, 1.2);
                \draw[blue, strand] (-0.08, 1.2) -- (-0.08, -0.95) -- (-1.2, -1.65) ;
                
               \node[depthcolor] at (0,-1.5) {$d-1$};
\node[depthcolor] at (-0.6,-0.25) {$d$}; \node[depthcolor] at (0.58,-0.62) {$d$};
\node[depthcolor] at (0.72,0.72) {$d+1$}; 
                \path (-1.8, -2.2) rectangle (1.8, 2.2);
            \end{tikzpicture}
        }
        \caption{$r_1$ extremal}
    \end{subfigure}
    \begin{subfigure}[b]{0.45\textwidth}
        \centering
        \resizebox{0.6\linewidth}{!}{
            \begin{tikzpicture}[line cap=round, line join=round]
                \node at (0, 1.7) {$\vdots$};
                \draw[line width=0.7pt] (0, 1.4) -- (0, 1.2);
                
                \draw[line width=0.7pt] (0, 1.2) -- (0, 0);                 
                \draw[line width=0.7pt] (0, 0) -- (-1.2, -0.2);             
                \draw[line width=0.7pt] (0, 0) -- (0, -1);                  
                \draw[line width=0.7pt] (0, -1) -- (-1.2, -1.8);            
                \draw[line width=0.7pt] (1.2, -1.8) --(0, -1) ;             
                \fill (0,0) circle (2.5pt);
                \fill (0,-1) circle (2.5pt);

                \draw[green!60!black, strand] (-0.15, 1.2) -- (-0.15, 0.05) -- (-1.2, -0.05);
            \draw[red, strand] (1.2, -1.65) -- (0.08, -0.95) -- (0.08, 1.2);
                \draw[blue, strand] (-0.08, 1.2) -- (-0.08, -0.95) -- (-1.2, -1.65) ;

                \node[depthcolor] at (0,-1.5) {$d-1$};
\node[depthcolor] at (0.6,-0.25) {$d$}; \node[depthcolor] at (-0.58,-0.62) {$d$};
\node[depthcolor] at (-0.75,0.72) {$d+1$};
                
                \path (-1.8, -2.2) rectangle (1.8, 2.2);
            \end{tikzpicture}
        }
        \caption{$h_1$ extremal}
    \end{subfigure}         
           \caption{The trunk and branches of $G_T$ stranded by $S_T$ replacing the neighborhood of a boundary vertex of $M_T$ as shown in Figure \ref{fig: boundary arcs in tableau matching} together with relative strand depths $d-1, d, d+1$}\label{fig: trunks and branches}
          \end{figure}

  \item \textbf{Arc interiors.}
    Treat every remaining portion of an edge, including a boundary vertex with a single incident edge, as in Figure~\ref{fig: easy web conversion}.

    \begin{figure}[h]
\begin{tikzpicture}[scale=.6, line cap=round, line width=1.1pt, >=Stealth]
 
  \tikzset{
    midarrow/.style={
      postaction={decorate},
      decoration={markings, mark=at position #1 with {\arrow{Stealth[length=2.6mm]}}}
    }
  }
 
  \draw[blue, line width=1.1pt, midarrow=0.5]
    (1.8,0.35) .. controls (0.5,-0.25) and (-0.5,-0.25) .. (-1.8,0.3);
 
  \draw[black, line width=1.1pt, -{Stealth[length=3mm]}, decorate,
        decoration={snake, amplitude=1mm, segment length=4mm, post length=2.5mm}]
    (3.1,0.05) -- (5.2,0.12);
 
  \draw[black, line width=1.1pt]
    (10.0,0.55) .. controls (8.7,-0.05) and (7.7,-0.05) .. (6.4,0.5);
  \draw[blue, line width=1.1pt, midarrow=0.38]
    (10.0,0.35) .. controls (8.7,-0.25) and (7.7,-0.25) .. (6.4,0.3);

  \node[depthcolor] at (8.2,0.95) {$d$};
  \node[depthcolor] at (8.2,-0.7) {$d-1$};
 
\end{tikzpicture}
    \caption{Converting arc interiors to stranded web edge interiors, with relative strand depths $d-1$ and $d$.}\label{fig: easy web conversion}
\end{figure}

  \item \textbf{Orientation and weights.}
    Orient each edge of $G_T$ in the direction of its largest arc color in $S_T$, and weight it by the resulting alternating sum.
\end{enumerate}
\end{construction}

\begin{example}
In Figure \ref{fig:ex of matching stranding}, we see a smoothed version of the colored noncrossing matching from Figure \ref{fig:tent construction} and the stranded web built according to Construction~\ref{def: G_T and S_T}.
\begin{figure}[h] 
 \tikzset{
  strand/.style={line width=1.1pt, rounded corners=2.2mm,
      decoration={markings, mark=at position #1 with {\arrow{Stealth[length=2.6mm]}}},
      postaction={decorate}},
  arc/.style={line width=1.1pt,
      decoration={markings, mark=at position #1 with {\arrow{Stealth[length=2.6mm]}}},
      postaction={decorate}},
  graph edge/.style={black, line width=0.7pt, rounded corners=0.6mm,
      decoration={markings, mark=at position #1 with {\arrow{Stealth[length=2.6mm]}}},
      postaction={decorate}},
  bdry/.style={black, dashed, very thick},
  lab/.style={font=\small},
}

\begin{center}
 \begin{tikzpicture}[line cap=round,line join=round]
 
\begin{scope}[yshift=0cm]
  \foreach \i/\x in {1/0,2/2,3/4,4/6,5/8,6/10}{ \coordinate (m\i) at (\x,0); }
  \draw[axis, <->] (-1.2,0) -- (11.2,0);
  \draw[blue, arc=0.55] (m2) .. controls (1.6,-1.4) and (0.4,-1.4) .. (m1);
  \draw[red, arc=0.55]  (m3) .. controls (3.6,-1.4) and (2.4,-1.4) .. (m2);
  \draw[green!60!black, arc=0.70] (m4) .. controls (5.5,-1.9) and (4.7,-0.7) .. (m3);
  \draw[blue, arc=0.35] (m4) .. controls (5.3,-0.7) and (4.5,-1.9) .. (m3);
  \draw[red, arc=0.55]  (m5) .. controls (7.6,-1.4) and (6.4,-1.4) .. (m4);
  \draw[violet, arc=0.55] (m5) .. controls (8.2,-2.6) and (6.5,-2.6) .. (m4);
  \draw[green!60!black, arc=0.55] (m6) .. controls (9.6,-1.5) and (8.4,-1.5) .. (m5);
  \foreach \i in {1,...,6}{ \fill (m\i) circle (2.5pt); }
\end{scope}
 \end{tikzpicture}
 
 \begin{tikzpicture}[line cap=round,line join=round]
\begin{scope}[yshift=-5.6cm,scale=1.15]
  \coordinate (v1) at (0,0);     \coordinate (v2) at (1.91,0);
  \coordinate (v3) at (3.33,0);  \coordinate (v4) at (5.85,0);
  \coordinate (v5) at (8.04,0);  \coordinate (v6) at (11.81,0);
  \coordinate (A) at (1.96,-1.23);  \coordinate (B) at (3.41,-0.80);
  \coordinate (C) at (3.31,-1.58);  \coordinate (D) at (4.74,-1.26);
  \coordinate (F) at (4.72,-2.13);  \coordinate (Et) at (5.84,-1.08);
  \coordinate (Eb) at (5.95,-1.52); \coordinate (G) at (5.84,-2.14);
  \coordinate (H) at (8.08,-1.60);  \coordinate (I) at (8.27,-2.53);
 
  \draw[axis, <->] (-1.1,0.04) -- (12.9,0.04);
 
  \draw[blue, line width=1.1pt, line join=round, rounded corners=2.0pt,
        decoration={markings, mark=at position 0.657 with {\arrow{Stealth[length=2.6mm]}}}, postaction={decorate}]
     (1.910,0.000) -- (1.852,-0.163) -- (1.889,-1.073) -- (1.790,-1.200) -- (0.101,-0.140) -- (0.000,0.000);
  \draw[red, line width=1.1pt, line join=round, rounded corners=2.0pt,
        decoration={markings, mark=at position 0.536 with {\arrow{Stealth[length=2.6mm]}}}, postaction={decorate}]
     (3.330,0.000) -- (3.281,-0.166) -- (3.329,-0.647) -- (3.325,-0.950) -- (3.266,-1.413) -- (3.139,-1.603) -- (2.099,-1.333) -- (2.018,-1.067) -- (1.981,-0.157) -- (1.910,0.000);
  \draw[blue, line width=1.1pt, line join=round, rounded corners=2.0pt,
        decoration={markings, mark=at position 0.612 with {\arrow{Stealth[length=2.6mm]}}}, postaction={decorate}]
     (5.850,0.000) -- (5.914,-0.161) -- (5.906,-0.921) -- (5.693,-1.170) -- (4.908,-1.298) -- (4.671,-1.418) -- (4.659,-1.969) -- (4.547,-2.132) -- (3.435,-1.699) -- (3.395,-1.430) -- (3.454,-0.967) -- (3.459,-0.634) -- (3.411,-0.153) -- (3.330,0.000);
  \draw[green!60!black, line width=1.1pt, line join=round, rounded corners=2.0pt,
        decoration={markings, mark=at position 0.738 with {\arrow{Stealth[length=2.6mm]}}}, postaction={decorate}]
     (5.850,0.000) -- (5.654,-0.158) -- (5.646,-0.918) -- (5.801,-1.193) -- (5.863,-1.439) -- (5.862,-1.645) -- (5.800,-1.992) -- (5.679,-2.204) -- (4.879,-2.196) -- (4.789,-1.972) -- (4.801,-1.421) -- (4.568,-1.269) -- (3.540,-0.914) -- (3.200,-0.660) -- (3.152,-0.179) -- (3.330,0.000);
  \draw[red, line width=1.1pt, line join=round, rounded corners=2.0pt,
        decoration={markings, mark=at position 0.494 with {\arrow{Stealth[length=2.6mm]}}}, postaction={decorate}]
     (8.040,0.000) -- (7.979,-0.162) -- (8.011,-1.442) -- (7.918,-1.659) -- (6.107,-1.591) -- (5.989,-1.407) -- (5.927,-1.161) -- (6.036,-0.922) -- (6.044,-0.162) -- (5.850,0.000);
  \draw[violet, line width=1.1pt, line join=round, rounded corners=2.0pt,
        decoration={markings, mark=at position 0.524 with {\arrow{Stealth[length=2.6mm]}}}, postaction={decorate}]
     (8.040,0.000) -- (8.239,-0.155) -- (8.271,-1.435) -- (8.048,-1.770) -- (8.174,-2.386) -- (8.102,-2.569) -- (5.988,-2.230) -- (5.928,-2.015) -- (5.990,-1.668) -- (5.737,-1.470) -- (5.675,-1.224) -- (5.776,-0.919) -- (5.784,-0.159) -- (5.850,0.000);
  \draw[green!60!black, line width=1.1pt, line join=round, rounded corners=2.0pt,
        decoration={markings, mark=at position 0.323 with {\arrow{Stealth[length=2.6mm]}}}, postaction={decorate}]
     (11.810,0.000) -- (11.718,-0.146) -- (8.438,-2.490) -- (8.302,-2.360) -- (8.176,-1.744) -- (8.141,-1.438) -- (8.109,-0.158) -- (8.040,0.000);

  \draw[graph edge=0.55] (A) to[out=150,in=-82] (v1);  
  \draw[graph edge=0.5] (A) -- (v2);                    
  \draw[graph edge=0.5] (C) -- (A);                     
  \draw[graph edge=0.5] (B) -- (v3);                    
  \draw[graph edge=0.5] (B) -- (C);                     
  \draw[graph edge=0.5] (D) -- (B);                     
  \draw[graph edge=0.5] (F) -- (C);                     
  \draw[graph edge=0.5] (F) -- (D);                     
  \draw[graph edge=0.5] (Et) -- (D);                    
  \draw[graph edge=0.5] (G) -- (F);                     
  \draw[graph edge=0.5] (Et) -- (v4);                   
  \draw[graph edge=0.6] (Eb) -- (Et);                   
  \draw[graph edge=0.5] (G) -- (Eb);                    
  \draw[graph edge=0.5] (H) -- (Eb);                    
  \draw[graph edge=0.5] (I) -- (G);                     
  \draw[graph edge=0.5] (v5) -- (H);                    
  \draw[graph edge=0.5] (H) -- (I);                     
  \draw[graph edge=0.5] (v6) to[out=212,in=-18] (I);    
 
  \foreach \p in {v1,v2,v3,v4,v5,v6,A,B,C,D,Et,Eb,F,G,H,I}{ \fill (\p) circle (2.5pt); }
 
  \node[lab] at (0.95,-0.55) {$1$};   \node[lab] at (2.55,-1.75) {$2$};
  \node[lab] at (2.20,-0.34) {$1$};   \node[lab] at (3.2,-0.25) {$2$};
  \node[lab] at (4,-0.75) {$3$};   \node[lab] at (5.1,-0.95) {$1$};
  \node[lab] at (6.12,-0.30) {$2$};   \node[lab] at (3.25,-1.25) {$1$};
  \node[lab] at (4.45,-1.5) {$2$};   \node[lab] at (5.7,-1.8) {$1$};
  \node[lab] at (6.15,-1.2) {\small{$3$}};   
  \node[lab] at (7.00,-1.3) {$2$};
  \node[lab] at (3.8,-2.) {$1$};   \node[lab] at (5.3,-2.35) {$3$};
  \node[lab] at (7.30,-2.65) {$4$};   \node[lab] at (7.85,-2.02) {$1$};
  \node[lab] at (7.75,-0.62) {$3$};   \node[lab] at (10,-1.65) {$3$};
\end{scope}
\end{tikzpicture}
    \end{center}
    \caption{A colored noncrossing matching and its corresponding stranded web.}\label{fig:ex of matching stranding}
\end{figure}

\end{example}

\begin{lemma}\label{lem: G_T is a stranded web}
Let $T\in\mathcal{RST}_n(\vec k)$. Then $G_T$ is an $\mathfrak{sl}_n$ web, and $S_T\in\mathcal{S}tr(G_T)$.
\end{lemma}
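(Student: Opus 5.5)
Our plan is to check the defining conditions locally, using the explicit pictures in Construction~\ref{def: G_T and S_T}, and to rely on Lemma~\ref{lem: alternating sum weight} for the edge conditions. We first verify that $G_T$ is a half-plane graph with univalent boundary vertices and trivalent interior vertices. Crossing resolutions (Figure~\ref{fig: resolution}) produce two trivalent vertices. Trunks (Figure~\ref{fig: trunks and branches}) produce a chain of trivalent vertices, each with one branch attached, and the top trunk edge meets the boundary vertex alone. Arc interiors contribute only edge interiors. We then check that the strands of $S_T$ are simple arcs on $G_T$ and that same-colored strands are disjoint. This is inherited from condition (a) of Step~(3) in Construction~\ref{def:tableau matching}: the local replacements only push arcs parallel into edges and never merge two arcs of the same color.

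Next comes the \emph{edge condition}: on each edge, the strand colors alternate in sign. There are three kinds of edges.
\begin{itemize}
\item Arc-interior edges carry a single strand, so the condition is trivial.
\item The middle edge of a crossing resolution carries two distinct colors $c\neq c'$ in opposite directions. This is automatically alternating.
\item Trunk edges carry the strands of several branches. The segment above the $t$-th trunk vertex carries the colors that, in the notation of Lemma~\ref{lem:boundary arc alternating}, form an initial segment of $r_1,h_1,r_2,h_2,\dots$ (or $h_1,r_1,\dots$). In-arcs and out-arcs run in opposite directions along the trunk. So by cases (A)--(D), once the colors are sorted, the signs alternate.
\end{itemize}
Because the branches are attached in monotone color order from bottom to top, each trunk segment's color set is a consecutive initial segment of this interleaved sequence. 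We expect this bookkeeping to be the main obstacle: we must show that sorting by color really produces alternating directions, which means matching the monotone order of Step~(2) of Construction~\ref{def: G_T and S_T} with the interleaving in Lemma~\ref{lem:boundary arc alternating}. With alternation in hand, Step~(4) orients each edge along its largest color and weights it by the alternating sum. Lemma~\ref{lem: alternating sum weight} then gives a weight in $\{1,\dots,n-1\}$ and validity of $S_T(e)$ on every edge.

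Finally, we check the vertex conditions. Translating through Theorem~\ref{thm:bijection}, each edge now carries a binary label $b(e)$ with $\alpha_c^\vee(b(e))=\alpha_c^\vee(S_T(e))$. At an interior vertex $v$, every strand through $v$ enters along one edge and leaves along another, and no strand ends at an interior vertex. Hence for each color $c$,
\[
\sum_{i}\sigma_v(e_i)\,\alpha_c^\vee(b(e_i))=0,
\]
since each strand contributes $+1$ and $-1$. So $b$ is a valid binary labeling, and the same flow computation as in the proof of Lemma~\ref{lem: flow condition on binary labels} forces the vector $\sum_i\sigma_v(e_i)b(e_i)$ to be constant. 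Its entries sum to $\sum_i\sigma_v(e_i)\ell_i$, and a constant vector with $n$ equal entries has entry sum divisible by $n$. This gives the congruence $\sum_i\sigma_v(e_i)\ell_i\equiv 0\pmod n$, so $G_T$ is an $\mathfrak{sl}_n$ web. Combined with the edge validity above, the bijection of Theorem~\ref{thm:bijection} gives $S_T\in\mathcal{S}tr(G_T)$.
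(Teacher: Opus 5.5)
Your proposal is correct. On the edge conditions it matches the paper, which simply asserts alternation ``by construction'' and then invokes Lemma~\ref{lem: alternating sum weight}. Your trunk bookkeeping is a correct filling-in of what the paper leaves implicit: each trunk segment carries an initial segment of the monotone, type-alternating sequence $r_1,h_1,r_2,\dots$ (or $h_1,r_1,\dots$), and in- and out-arcs run in opposite directions along the trunk. One small inaccuracy: the bottom trunk vertex carries two branches, not one, though it is still trivalent.

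The real difference is at interior vertices. The paper never passes to binary labels. It multiplies the per-color flow identity $\sum_i\sigma_v(e_i)\alpha_c^\vee(S_T(e_i))=0$ by $c$ and sums over $c$. Since Step~(4) defines $\ell_i=\sum_c c\,\alpha_c^\vee(S_T(e_i))$, this gives $\sum_i\sigma_v(e_i)\ell_i=0$ \emph{exactly}.

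Your route through the constant vector $\sum_i\sigma_v(e_i)b(e_i)$ yields only the congruence mod $n$. That suffices for the lemma as stated. However, the paper records the exact equality right afterward because Lemma~\ref{lem: flow condition on binary labels} requires it, and that lemma drives Lemmas~\ref{lem: crossing disagreement} and~\ref{lem: trunk disagreement}. You can recover exactness inside your framework: every edge is oriented along its largest color, so $b(e)=\sum_c\alpha_c^\vee(S_T(e))\lambda_c$ has last entry $0$. Hence your constant vector has last entry $0$ and is therefore $\vec 0$.

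Your detour also has two costs the paper's direct weighted sum avoids. It quietly uses that this $b(e)$ is a $0/1$ vector with $|b(e)|=\ell$, which is essentially the content of the proof of Lemma~\ref{lem: alternating sum weight}. It also borrows Theorem~\ref{thm:bijection} and the notion of a valid binary labeling before $G_T$ is known to be a web.

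Finally, your closing appeal to the bijection is unnecessary. Validity of a stranding is purely an edgewise condition, and you had already established it.
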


\begin{proof}

By construction, on each edge of $G_T$ the curves of $S_T$ alternate direction when ordered by color, so by Lemma~\ref{lem: alternating sum weight}, every edge of $G_T$ has a weight in $\{1,\ldots,n-1\}$ and is validly stranded by $S_T$. It remains to verify the interior vertex condition of Definition~\ref{def: webs} at each interior vertex of $G_T$.

Let $v$ be an interior vertex with incident edges $e_1, e_2, e_3$ of weights $\ell_1, \ell_2, \ell_3$. Every curve of $S_T$ incident to $v$ passes through it, entering along one incident edge and leaving along another. For every color $c$, we therefore have
$$\sum_{i=1}^3 \sigma_v(e_i)\,\alpha_c^\vee(S_T(e_i))=0.$$
Multiplying by $c$ and summing over colors gives $$\sum_{c=1}^{n-1} \sum_{i=1}^3 \sigma_v(e_i)\,\alpha_c^\vee(S_T(e_i))\,c \;=\; \sum_{i=1}^3 \sigma_v(e_i)\!\left(\sum_{c=1}^{n-1} \alpha_c^\vee(S_T(e_i))\,c\right) = 0.$$
By construction, edge $e_i$ has weight $\ell_i=\sum_{c=1}^{n-1}\alpha_c^\vee(S_T(e_i))\,c$, so  $\sum_{i=1}^3\sigma_v(e_i)\,\ell_i= 0$, and the interior vertex condition holds.
\end{proof}

Note that the proof gives $\sum_{i=1}^3 \sigma_v(e_i)\ell_i=0$ exactly, not merely
modulo $n$, at every interior vertex of $G_T$. Lemma~\ref{lem: flow condition on binary labels} therefore applies at every interior
vertex of $G_T$.

Next, we observe the stranding $S_T$ has corresponding tableau $T$.

\begin{theorem}\label{thm: monomial of S_T}
Let $T\in \mathcal{RST}_n(\vec{k})$. Then $T_{S_T}=T$.
\end{theorem}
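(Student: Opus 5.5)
The plan is to compare, boundary vertex by boundary vertex, the binary vector $\hat b_{S_T}(e_j)$ with the indicator vector of the rows of $T$ containing $j$. Let $\chi_j\in\{0,1\}^n$ be defined by $(\chi_j)_c=1$ exactly when $j$ lies in row $c$ of $T$. Since $T$ is row-strict, $j$ occurs at most once per row, so $|\chi_j|=k_j$. By Definition~\ref{def: tableau of stranding}, it suffices to show $\hat b_{S_T}(e_j)=\chi_j$ for every boundary edge $e_j$ of $G_T$. Then $x_c$ occurs in tensor factor $j$ of $x_{S_T}$ exactly when $j$ is in row $c$ of $T$, and sorting each row recovers $T$.

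First I would identify the simple strands on $e_j$. By Construction~\ref{def: G_T and S_T}, the boundary edge $e_j$ is either the single edge from step (3) or the top edge of the trunk from step (2). In both cases it carries exactly the arcs of $C_T$ incident to $v_j$, with the same colors and orientations. By Step (1) of Construction~\ref{def:tableau matching}, a color-$c$ arc is directed into $v_j$ (an in-endpoint) exactly when $(\chi_j)_c-(\chi_j)_{c+1}=1$, and out of $v_j$ exactly when this difference is $-1$. Measured relative to the direction into $v_j$, the strand data on $e_j$ therefore has $\alpha_c^\vee=(\chi_j)_c-(\chi_j)_{c+1}$ for all $c$.

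Next I would convert this into a statement about $\hat b_{S_T}(e_j)$. Suppose first that $e_j$ is directed into $v_j$, so $\sigma_{v_j}(e_j)=+1$ and $\hat b=b_{S_T}(e_j)$. By the remark after Theorem~\ref{thm:bijection}, $b_{S_T}(e_j)$ has the same $\alpha^\vee$ values as $\chi_j$. Two binary vectors with identical consecutive differences are equal or complementary, as they are determined up to the last entry. The last entry of $b_{S_T}(e_j)$ is $0$ exactly when $c_{\max}$ points with $e_j$, that is, into $v_j$. That happens exactly when $(\chi_j)_{c_{\max}}=1$ and $(\chi_j)_{c_{\max}+1}=0$. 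Since no color above $c_{\max}$ appears, the entries of $\chi_j$ from position $c_{\max}+1$ through $n$ are constant, so $(\chi_j)_n=0$. The same reasoning in the other case gives $(\chi_j)_n=1$, matching last entry $1$. Hence $b_{S_T}(e_j)=\chi_j$.

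If instead $e_j$ is directed out of $v_j$, every $\alpha^\vee$ flips sign, and $\hat b=\vec 1-b_{S_T}(e_j)$. Complementing negates the differences and flips the last entry, so the same check gives $\hat b=\chi_j$. One degenerate case needs separate treatment: a vertex with no arcs. This is impossible, because $1\le k_j\le n-1$ means $\chi_j$ is nonconstant, so at least one color changes.

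The main obstacle is bookkeeping rather than ideas. I would need to confirm that the trunk/branch replacement in step (2) leaves the top trunk edge carrying precisely the full set of incident arcs, which is what Figure~\ref{fig: trunks and branches} depicts. I would also need to get the orientation conventions in $\hat b$ and in Theorem~\ref{thm:bijection} consistent with the in/out conventions of Construction~\ref{def:tableau matching}. I expect a clean statement to be: relative to the direction into $v_j$, the binary label on $e_j$ is $\chi_j$.
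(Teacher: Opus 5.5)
Your proof is correct and follows essentially the same route as the paper. Both arguments check, vertex by vertex, that $\hat b_{S_T}(e_j)$ is the indicator of the rows of $T$ containing $j$, using the fact that the strands on $e_j$ record its consecutive differences. The paper instead expands $\hat b_{S_T}(e_j)=\lambda_{r_1}+\sum_i(\lambda_{r_{i+1}}-\lambda_{h_i})$ in one case of Lemma~\ref{lem:boundary arc alternating} and calls the others analogous, whereas your "$\alpha^\vee$ values plus last entry determine the vector" argument handles all orientations uniformly.
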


\begin{proof}
Consider some boundary vertex $v_j$ with incident edge $e_j$. We examine the case
that $r_1<h_1<r_2<\cdots< r_q$ at $v_j$. In this case, the largest endpoint color at
$v_j$ is the in-color $r_q$, so $j$ does not lie in row $n$ of $T$, and $c\leq r_1$
or $h_i<c\leq r_{i+1}$ for $1\leq i<q$ exactly when $T$ has entry $j$ in row $c$.
By construction, $S_T$ has incoming simple strand colors $r_1, \ldots, r_q$ and
outgoing colors $h_1,\ldots, h_{q-1}$ at $v_j$. Thus the $j$th boundary binary
label coming from $S_T$ is $\hat{b}_{S_T}(e_j)=\lambda_{r_1}+\sum_{i=1}^{q-1}
(\lambda_{r_{i+1}}-\lambda_{h_i})$. Since $h_{i}<r_{i+1}$, the vector
$\lambda_{r_{i+1}}-\lambda_{h_i}$ has 1 in positions $h_{i}+1$ through $r_{i+1}$ and
0 elsewhere. Putting this together, $\hat{b}_{S_T}(e_j)$ has 1 in position $c$ if and
only if $c\leq r_1$ or $h_i<c\leq r_{i+1}$ for $1\leq i<q$. Since $x_c$ is a wedge
factor of the $j$th tensor factor of $x_{S_T}$ exactly when $\hat{b}_{S_T}(e_j)$
has 1 in position $c$, the result follows. The other cases described by Lemma
\ref{lem:boundary arc alternating} can be argued analogously; when the largest
endpoint color at $v_j$ is an out-color, $j$ lies in row $n$ and $e_j$ is oriented
out of the boundary, so the resulting complementation is absorbed by
$\hat{b}_{S_T}$.
\end{proof}

\begin{corollary}\label{cor: G_T in F_n(k)}
Let $T\in\mathcal{RST}_n(\vec k)$. Then $G_T\in F_n(\vec k)$.
\end{corollary}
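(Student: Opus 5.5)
We need to show $G_T\in F_n(\vec k)$: by Lemma~\ref{lem: G_T is a stranded web}, $G_T$ is already an $\mathfrak{sl}_n$ web carrying the valid stranding $S_T$. Since $T$ has $n$ rows and content $\{1^{k_1},\ldots,m^{k_m}\}$, it remains only to check that the boundary weight vector of $G_T$ is $\vec k$, i.e. that the boundary vertex $v_j$ contributes $k_j$.

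The plan is to read the boundary weight off the binary labeling. By Theorem~\ref{thm:bijection}, the weight $\ell_j$ of the boundary edge $e_j$ equals $|b_{S_T}(e_j)|$. By Definition~\ref{def:vector}, $\hat b_{S_T}(e_j)$ is either $b_{S_T}(e_j)$ (when $e_j$ points into $v_j$) or $\vec 1-b_{S_T}(e_j)$ (when it points out). Hence $|\hat b_{S_T}(e_j)|$ equals $\ell_j$ or $n-\ell_j$ according to the orientation of $e_j$. This is precisely the $j$th entry of the boundary weight vector in Definition~\ref{def: webs}. So the $j$th boundary weight of $G_T$ is $|\hat b_{S_T}(e_j)|$, the number of wedge factors in the $j$th tensor factor of $x_{S_T}$.

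Next I invoke Theorem~\ref{thm: monomial of S_T}: $T_{S_T}=T$. By Definition~\ref{def: tableau of stranding}, $|\hat b_{S_T}(e_j)|$ is the number of rows of $T_{S_T}$ containing $j$. In $T$, the entry $j$ appears exactly $k_j$ times, and row-strictness puts these in $k_j$ distinct rows. So the $j$th boundary weight is $k_j$, and the boundary weight vector is $\vec k$.

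The only step requiring care is the degenerate case of a boundary vertex with a single incident arc, which is handled by Step (3) of Construction~\ref{def: G_T and S_T} rather than by a trunk. There I would check directly that $e_j$ is oriented along its lone strand, so that Theorem~\ref{thm: monomial of S_T} still applies as stated. This is a consistency check on the construction, and I expect no genuine obstacle.
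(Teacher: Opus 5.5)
Your proposal is correct and follows essentially the paper's argument: row-strictness places $j$ in $k_j$ distinct rows of $T=T_{S_T}$ (Theorem~\ref{thm: monomial of S_T}), so $|\hat b_{S_T}(e_j)|=k_j$, and this is the $j$th boundary weight. You justify that last identification a bit more explicitly via $|b_{S_T}(e_j)|=\ell_j$, where the paper simply cites Definition~\ref{def:vector}; your final check for single-arc boundary vertices is harmless but unnecessary, since Theorem~\ref{thm: monomial of S_T} already covers every boundary vertex.
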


\begin{proof}
Since $T$ has content $\{1^{k_1},\ldots,m^{k_m}\}$ and the rows of $T$ strictly increase, the entry $j$ occupies $k_j$ distinct rows of $T$. By Theorem~\ref{thm: monomial of S_T}, the $j$th tensor factor of $x_{S_T}$ is therefore a wedge of exactly $k_j$ basis vectors, so $|\hat{b}_{S_T}(e_j)|=k_j$. By Definition~\ref{def:vector}, this says precisely that $v_j$ carries boundary weight $k_j$. Hence $G_T$ has boundary weight vector $\vec k$.
\end{proof}

We now record some observations about depth in the stranded web $G_T$.

\begin{lemma}\label{lem: matching web depth}
    Let $G_T$ and $S_T$ be as above. Then $d(F)=d_{S_T}(F)$ for every face $F$ of $G_T$.
\end{lemma}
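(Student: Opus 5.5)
One inequality comes for free. Since $S_T$ is a valid stranding, its simple strands form a balanced curve system, so Lemma~\ref{lem: depth bound} gives $d_{S_T}(F)\le d(F)$. All simple strands of $S_T$ are the arcs of $C_T$, which are open and clockwise, so $d_{S_T}(F)\ge 0$ and $d_{S_T}(F)$ simply counts the strands enclosing $F$. It therefore suffices to show $d(F)\le d_{S_T}(F)$. I would do this by induction on $d_{S_T}(F)$, mirroring Lemma~\ref{lem: graph depth is circle depth}. The key claim is that every bounded face $F$ of $G_T$ shares an edge with a face $F'$ satisfying $d_{S_T}(F')=d_{S_T}(F)-1$. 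Granting this, the induction runs exactly as before: a face of strand depth $0$ must be $U$, and otherwise crossing the shared edge extends a path of length at most $d_{S_T}(F)-1$.

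To prove the key claim, I would compare the faces of $G_T$ with the faces of $M_T$. The local pictures in Construction~\ref{def: G_T and S_T} (Figures~\ref{fig: resolution}, \ref{fig: trunks and branches}, \ref{fig: easy web conversion}) classify the faces of $G_T$. Each face of $M_T$ survives as a face of $G_T$ with the same strand depth, since the strands are isotopic to the arcs of $C_T$ and enclose the same regions. The only other faces are small new faces: the two side faces of the middle edge at a resolved crossing, and the faces between consecutive trunk edges and branches at a boundary vertex. The figures label each new face with its strand depth relative to its neighbors ($d-1,d,d+1$), and from this I would read off the key claim case by case.

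For a face $F$ inherited from $M_T$, Lemma~\ref{lem: adjacent smaller circle depth} provides an adjacent face of $M_T$ of smaller curve depth, across an arc segment. In $G_T$, that arc segment becomes a web edge whose two sides are the images of those two faces, or, near a crossing or trunk, an edge to a new face of depth $d-1$. In either case $F$ is adjacent to a face of depth one less. For a new face of depth $d$ at a crossing, the figure shows it sharing an edge with the face of depth $d-1$ below. The trunk faces are handled the same way, working up the trunk: the face labelled $d+1$ is adjacent to one labelled $d$, and so on down to the bottom face of depth $d-1$.

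The main obstacle is bookkeeping. I need to confirm that each edge of $G_T$ actually separates faces whose strand depths differ by exactly the claimed amount, which requires that the branch colors are monotone along the trunk, and that no strand depth changes by $2$ across a single edge. Edges in $G_T$ may carry several strands, some pointing opposite ways, so the depth difference is a signed count. I would verify on each figure that the relevant edge carries net one strand with the deeper face on its right. This follows from the balanced alternating structure in Lemma~\ref{lem:boundary arc alternating} together with clockwise orientation. Once that is checked, the induction closes and $d(F)=d_{S_T}(F)$.
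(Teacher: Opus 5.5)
Your proposal is correct and is essentially the paper's argument: $d_{S_T}(F)\le d(F)$ comes from Lemma~\ref{lem: depth bound}, and the reverse inequality is transferred from $M_T$. The only difference is that the paper simply cites Lemma~\ref{lem: graph depth is circle depth} and observes that every dual path in $M_T$ is also a dual path in $G_T$, rather than rerunning the induction inside $G_T$.

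One correction: $G_T$ has no new faces. $M_T$ is recovered from $G_T$ by contracting the crossing and trunk edges, none of which is a loop, so the faces correspond bijectively, and the faces you call new are just the faces of $M_T$ around a crossing or boundary vertex. Your separate case analysis is therefore vacuous. Your bookkeeping worry also disappears, since balance already forces the strand depths of adjacent faces to differ by at most one.
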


\begin{proof}
Let $F$ be a face of $G_T$ and $d(F)$ be the integral depth of $F$ in $G_T$. The graph $M_T$ can be obtained from $G_T$ via a sequence of edge contractions during which no self-loops are created or destroyed. This induces a bijection on the faces of $G_T$ and $M_T$ under which we can view $F$ as a face of $M_T$. The curves of $C_T$ enclosing $F$ in $M_T$ are in one-to-one correspondence with the simple strands of $S_T$ enclosing $F$ in $G_T$; hence $d_{C_T}(F)=d_{S_T}(F)$. By Lemma~\ref{lem: graph depth is circle depth}, this common value is the integral depth of $F$ in $M_T$. Since every path from $U$ to $F$ in $M_T$ is also a path in $G_T$, $d(F)$ is at most the integral depth of $F$ in $M_T$, so $d(F)\leq d_{C_T}(F)=d_{S_T}(F)$. The reverse inequality $d_{S_T}(F)\leq d(F)$ is Lemma~\ref{lem: depth bound} applied to the balanced curve system $S_T$. Hence $d(F)=d_{S_T}(F)$.
\end{proof}

By the preceding result, we can freely conflate integral depth and strand depth $d_{S_T}$ in $G_T$. We can use this to track the evolution of depth across edges in $G_T$.

\begin{lemma}\label{lem:nearby lemma}
Let $e$ be an interior edge of $G_T$ separating faces of depth $d$. The set of depths of faces incident to the endpoints of $e$ is $\{d-1,d, d+1\}$. 
\end{lemma}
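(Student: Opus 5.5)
The plan is to combine Lemma~\ref{lem: matching web depth} with the local pictures of Construction~\ref{def: G_T and S_T}. By Lemma~\ref{lem: matching web depth} we may replace integral depth by strand depth $d_{S_T}$ throughout. The computation in the proof of Lemma~\ref{lem: depth bound} shows that when we cross an edge $e$, the strand depth changes by the signed number of strands of $S_T$ on $e$: each strand with the starting face on its left counts $+1$, and each strand with that face on its right counts $-1$. Hence an edge $e$ with faces of equal depth $d$ on both sides must carry equally many strands in each direction. In particular $e$ carries at least two strands, so it cannot come from Step~(3), since arc-interior edges carry a single strand and their two sides differ by $1$.

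First I will classify the interior edges carrying at least two strands. Inspecting Construction~\ref{def: G_T and S_T}, these are exactly the middle edge of a crossing resolution in Step~(1) and the non-top edges of a trunk in Step~(2). The top trunk edge meets the boundary, so it is a boundary edge and is excluded.

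\emph{Crossing case.} Here the picture in Figure~\ref{fig: resolution} already records the relative strand depths. The two faces flanking the middle edge have depth $d$, and the remaining faces at its two endpoints have depths $d-1$ and $d+1$. I will justify these labels by the signed-count rule: starting from a flanking face of depth $d$, cross the single-strand edge toward the bottom face and toward the top face. Using the clockwise orientation of the arcs, the two strands force changes of $-1$ and $+1$ respectively. This gives exactly $\{d-1,d,d+1\}$.

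\emph{Trunk case.} This is the main obstacle, because $p$, $q$ and the four patterns (A)--(D) of Lemma~\ref{lem:boundary arc alternating} vary. I will argue up the trunk by induction. The lowest trunk vertex has two branches with strands of opposite direction, as in Figure~\ref{fig: trunks and branches}. Each subsequent trunk vertex adds one branch whose strand joins the trunk, and branches alternate between the left and right sides (out-arcs on the left, in-arcs on the right). Consequently the signed strand count on successive trunk edges alternates between $0$ and $\pm1$. Equivalently, the faces to the left and right of the trunk alternately have equal depth and differ by one.

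For an interior trunk edge with depth $d$ on both sides, the two faces at its endpoints not flanking it are branch faces. I will compute their depths by crossing a single branch edge, whose one strand changes depth by exactly $\pm1$. Because the strands are clockwise and the out-arcs lie on the left, the branch below the edge gives depth $d-1$ and the branch above gives $d+1$. This mirrors the two panels of Figure~\ref{fig: trunks and branches}, and the same local check applies at every level of the induction. The monotonicity of branch colors from Step~(2) guarantees that the strand order along the trunk is alternating, so the pictured configuration is the only one that occurs.

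Combining the crossing and trunk cases, every interior edge of $G_T$ with depth $d$ on both sides has endpoint faces of depth exactly $d-1$, $d$ and $d+1$, which proves the lemma.
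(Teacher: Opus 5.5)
Your proposal is correct and follows essentially the same route as the paper. Both reduce to the middle edge of a crossing resolution and to the interior trunk edges, then read off the depths $d-1,d,d+1$ from the local pictures in Figures~\ref{fig: resolution} and~\ref{fig: trunks and branches}, using that strand depth equals integral depth and that all strands are clockwise; your signed-count rule and induction up the trunk just spell out the check the paper leaves to inspection of those figures.
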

\begin{proof}
    Such an edge either comes from a crossing of $M_T$ or is a trunk edge at some vertex. Since depth is the number of simple strands enclosing a face, and all strands are clockwise, we can verify this result by considering the local modifications used to construct $G_T$ from $M_T$. These are shown in Figures~\ref{fig: resolution} and~\ref{fig: trunks and branches}.
\end{proof}

Recall that $L_{(1,n)}(S_T)$ is the curve system on $G_T$ which is the union of the $(1,n)$-strands of $S_T$. 
 The following lemma lists properties of $L_{(1,n)}(S_T)$.

\begin{lemma}\label{lem: (1,n) strands}
Let $G_T$ and $S_T$ be as above. 
\begin{enumerate}
    \item $L_{(1,n)}(S_T)$ is the union of edges of $G_T$ separating faces of different depth.
    \item Every $(1,n)$-strand of $S_T$ is clockwise.
    \item Every $(1,n)$-strand of $S_T$ is open.
\end{enumerate}
\end{lemma}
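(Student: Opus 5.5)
Part (1) is a local computation relating the binary labels of $S_T$ to strand depth. For an edge $e$ of $G_T$ with $b=b_{S_T}(e)$, we have $b_1-b_n=\sum_{c=1}^{n-1}(b_c-b_{c+1})=\sum_c\alpha_c^\vee(S_T(e))$. So $(1,n)\in L(e)$ exactly when the numbers of simple strands of $S_T$ running with and against $e$ differ, which by the alternating-sign condition means they differ by exactly one. The sign of $b_1-b_n$ records which direction has the excess.

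Next I would repeat the computation in the proof of Lemma~\ref{lem: depth bound}. Let $F_L$ and $F_R$ be the faces to the left and right of $e$ as traversed from tail to head. Each simple strand running with $e$ has $F_L$ on its left and contributes $-1$ to $d_{S_T}(F_R)-d_{S_T}(F_L)$, while each strand running against $e$ contributes $+1$. Hence
$$d_{S_T}(F_R)-d_{S_T}(F_L)=-(b_1-b_n).$$
Combined with Lemma~\ref{lem: matching web depth}, this proves (1): $e$ lies on $L_{(1,n)}(S_T)$ if and only if $d(F_L)\neq d(F_R)$. The identity also carries the orientation: the $(1,n)$-strand traverses $e$ tail-to-head when $b_1-b_n=1$, and in that case the face on its right is one deeper than the face on its left.

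For (2), the orientation fact says that every $(1,n)$-strand always has the deeper face on its right. Along the strand, depth therefore jumps from $d-1$ on the left to $d$ on the right. I would use Lemma~\ref{lem:nearby lemma}, or equivalently part (1) with the local pictures in Figures~\ref{fig: resolution} and~\ref{fig: trunks and branches}, to check that this jump level $d$ stays constant along each component. So each $(1,n)$-strand $\gamma$ is a component of the level-set boundary $\partial\{F: d(F)\geq d\}$ with the region of depth $\geq d$ on its right.

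A loop with the deeper side on its right that is counterclockwise would enclose a region strictly shallower than the faces just outside it, which are at depth $d$. Any face inside would then have depth $\le d-1$ while being enclosed by a curve bounding $\{d(F)\ge d\}$. I expect to contradict this with Lemma~\ref{lem: adjacent smaller circle depth}, transported to $G_T$: every bounded face has a neighbor of smaller depth, so depth has no interior local minima. The same argument handles open strands: an arc with the deeper side on its right and the shallower side containing $U$ must be clockwise, because $U$ has depth $0$ and lies on the left.

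For (3), a closed $(1,n)$-strand would be a loop bounding $\{d\ge d_0\}$ locally, so the face $U$ lies outside it. The region just inside the loop has depth $d_0\geq1$ and contains no boundary points. I would derive a contradiction by tracing it back to $M_T$. Through the contraction bijection of Lemma~\ref{lem: matching web depth}, the edges of $G_T$ separating different depths correspond to all edges of $M_T$, since each carries exactly one arc. The $(1,n)$-strands are then unions of arc segments in which, at each crossing, the strand switches arcs. Following the argument of Lemma~\ref{lem: adjacent smaller circle depth}, with the inequality chain $a_{t+1}<a_t<b_{t+1}<b_t$ at crossings, the endpoint index $b_t$ strictly decreases around the loop. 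So no closed cycle can exist, and every $(1,n)$-strand is open.

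The main obstacle is making this correspondence precise. I need to verify from Figures~\ref{fig: resolution} and~\ref{fig: trunks and branches} that at each crossing and trunk the $(1,n)$-strand turns in the direction that makes the monotone-index argument apply, and this is where I would spend the care.
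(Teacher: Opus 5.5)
\textbf{Parts (1) and (2).} These are sound and follow the paper's reasoning. Your minimal-depth-face argument makes explicit the paper's one-line claim that $U$ lies on the shallow side of each $(1,n)$-strand. For arcs, run that argument on the region between the arc and the axis, rather than assuming the shallow side contains $U$.

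There is a sign slip. By the proof of Lemma~\ref{lem: depth bound}, a strand contributes $+1$ to (depth of its right face) minus (depth of its left face), so the identity is $d_{S_T}(F_R)-d_{S_T}(F_L)=b_1-b_n$. Your verbal conclusion, that every $(1,n)$-strand has the deeper face on its right, is the correct one.

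\textbf{The gap is in (3).} The index $b_t$ is not monotone along an arbitrary closed $(1,n)$-strand, so the verification you defer to the end would fail. Suppose arcs $(v_a,v_b)$ and $(v_{a'},v_{b'})$ cross with $a<a'<b<b'$. The resolved crossing has two vertices, and they give opposite inequalities.
\begin{itemize}
\item At the vertex bordering the face enclosed by neither arc (depth $d-1$ in Figure~\ref{fig: resolution}), the $(1,n)$-strand passes from $(v_{a'},v_{b'})$ to $(v_a,v_b)$. This gives your inequality $a_{t+1}<a_t<b_{t+1}<b_t$.
\item At the vertex bordering the face enclosed by both arcs (depth $d+1$), it passes from $(v_a,v_b)$ to $(v_{a'},v_{b'})$. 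So $a_t<a_{t+1}<b_t<b_{t+1}$ and $b$ increases. For arcs $(v_1,v_3)$ and $(v_2,v_4)$, this is the strand from $v_3$ to $v_2$.
\end{itemize}
These two moves are mutually inverse, so no endpoint index is monotone on a loop that uses both. The inequality you borrow from Lemma~\ref{lem: adjacent smaller circle depth} describes a counterclockwise loop around a locally shallowest face. By your own part (2), a closed $(1,n)$-strand is instead clockwise around deeper faces.

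Your correspondence with $M_T$ is also incomplete. Trunk edges carrying an odd number $\geq 3$ of simple strands separate faces of different depth, but they are contracted in $M_T$. So $(1,n)$-strands can also switch arcs at boundary vertices.

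\textbf{The missing step} is the paper's reduction to an innermost closed $(1,n)$-strand.
\begin{itemize}
\item Its deep side is its inside, and every edge strictly inside would separate faces of equal depth. Lemma~\ref{lem:nearby lemma} rules out such edges, so the strand bounds a single face deeper than all its neighbors.
\item It cannot pass through a trunk, nor through the depth-$(d-1)$ vertex of a crossing. In each case a trunk edge or the middle crossing edge would lie inside it.
\item Hence every switch is of the second kind, and $b_t$ strictly increases around the loop. This is the contradiction you want, but with the inequality reversed from the one you state.
\end{itemize}
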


\begin{proof}
Recall that an edge $e$ of $G_T$ carries a $(1,n)$-strand of $S_T$ exactly when the first and last entries of $b_{S_T}(e)$ differ. Every edge of $G_T$ is oriented in the direction of the largest colored simple strand of $S_T$, so $b_{S_T}(e)=\sum_{c=1}^{n-1}\alpha_c^\vee(S_T(e))\lambda_c$. For all $c$, the vector $\lambda_c$ has 1 in the first position and 0 in the last. Thus, $e$ carries a $(1,n)$-strand of $S_T$ exactly when the number of simple strands of $S_T$ on $e$ is odd.
\begin{enumerate}
\item  Lemma \ref{lem: matching web depth} shows that integral depth of a face in $G_T$ is the number of simple strands of $S_T$ enclosing that face. This means faces on either side of an edge have different depth precisely when the number of simple strands of $S_T$ along that edge is odd.  
\item By construction, the binary labels prescribed by $S_T$ have 0 in the $n$th entry, so every edge of $G_T$ carrying a $(1,n)$-strand of $S_T$ is directed with the $(1,n)$-strand. Therefore, along each $(1,n)$-strand, faces on the left have smaller depth than faces on the right. Each $(1,n)$-strand of $S_T$ splits the half-plane into two regions one of which contains the unbounded face. The region containing the unbounded face must be on the side of this component of smaller depth. Therefore every $(1,n)$-strand is clockwise. 

\item Suppose $S_T$ has a closed $(1,n)$-strand, and
choose an innermost one. By Lemma~\ref{lem:nearby lemma}, there are no edges of
$G_T$ inside it, so it encloses a single face. Suppose the strand contains a trunk
edge $e$. Since $e$ is not a boundary edge, there is an adjacent trunk edge carrying one more simple strand than $e$. But this would mean there is an edge within the closed $(1,n)$-strand on which $e$ lies which is not possible. Hence the strand contains no trunk edges, every vertex along
it comes from a crossing of $C_T$, and every edge of it carries a single simple
strand. As in the closed loop case of the proof of Lemma~\ref{lem: adjacent smaller circle depth}, this leads to a contradictory system of inequalities and is therefore not possible.
\end{enumerate}
 \end{proof}

By Lemma~\ref{lem: (1,n) strands}, the $(1,n)$-strands of $S_T$ are clockwise arcs, and by Lemma~\ref{lem:nearby lemma}, the edges of $G_T$ not in $L_{(1,n)}(S_T)$ (those separating faces of the same depth) have endpoints on nested $(1,n)$-strands. This gives the following structural corollary.

\begin{corollary} \label{cor: Arcs and vertical edges}
The web $G_T$ can be drawn so that the edges of $L_{(1,n)}(S_T)$ form a noncrossing
matching drawn as a collection of half circles, and every remaining edge of $G_T$ is a vertical
segment running either to the boundary axis or between two nested components of
$L_{(1,n)}(S_T)$.
\end{corollary}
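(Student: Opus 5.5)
The plan is to build the drawing in two stages. First, fix the positions of the $(1,n)$-strands. By Lemma~\ref{lem: (1,n) strands}(2)--(3), every component of $L_{(1,n)}(S_T)$ is an open, clockwise arc with both endpoints on the boundary axis. Distinct components are vertex-disjoint: the binary label on an edge determines whether it carries a $(1,n)$-strand, and at a trivalent interior vertex the strand enters on one edge and leaves on another, so at most two incident edges carry it. Hence these arcs form a noncrossing family of arcs in the half-plane. By planar isotopy fixing the axis, we redraw each arc as a half circle centered on the axis with the same endpoints, nested exactly as before. The isotopy then carries all interior vertices on these arcs to points on the half circles, preserving their cyclic order along each arc.

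Second, place the remaining edges. By Lemma~\ref{lem: (1,n) strands}(1), an edge not in $L_{(1,n)}(S_T)$ separates two faces of equal depth $d$. By Lemma~\ref{lem:nearby lemma}, each of its endpoints is incident to faces of depths $d-1$, $d$, $d+1$. So each endpoint lies on a $(1,n)$-arc separating depth $d-1$ from depth $d$, or on one separating $d$ from $d+1$, or is a boundary vertex. Since the edge lies inside a single depth-$d$ face, it is contained in the region between the arc bounding that face from outside (depth $d-1$ side) and the arcs bounding it from inside. Thus each such edge joins either an outer arc to an inner arc nested immediately within it, or one of these arcs to the boundary axis (trunk bottoms and single-edge boundary vertices). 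It never joins two points of the same arc: such an edge would cut off a face with a chord, and the faces it separates would then differ in depth, or would not both have depth $d$, contradicting Lemma~\ref{lem:nearby lemma}.

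The main obstacle is to straighten these connecting edges into vertical segments simultaneously while keeping planarity. The region between an arc and the arcs nested immediately inside it, together with the axis, is an annular-type strip. The connecting edges in it form a noncrossing family of chords, each running from the outer boundary to an inner boundary or to the axis. I would order these chords by the left-to-right order of their endpoints on the outer arc, which matches the order of their inner endpoints because they are noncrossing. Then I would isotope the vertices along the half circles so that corresponding endpoints share an $x$-coordinate. This is possible since the inner half circles lie within the $x$-range of the outer one, and points may slide monotonically along each half circle. Doing this inductively from the outermost arcs inward, each chord becomes a vertical segment, and the isotopy class of the half-plane graph is unchanged.
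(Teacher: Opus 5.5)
Your strategy is the paper's. Lemma~\ref{lem: (1,n) strands} makes the components of $L_{(1,n)}(S_T)$ disjoint clockwise arcs, and Lemma~\ref{lem:nearby lemma} places the remaining edges between nested arcs. The paper stops there and does not discuss straightening.

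However, your middle step misreads Lemma~\ref{lem:nearby lemma}, and as written it does not establish the nesting. An endpoint of an interior edge $e$ flanked by two depth-$d$ faces is trivalent, and two of its three faces are those flanking faces. So it cannot see all of $d-1,d,d+1$. The lemma says the union over the two endpoints is $\{d-1,d,d+1\}$: the third face at one endpoint has depth $d-1$, and at the other it has depth $d+1$. That is exactly the fact you need. It puts one endpoint on the arc bounding the depth-$d$ region from outside and the other on an arc bounding it from inside.

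Your weakened version (each endpoint lies on some arc separating $d-1$ from $d$, or $d$ from $d+1$) does not exclude an edge joining two sibling inner arcs. You never address this case, and such an edge can never be drawn vertically, since sibling half circles have disjoint $x$-ranges. Your exclusion of an edge with both ends on one arc is also wrong as stated. Both faces flanking such a chord lie in the same region and have depth $d$, so nothing differs in depth. The real contradiction is that only one of $d\pm 1$ occurs among the endpoint faces.

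There are two smaller issues. First, Lemma~\ref{lem:nearby lemma} covers only interior edges, so the boundary edges outside $L_{(1,n)}(S_T)$ need separate treatment. These are the top trunk edges at vertices with $p+q$ even. Contrary to your parenthetical, a boundary vertex with a single arc has an edge carrying one strand, and that edge lies on $L_{(1,n)}(S_T)$. For each such boundary edge, you must show its interior endpoint lies on the outer arc of its region. Otherwise a vertical segment rising from an inner half circle to the axis would run through that half circle's interior. This does hold: the third face at the top trunk vertex lies across the top branch, which carries a single strand, so that face has depth $d-1$.

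Second, straightening outermost-first can fail. A vertex on an arc whose edge rises to the axis has its $x$-coordinate forced by the boundary vertex. Positions chosen on that arc while treating the parent region may therefore violate the order along it. Working innermost-first avoids this. When a region is treated, all upper endpoints of its edges already have fixed $x$-coordinates. The inner boundary is $x$-monotone, so noncrossing forces the same order along the outer arc, and the outer arc's remaining vertices fit into the gaps.
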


\begin{example}
In Figure~\ref{fig: running example half circles}, we redraw the example web from Figure~\ref{fig:ex of matching stranding} in the form of Corollary~\ref{cor: Arcs and vertical edges}. The $(1,n)$-strands of $L_{(1,n)}(S_T)$, drawn in orange, form nested half-circles; the remaining vertical edges of $G_T$ run either to the boundary or between nested components of $L_{(1,n)}(S_T)$. Depths are shown in brown. Edge weights are omitted but can be inferred from Figure~\ref{fig:ex of matching stranding}.
\begin{figure}[h]
\centering
\begin{tikzpicture}[scale=0.8, line cap=round, line join=round, >={Stealth[length=2.6mm]},
  gedge/.style={black, line width=0.7pt, postaction={decorate}, decoration={markings, mark=at position 0.55 with {\arrow{Stealth[length=2.6mm]}}}},
  nstr/.style={line width=1.1pt, color=orange, postaction={decorate}, decoration={markings, mark=at position 0.5 with {\arrow{Stealth[length=2.6mm]}}}}]
\draw[axis,<->] (-6.4,0)--(6.4,0);
\draw[gedge] (222.269:5.0) arc[start angle=222.269, end angle=180.000, radius=5.0];
\draw[gedge] (252.542:5.0) arc[start angle=252.542, end angle=222.269, radius=5.0];
\draw[gedge] (263.108:5.0) arc[start angle=263.108, end angle=252.542, radius=5.0];
\draw[gedge] (276.892:5.0) arc[start angle=276.892, end angle=263.108, radius=5.0];
\draw[gedge] (287.458:5.0) arc[start angle=287.458, end angle=276.892, radius=5.0];
\draw[gedge] (360.000:5.0) arc[start angle=360.000, end angle=287.458, radius=5.0];
\draw[gedge] (227.014:2.2) arc[start angle=227.014, end angle=180.000, radius=2.2];
\draw[gedge] (254.173:2.2) arc[start angle=254.173, end angle=227.014, radius=2.2];
\draw[gedge] (270.000:2.2) arc[start angle=270.000, end angle=254.173, radius=2.2];
\draw[gedge] (285.827:2.2) arc[start angle=285.827, end angle=270.000, radius=2.2];
\draw[gedge] (312.986:2.2) arc[start angle=312.986, end angle=285.827, radius=2.2];
\draw[gedge] (360.000:2.2) arc[start angle=360.000, end angle=312.986, radius=2.2];
\draw[gedge] (-3.700,-3.363) -- (-3.700,0.000);
\draw[gedge] (-0.000,-2.200) -- (0.000,0.000);
\draw[gedge] (-1.500,-1.609) -- (-1.500,-4.770);
\draw[gedge] (-0.600,-4.964) -- (-0.600,-2.117);
\draw[gedge] (0.600,-4.964) -- (0.600,-2.117);
\draw[gedge] (1.500,-1.609) -- (1.500,-4.770);
\draw[nstr] (360:5.18) arc[start angle=360, end angle=180, radius=5.18];
\draw[nstr] (360:2.3800000000000003) arc[start angle=360, end angle=180, radius=2.3800000000000003];
\fill (-5.000,0.000) circle (2.5pt);
\fill (-3.700,-3.363) circle (2.5pt);
\fill (-1.500,-4.770) circle (2.5pt);
\fill (-0.600,-4.964) circle (2.5pt);
\fill (0.600,-4.964) circle (2.5pt);
\fill (1.500,-4.770) circle (2.5pt);
\fill (5.000,-0.000) circle (2.5pt);
\fill (-2.200,0.000) circle (2.5pt);
\fill (-1.500,-1.609) circle (2.5pt);
\fill (-0.600,-2.117) circle (2.5pt);
\fill (-0.000,-2.200) circle (2.5pt);
\fill (0.600,-2.117) circle (2.5pt);
\fill (1.500,-1.609) circle (2.5pt);
\fill (2.200,-0.000) circle (2.5pt);
\fill (-3.700,0.000) circle (2.5pt);
\fill (0.000,0.000) circle (2.5pt);
\node[depthcolor] at (-1.0,-0.9) {$2$};
\node[depthcolor] at (1.0,-0.9) {$2$};
\node[depthcolor] at (-4.2,-1.05) {$1$};
\node[depthcolor] at (-2.65,-2.25) {$1$};
\node[depthcolor] at (-1.05,-3.3) {$1$};
\node[depthcolor] at (0,-3.65) {$1$};
\node[depthcolor] at (1.05,-3.3) {$1$};
\node[depthcolor] at (2.65,-1.95) {$1$};
\node[depthcolor] at (0,-5.45) {$0$};
\end{tikzpicture}
\caption{A web drawn in the form of Corollary~\ref{cor: Arcs and vertical edges}.}\label{fig: running example half circles}
\end{figure}
\end{example}

\begin{remark}
    Note that $L_{(1,n)}(S_T)$ is exactly the band diagram for $G_T$ defined in \cite[Section 3.1]{RTshadow}.
\end{remark}

\subsection{The tableau stranding is a leading term stranding}\label{subsection: web basis}

Finally, we show that $S_T$ is a leading term stranding of $G_T$. First, we observe that $S_T$ agrees with a leading term stranding of $G_T$ at $e_1$. This anchors the
argument in the proof of Theorem~\ref{thm: matching is lex-leading}.

\begin{lemma}\label{lem: vertex 1 lex-minimal}
Let $T\in \mathcal{RST}_n(\vec{k})$, and let $G_T$, $S_T$
be the web and stranding produced by the construction of
Section~\ref{subsection: stranded web}. Consider some leading term stranding $S^\star$ of $G_T$. Then $S^{\star}(e_1)=S_T(e_1)$. 
\end{lemma}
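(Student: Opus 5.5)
Since $v_1$ is the leftmost boundary vertex and the leading term is lexicographically minimal, the first tensor factor of $x_{S^\star}$ must be the $\prec$-smallest $x_{\vec b}$ with $|\vec b|=k_1$ occurring as the first factor of any $x_S$, $S\in\mathcal{S}tr(G_T)$. The plan is to show that the $S_T$ label at $e_1$ is forced: first, that $\hat b_{S^\star}(e_1)$ is determined by row-strictness of $T_{S^\star}$; second, that this value agrees with $\hat b_{S_T}(e_1)$. By Theorem~\ref{thm:bijection} the boundary label $\hat b$ determines $b(e_1)$ and hence $S(e_1)$, so equality of labels gives $S^\star(e_1)=S_T(e_1)$.

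For the first step, use Theorem~\ref{thm: leading term row strict}: $T_{S^\star}\in\mathcal{RST}_n(\vec k)$. In any row-strict tableau with weakly increasing columns, the entry $1$ can only sit in the first column. Hence it occupies a set of rows that is closed upward: if $1$ is in row $c+1$ then the entry above it in column one is at most $1$, so it equals $1$. Therefore $1$ occupies exactly rows $1,\dots,k_1$, and $\hat b_{S^\star}(e_1)=\lambda_{k_1}$. Equivalently, by Lemma~\ref{lem: open strands clockwise}, no open simple strand can have its tail at $v_1$, because that strand would be counterclockwise. So every simple strand at $v_1$ ends there, which forces the label to be $1\cdots10\cdots0$.

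For the second step, apply the same reasoning to $T$. Theorem~\ref{thm: monomial of S_T} gives $T_{S_T}=T$, and $T$ is row-strict, so $1$ occupies rows $1,\dots,k_1$ of $T$ and $\hat b_{S_T}(e_1)=\lambda_{k_1}$. At $v_1$ there is then a single in-endpoint of color $k_1$ and no out-endpoints. By Step (2) of Construction~\ref{def:tableau matching}, $v_1$ can only be an in-endpoint in any case, since it is leftmost.

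Both boundary labels therefore equal $\lambda_{k_1}$. The edge $e_1$ and its orientation are fixed by $G_T$, so $b_{S^\star}(e_1)=b_{S_T}(e_1)$ after undoing the complementation in Definition~\ref{def:vector} when $e_1$ points out of $v_1$. By Theorem~\ref{thm:bijection}, $S^\star(e_1)=S_T(e_1)$.

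The only delicate point is handling the orientation convention of $\hat b$, together with the remark that $S(e_1)$ is determined by $b(e_1)$ alone via $\alpha^\vee_c$. Neither is a real obstacle.
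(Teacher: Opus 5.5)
Your proof is correct, but you pin down $\hat b_{S^\star}(e_1)$ by a different mechanism than the paper does.

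The paper argues by direct comparison. Since $T_{S_T}=T$ is row-strict, the first tensor factor of $x_{S_T}$ is $x_1\wedge\cdots\wedge x_{k_1}$, the absolute $\prec$-minimum among weight-$k_1$ monomials. Because $x_{S_T}$ occurs in $f^{\uparrow}(G_T)$ (Theorem~\ref{thm: nonzero coefficient}) and the lexicographic order compares first factors first, the leading term $x_{S^\star}\preceq x_{S_T}$ must share this first factor.

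You instead apply Theorem~\ref{thm: leading term row strict} to $S^\star$ itself (equivalently Lemma~\ref{lem: open strands clockwise}: no strand can have its tail at the leftmost vertex). This gives $\hat b_{S^\star}(e_1)=\lambda_{k_1}$ with no reference to $S_T$, and you then compute $\hat b_{S_T}(e_1)=\lambda_{k_1}$ separately.

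What each route buys:
\begin{itemize}
\item The paper's route is lighter. It needs only that $S_T$ attains the global minimum, not the strand-reversal machinery behind Theorem~\ref{thm: leading term row strict}. In fact, the observation in your opening sentence, combined with your computation $\hat b_{S_T}(e_1)=\lambda_{k_1}$, already finishes the proof this way.
\item Your route proves a stronger, web-independent fact: every leading term stranding of every $G\in F_n(\vec k)$ has first boundary label $\lambda_{k_1}$. Comparing with $S_T$ then reduces to a computation.
\end{itemize}

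Your explicit passage from $\hat b$ back to $b(e_1)$, and then to $S(e_1)$ via Theorem~\ref{thm:bijection}, correctly fills in a step the paper leaves implicit.
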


\begin{proof}
Since $T$ is a row-strict tableau with content $\{1^{k_1},\ldots, m^{k_m}\}$, the entry $1$ occupies rows $1,\ldots,k_1$ of $T$. By Theorem~\ref{thm: monomial of S_T}, the first tensor factor of $x_{S_T}$ is $x_1\wedge \cdots \wedge x_{k_1}$. Since this is lexicographically minimal, and $S^\star$ is a leading term stranding, $S^{\star}(e_1)=S_T(e_1)$.
\end{proof}

Next, we have two technical lemmas that drive the proof of our main theorem.

\begin{lemma}\label{lem: crossing disagreement}
    Consider some crossing of $M_T$ and its corresponding edges in $G_T$, and choose some stranding $S$ of $G_T$. If $S$ agrees with $S_T$ on all outgoing edges of this crossing, then it also agrees with $S_T$ on all incoming edges. 
\end{lemma}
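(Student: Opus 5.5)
The plan is to work entirely with binary labelings, using the bijection of Theorem~\ref{thm:bijection}: two strandings agree on an edge exactly when their binary labels agree there. Consider a crossing of $M_T$ between a color-$c$ arc and a color-$c'$ arc with $c\neq c'$. It is resolved as in Figure~\ref{fig: resolution} into two trivalent vertices $U$ (upper) and $L$ (lower), joined by a middle edge $m$ that carries both strands.

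The four outer edges each carry a single simple strand of $S_T$. In the figure, the red strand enters along the lower right edge and exits along the upper left edge, and the blue strand enters along the upper right edge and exits along the lower left edge. So the two outgoing edges, one at $U$ and one at $L$, are the known data, and the unknowns are the two incoming edges and $m$.

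First I record the $S_T$ labels. By Construction~\ref{def: G_T and S_T}(4), each edge is oriented along its largest strand color, so an outer edge carrying only color $c$ has $b_{S_T}=\lambda_c$. If $c<c'$, the middle edge has $b_{S_T}(m)=\lambda_{c'}-\lambda_c$, the indicator of positions $c+1,\dots,c'$, and symmetrically if $c>c'$. Next, by the remark after Lemma~\ref{lem: G_T is a stranded web}, the weight sums at $U$ and $L$ vanish exactly, not just mod $n$. Hence Lemma~\ref{lem: flow condition on binary labels} applies to any valid $S$: at each of $U$ and $L$, one binary label is the signed sum of the other two. The signs are determined by the edge orientations of $G_T$, which do not depend on the stranding.

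Now suppose $S$ agrees with $S_T$ on both outgoing edges. I take the case $c<c'$; the case $c>c'$ is symmetric under swapping the roles of $U$ and $L$. Let $x$ be the unknown label on the incoming edge at one vertex, $z$ the unknown label on the incoming edge at the other, and $y=b_S(m)$, which has weight $c'-c$.

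The two flow relations take the form
\[
x=y+\lambda_c \qquad\text{and}\qquad y+z=\lambda_{c'}.
\]
The orientations and weights of Construction~\ref{def: G_T and S_T} force this form: the weights satisfy $c'=(c'-c)+c$, and each relation must balance. I will confirm this by reading orientations off Figure~\ref{fig: resolution}.

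Because all entries lie in $\{0,1\}$, the two relations pin down every unknown:
\begin{itemize}
\item From $x=y+\lambda_c$ being a $0/1$ vector, $y$ vanishes in positions $1,\dots,c$.
\item From $y+z=\lambda_{c'}$, the vectors $y$ and $z$ have disjoint supports inside $\{1,\dots,c'\}$ whose union is all of $\{1,\dots,c'\}$.
\item Since $z$ has weight $c$ and $y$ misses $\{1,\dots,c\}$, we get $z=\lambda_c$ and $y=\lambda_{c'}-\lambda_c$.
\item Then $x=\lambda_{c'}$.
\end{itemize}
These are exactly the $S_T$ labels, so $S$ agrees with $S_T$ on both incoming edges and on $m$.

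The main obstacle is bookkeeping. I must correctly identify which outer edges are outgoing and get the signs $\sigma_U,\sigma_L$ right in each of the cases ($c<c'$ versus $c>c'$, and which strand passes which vertex). That is what guarantees the relations have the pure-sum form above rather than a difference form, which would not pin down the unknowns. I would handle this by fixing the picture of Figure~\ref{fig: resolution} with colors $c$ (red) and $c'$ (blue) and checking the two orderings of $c$ and $c'$ explicitly.
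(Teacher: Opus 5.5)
Your proposal is correct and follows essentially the paper's argument: your relations $x=y+\lambda_c$ and $y+z=\lambda_{c'}$ are exactly the paper's $b_S(e)=\beta'-\lambda_c=\lambda_{c'}-\beta$, obtained from Lemma~\ref{lem: flow condition on binary labels}. The orientations in Figure~\ref{fig: resolution} do give this pure-sum form when $c<c'$ and, with $U$ and $L$ swapped, when $c>c'$; the only difference is that the paper fixes the middle label by its zeros outside positions $c+1,\dots,c'$ and its weight $c'-c$, whereas you use the weight of $z$.
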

\begin{proof}

Say $S$ assigns incoming edges the binary labels $\beta$ and $\beta'$ with $c$ and $c'$ nonzero entries, respectively, where $c<c'$. By Lemma~\ref{lem: flow condition on binary labels}, the vertical edge $e$ coming from the crossing has binary label $b_S(e)=\beta'-\lambda_c=\lambda_{c'}-\beta.$ This implies $b_S(e)$ has 0 in the first $c$ entries and 0 in the last $n-c'$ entries. Since $b_S(e)$ has exactly $c'-c$ ones, it follows that $b_S(e)=\lambda_{c'}-\lambda_c$ so that $\beta'=\lambda_{c'}$ and $\beta=\lambda_c$.
\end{proof}

\begin{lemma}\label{lem: trunk disagreement}
    Consider some boundary vertex $v_j$  of $G_T$ with incident edge $e_j$ and some stranding $S$ of $G_T$. Say $S$ agrees with $S_T$ on all outgoing branches of the trunk at $v_j$. Then $\hat{b}_{S_T}(e_j)\preceq \hat{b}_S(e_j)$. Moreover, $\hat{b}_{S_T}(e_j)= \hat{b}_S(e_j)$ if and only if $S_T$ and $S$ agree on all incoming branch edges at $v_j$.
\end{lemma}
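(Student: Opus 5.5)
The plan is to compute $\hat b_S(e_j)$ explicitly from the branch labels, using exact flow conservation at the trunk vertices, and then compare it with $\hat b_{S_T}(e_j)$ through prefix sums. Throughout I use the notation of Figure~\ref{fig: boundary arcs in tableau matching}: out-colors $h_1,\ldots,h_p$ and in-colors $r_1,\ldots,r_q$ at $v_j$.

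\textbf{Step 1: flow along the trunk.} By the remark after Lemma~\ref{lem: G_T is a stranded web}, every interior vertex of $G_T$ satisfies $\sum_i\sigma_v(e_i)\ell_i=0$ exactly. Hence Lemma~\ref{lem: flow condition on binary labels} applies at each of the $p+q-1$ trunk vertices, for any stranding $S$ and not only for $S_T$. Each branch edge carries a single simple strand of $S_T$, so it is oriented along that strand. Thus an outgoing branch of color $h_i$ points away from the trunk, an incoming branch of color $r_i$ points toward it, and their $S_T$-labels are $\lambda_{h_i}$ and $\lambda_{r_i}$.

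I will propagate the flow identity from the bottom trunk vertex up to the top edge $e_j$. In the case where $e_j$ is oriented into $v_j$, this gives
$$\hat b_S(e_j)=\sum_{i=1}^q b_S(\rho_i)-\sum_{i=1}^p b_S(\eta_i),$$
where $\rho_i$ and $\eta_i$ denote the incoming and outgoing branch edges. When the largest endpoint color is an out-color, $e_j$ is oriented out of $v_j$. In that case I expect the same identity after passing to $\hat b$, with the complementation $\vec 1-b$ absorbing the constant shift, as in the proof of Theorem~\ref{thm: monomial of S_T}. Checking this sign and orientation bookkeeping across all four cases (A)--(D) of Lemma~\ref{lem:boundary arc alternating} is the step I expect to be most error-prone. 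I would treat case (B) in detail and argue that the others are symmetric.

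\textbf{Step 2: substitute the hypothesis.} By assumption $b_S(\eta_i)=\lambda_{h_i}=b_{S_T}(\eta_i)$. Write $\beta_i:=b_S(\rho_i)$, a binary vector with $|\beta_i|=r_i$. Then
$$\hat b_S(e_j)-\hat b_{S_T}(e_j)=\sum_{i=1}^q(\beta_i-\lambda_{r_i}).$$

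\textbf{Step 3: prefix sums.} For a vector $x$ and $1\le t\le n$, let $P_t(x)$ be the sum of its first $t$ entries. Among binary vectors of weight $r$, $\lambda_r$ maximizes every prefix sum, since $P_t(\lambda_r)=\min(t,r)\ge P_t(\beta)$. Therefore
$$P_t(\hat b_S(e_j))\le P_t(\hat b_{S_T}(e_j))\quad\text{for all } t,$$
with equality at $t=n$. Both vectors are binary, so consider the first position $t$ where they differ. There the prefix inequality forces $\hat b_{S_T}(e_j)$ to have $1$ and $\hat b_S(e_j)$ to have $0$. Hence $\mathrm{seq}(\hat b_{S_T}(e_j))$ is lexicographically smaller, which proves $\hat b_{S_T}(e_j)\preceq\hat b_S(e_j)$.

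\textbf{Step 4: the equality case.} Equality of the two vectors means every prefix sum is equal. Since each summand satisfies $P_t(\beta_i)\le P_t(\lambda_{r_i})$, equality of the sums forces $P_t(\beta_i)=P_t(\lambda_{r_i})$ for every $i$ and every $t$. This means $\beta_i=\lambda_{r_i}$ for all $i$, so $S$ and $S_T$ agree on every incoming branch edge. Conversely, if they agree on all incoming branches, the formula in Step 1 gives equality immediately.
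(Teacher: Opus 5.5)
Your proof is correct, and it takes a cleaner route than the paper's. Both arguments start the same way: sum the exact flow identity over the trunk vertices to write $b_S(e_j)$ in terms of the branch labels. The paper then handles the extremal cases $p=0$ and $q=0$ separately. It writes $b_S(e_j)$ and $b_{S_T}(e_j)$ out separately for $\sigma_{v_j}(e_j)=\pm1$ and assumes, up to an ``analogous'' decreasing case, that $r_1<r_2<\cdots$. With that ordering it describes $\sum_i\lambda_{r_i}$ explicitly as a staircase vector. A counting argument at the first disagreeing coordinate $c$ (with $r_{t-1}<c\le r_t$) then shows that $\sum_i\beta_i$ has entry $q-t$ there rather than $q-t+2$. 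The equality case is an induction peeling off $\beta_1=\lambda_{r_1}$, then $\beta_2=\lambda_{r_2}$, and so on, which again uses the monotone ordering. Your inequality $P_t(\beta)\le\min(t,r)=P_t(\lambda_r)$ does all of this at once. It never uses the relative order of the branch colors, so the bookkeeping over cases (A)--(D) that you were worried about is unnecessary. It absorbs the extremal cases $p+q=1$ as empty sums, and it gives the equality case from termwise equality of prefix sums. It also proves something stronger: $\hat b_{S_T}(e_j)$ dominates $\hat b_S(e_j)$ in the prefix-sum order, which is finer than the lexicographic comparison the lemma asks for. The paper's version additionally identifies the exact value of $\sum_i\beta_i$ at the first disagreement, but nothing downstream uses that. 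One small correction to Step 1: when $e_j$ points out of $v_j$, the identity is $\hat b_S(e_j)=\vec 1+\sum_i\beta_i-\sum_i\lambda_{h_i}$, not literally the same identity as in the other orientation. Since the $\vec 1$ is the same for $S$ and $S_T$, your Step 2 difference formula $\hat b_S(e_j)-\hat b_{S_T}(e_j)=\sum_i(\beta_i-\lambda_{r_i})$ holds exactly as written in both orientations.
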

\begin{proof}
First, we consider the two extremal cases. If there are no incoming branches at $v_j$, then $e_j$ is an edge of weight $h_1$ oriented out of the boundary. By assumption then, $\hat{b}_{S_T}(e_j)= \hat{b}_S(e_j)$. Now say there are no outgoing branches at $v_j$. Then $e_j$ is an edge of weight $r_1$ oriented into the boundary, and $b_{S_T}(e_j)=\lambda_{r_1}$. This is the lexicographically minimal choice, so $\hat{b}_{S_T}(e_j)\preceq \hat{b}_S(e_j)$.

For the remainder of the argument, say $p,q\geq 1$, and let $\beta_1, \ldots, \beta_q$ be the binary labels assigned by $S$ to the incoming branches colored $r_1,\ldots, r_q$, respectively. Recall that $S_T$ assigns each incoming (resp. outgoing) branch a single simple strand of color $r_i$ (resp. $h_i$), so by Theorem~\ref{thm:bijection} $S_T$ assigns binary labels $\lambda_{r_i}$
(resp. $\lambda_{h_i}$) to these branches. The binary labels for trunk edges are completely determined by the labels on the branches, by iterating Lemma~\ref{lem: flow condition on binary labels} at each vertex of the trunk. In particular, we have $$b_{S}(e_j)=\begin{cases} 
\sum_{i=1}^q \beta_i-\sum_{i=1}^p \lambda_{h_i} & \textup{if } \sigma_{v_j}(e_j)=1\\
\sum_{i=1}^p \lambda_{h_i}-\sum_{i=1}^q \beta_i & \textup{if } \sigma_{v_j}(e_j)=-1
\end{cases} \textup{ and }$$ 
$$b_{S_T}(e_j)=\begin{cases} 
\sum_{i=1}^q \lambda_{r_i}-\sum_{i=1}^p \lambda_{h_i} & \textup{if } \sigma_{v_j}(e_j)=1\\
\sum_{i=1}^p \lambda_{h_i}-\sum_{i=1}^q \lambda_{r_i} & \textup{if } \sigma_{v_j}(e_j)=-1
\end{cases}.$$
We will assume $0=r_0<r_1<\cdots$ for the remainder of the argument. The case that these values decrease with index is analogous. Observe that $$\sum_{i=1}^q \lambda_{r_i}=(q,\ldots, q,q-1,\ldots, q-1, \ldots, 1, \ldots, 1, 0, \ldots, 0)$$ has $q-t+1$ in entries $r_{t-1}+1$ through $r_t$ where $1\leq t\leq q$ and 0 elsewhere. 

Say $S$ and $S_T$ disagree at $e_j$. Because both $b_S(e_j)$ and $b_{S_T}(e_j)$ are binary vectors, each of their entries differs by at most 1. Consider the first entry $c$ where they differ, and note that this implies the $c$th entries of $\sum_{i=1}^{q} \beta_i$ and $\sum_{i=1}^{q} \lambda_{r_i}$ differ by 1. Say $r_{t-1}< c\leq r_{t}$. 
Since $\beta_i$ is a binary vector with exactly $r_i$ nonzero entries, it follows that $\sum_{i=1}^q \beta_i$ has at most $r_t$ entries larger than $q-t$. Since $\sum_{i=1}^{q} \beta_i$ and $\sum_{i=1}^{q} \lambda_{r_i}$ agree prior to position $c$, the first $r_{t-1}$ entries of $\sum_{i=1}^{q} \beta_i$ are larger than $q-t+1$. The $c$th entry of $\sum_{i=1}^{q} \lambda_{r_i}$ is $q-t+1$. Since $\sum_{i=1}^{q} \beta_i$ cannot have another entry larger than $q-t+1$, we conclude the $c$th entry of $\sum_{i=1}^{q} \beta_i$ must be $q-t$. 

Since the $c$th entry of $\sum_{i=1}^{q} \beta_i$ is smaller than that of $\sum_{i=1}^{q} \lambda_{r_i}$, we get that when $\sigma_{v_j}(e_j)=1$, the $c$th entry of $b_S(e_j)=\sum_{i=1}^q \beta_i-\sum_{i=1}^{p} \lambda_{h_i}$ is 0 and the $c$th entry of $b_{S_T}(e_j)=\sum_{i=1}^q \lambda_{r_i}-\sum_{i=1}^{p} \lambda_{h_i}$ is 1. On the other hand, if $\sigma_{v_j}(e_j)=-1$, we have the $c$th entry of $b_S(e_j)=\sum_{i=1}^{p} \lambda_{h_i}-\sum_{i=1}^q \beta_i$ is 1 and the $c$th entry of $b_{S_T}(e_j)=\sum_{i=1}^{p} \lambda_{h_i}-\sum_{i=1}^q \lambda_{r_i}$ is 0. When $\sigma_{v_j}(e_j)=1$, we have $\hat{b}_S(e_j)=b_{S}(e_j)$ and $\hat{b}_{S_T}(e_j)=b_{S_T}(e_j)$. When $\sigma_{v_j}(e_j)=-1$, we have $\hat{b}_S(e_j)=\vec{1}-b_{S}(e_j)$ and $\hat{b}_{S_T}(e_j)=\vec{1}-b_{S_T}(e_j)$. Therefore we get $\hat{b}_{S_T}(e_j)\prec \hat{b}_S(e_j)$ for both values of $\sigma_{v_j}(e_j)$.

Now say $S$ and $S_T$ agree at $e_j$ so that $\sum_{i=1}^q \beta_i=\sum_{i=1}^q \lambda_{r_i}$. Since $\sum_{i=1}^q \lambda_{r_i}$ has $q$ in the first $r_1$ entries, so does $\sum_{i=1}^q \beta_i$. Thus, all vectors in the set $\{\beta_1, \ldots, \beta_q\}$ have one in the first $r_1$ entries. Since $\beta_1$ has exactly $r_1$ nonzero entries, we conclude $\beta_1=\lambda_{r_1}$. Continuing inductively, there are $q-t+1$ vectors in the set $\{\lambda_{r_1}, \ldots \lambda_{r_{t-1}},\beta_t, \ldots, \beta_q\}$ with 1 in the first $r_t$ entries.  Since $r_1<\cdots <r_{t-1}<r_t$, it must be vectors $\{\beta_t, \ldots, \beta_q\}$ with 1 in the first $r_i$ entries. Since $\beta_t$ has exactly $r_t$ nonzero entries, $\beta_t=\lambda_{r_t}$.
\end{proof}

\begin{theorem}\label{thm: matching is lex-leading}
Let $T\in\mathcal{RST}_n(\vec{k})$, $M_T$ be any colored noncrossing
matching for $T$, and $G_T$ and $S_T$ be the web and stranding produced
by Construction~\ref{def: G_T and S_T}. Then $S_T$ is a leading
term stranding of $G_T$.
\end{theorem}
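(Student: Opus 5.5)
We will show that any leading term stranding $S^\star$ of $G_T$ satisfies $x_{S^\star}=x_{S_T}$. Then $x_{S_T}=\mathrm{lt}(f^{\uparrow}(G_T))$, since $x_{S_T}$ appears with nonzero coefficient by Theorem~\ref{thm: nonzero coefficient}. The comparison proceeds boundary vertex by boundary vertex, from left to right, using the lexicographic order. Fix a leading term stranding $S^\star$. We prove by induction on $j$ that $\hat b_{S^\star}(e_{j'})=\hat b_{S_T}(e_{j'})$ for all $j'\le j$. We also prove the stronger structural statement that $S^\star$ agrees with $S_T$ on every edge of $G_T$ lying on, or sitting at a crossing of, an arc of $C_T$ all of whose endpoints are among $v_1,\dots,v_j$. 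The base case is Lemma~\ref{lem: vertex 1 lex-minimal}; there is no outgoing branch at $v_1$, since $1$ lies in rows $1,\dots,k_1$.

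For the inductive step at $v_j$, first argue that $S^\star$ agrees with $S_T$ on every outgoing branch at $v_j$. Each outgoing arc at $v_j$ has color $h_i$ and runs from $v_j$ to an in-endpoint $v_a$ with $a<j$. Follow this arc from its terminal end at $v_a$ back toward $v_j$. Near $v_a$ the edges are already known to agree by induction. Each time the arc passes a crossing, apply Lemma~\ref{lem: crossing disagreement}: if $S^\star$ agrees with $S_T$ on the outgoing edges of the crossing, it agrees on the incoming ones. The crossing's outgoing edges point toward endpoints further left, since both crossing arcs are clockwise. So we should order all crossings and branch edges of arcs with endpoints in $\{v_1,\dots,v_j\}$ by a potential such as (left endpoint, then position along the arc) and propagate agreement backwards along this order. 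Once agreement holds on all outgoing branches at $v_j$, Lemma~\ref{lem: trunk disagreement} gives $\hat b_{S_T}(e_j)\preceq \hat b_{S^\star}(e_j)$.

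Now use leading-term minimality. By induction, the tensor factors $1,\dots,j-1$ of $x_{S^\star}$ and $x_{S_T}$ coincide. If $\hat b_{S_T}(e_j)\prec\hat b_{S^\star}(e_j)$ strictly, then $x_{S_T}\prec x_{S^\star}$, which contradicts $S^\star$ being leading since $x_{S_T}$ is a term. Hence equality holds. The second half of Lemma~\ref{lem: trunk disagreement} then gives agreement on all incoming branches at $v_j$, and the trunk labels agree too, being determined by the branches through Lemma~\ref{lem: flow condition on binary labels}. This extends the structural inductive hypothesis to arcs beginning at $v_j$ and completes the step. After $j=m$ we get $x_{S^\star}=x_{S_T}$.

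The main obstacle is the propagation step: showing that every edge on an outgoing arc from $v_j$ between $v_a$ and $v_j$ is reached, and that at every crossing met along the way both outgoing edges are already known to agree. The second arc through a crossing may have its in-endpoint to the right of $v_j$. I would handle this using Corollary~\ref{cor: Arcs and vertical edges} together with the depth structure from Lemma~\ref{lem:nearby lemma}, inducting on depth inside the region bounded by the arcs among $v_1,\dots,v_j$. Failing that, I would strengthen the inductive hypothesis to agreement on all edges of $G_T$ to the left of a sweeping curve from the boundary that passes just right of $v_j$. Condition (c) of Construction~\ref{def:tableau matching}, that arcs cross at most once, should ensure this sweep meets crossings in an order compatible with Lemma~\ref{lem: crossing disagreement}.
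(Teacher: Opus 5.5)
Your overall architecture matches the paper's. You compare boundary factors from left to right, anchor at $v_1$ with Lemma~\ref{lem: vertex 1 lex-minimal}, and at $v_j$ combine Lemma~\ref{lem: trunk disagreement} with minimality of $x_{S^\star}$. The paper phrases this as a contradiction at the first $e_j$ where $\hat b_{S^\star}$ and $\hat b_{S_T}$ differ, which is the contrapositive of your induction.

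But there is a genuine gap. The step you flag as the main obstacle, agreement on every outgoing branch at $v_j$, is the heart of the proof, and your proposal does not establish it. The order you suggest (left endpoint, then position along the arc) fails, because Lemma~\ref{lem: crossing disagreement} needs agreement on \emph{both} outgoing edges of a crossing. Suppose $(v_a,v_j)$ crosses $(v_c,v_d)$ with $a<c<j<d$. The outgoing edge on $(v_c,v_d)$ heads toward $v_c$. Your potential processes it after $(v_a,v_j)$, and your structural hypothesis never covers it, since $d>j$. That segment can itself pass through further such crossings. In the matching $M_T$ of Example~\ref{ex: same leading term} with $j=4$, walking back from $v_1$ along $(v_1,v_4)$ you first meet $(v_3,v_6)$. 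The segment of $(v_3,v_6)$ toward $v_3$ then crosses $(v_2,v_5)$, and neither of these arcs has all its endpoints among $v_1,\dots,v_4$. (Incidentally, an arc crossing $(v_a,v_j)$ always has its in-endpoint strictly left of $v_j$; it is the out-endpoint that can lie to the right.)

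The missing ingredient is a well-founded order on crossings and trunks rather than on arcs. The paper gets it from Lemma~\ref{lem: (1,n) strands} and Corollary~\ref{cor: Arcs and vertical edges}. Every branch, and every non-vertical edge at a crossing, lies on $L_{(1,n)}(S_T)$. The components of $L_{(1,n)}(S_T)$ can be drawn as clockwise half circles directed with the edges, so each such edge has its head strictly left of its tail. Consequently the following two moves each convert a disagreement on an incoming edge into a disagreement on an outgoing edge whose head lies strictly further left:
\begin{itemize}
\item Lemma~\ref{lem: crossing disagreement} at a crossing;
\item Lemma~\ref{lem: trunk disagreement} at the trunk of some $v_a$ with $a<j$. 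Here $\hat b_{S^\star}(e_a)=\hat b_{S_T}(e_a)$ is already known, so its equality clause applies.
\end{itemize}
Starting from a disagreeing outgoing branch $f_1$ at $v_j$, this produces $f_2,f_3,\dots$ with heads moving strictly left. These edges are therefore pairwise distinct, which is impossible in a finite graph.

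In your forward language, this means propagating agreement in order of the horizontal position of heads in that drawing. In the example above, you would first handle the crossing of $(v_2,v_5)$ with $(v_3,v_6)$, whose outgoing edges are $e_2$ and $e_3$. With that order in place, the rest of your inductive step goes through as written.
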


\begin{proof}

By Corollary \ref{cor: Arcs and vertical edges}, we may assume the $(1,n)$-strands of $S_T$ are clockwise open strands drawn as half circles, and the remaining edges of $G_T$ are vertical, connecting nested pairs of these strands. 

    Let $S^{\star}$ be a leading term stranding of $G_T$, and say $x_{S_T}\neq x_{S^\star}$. Let $e_1,\ldots, e_m$ be the boundary edges of $G_T$, and let $e_j$ be the first edge where $\hat{b}_{S_T}(e_j)\neq \hat{b}_{S^{\star}}(e_j)$.  By Lemma \ref{lem: vertex 1 lex-minimal}, we know $j>1$. First, we argue that $S_T$ and $S^{\star}$ disagree on some outgoing branch of the trunk at $v_j$.

    There must be outgoing branches on the trunk at $v_j$ since otherwise $\hat{b}_{S_T}(e_j)$ is lexicographically minimal. If there are no incoming branches, $e_j$ is an outgoing branch on which $S_T$ and $S^{\star}$ disagree. Now say there are both incoming and outgoing branches on the trunk at $v_j$, and let $h_1, \ldots, h_p$ and $r_1,\ldots, r_q$ be the colors of outgoing and incoming branches, respectively. Since $S^{\star}$ is a leading term stranding, we have $\hat{b}_{S^{\star}}(e_j)\prec \hat{b}_{S_T}(e_j)$. By Lemma \ref{lem: trunk disagreement}, $S^{\star}$ and $S_T$ must disagree on some outgoing branch of the trunk at $v_j$. Denote this edge by $f_1$. Since branches separate faces of different depth, by Lemma~\ref{lem: (1,n) strands}, $f_1$ lies on some $(1,n)$-strand of $S_T$. 
    
    Because $S_T$ and $S^{\star}$ agree at boundary vertices to the left of $v_j$, the head of $f_1$ is not incident to the boundary, so  $f_1$ has its head at an interior vertex endpoint of a vertical trunk or crossing edge of $G_T$. By Lemmas \ref{lem: trunk disagreement} and \ref{lem: crossing disagreement}, there is an outgoing edge $f_{2}$ of this trunk or crossing on which $S^\star$ and $S_T$ disagree. Since $f_{2}$ is an outgoing edge, it is part of $L_{(1,n)}(S_T)$. Moreover, its head is to the left of the head of $f_1$. Continuing in this way, we obtain an unending sequence $f_1, f_2, \ldots$ of distinct edges of $L_{(1,n)}(S_T)$ on which $S_T$ and $S^{\star}$ disagree. 

    Since $G_T$ is a finite graph this is not possible. We conclude $x_{S^{\star}}=x_{S_T}$, and $S_T$ is a leading term stranding of $G_T$ as desired.
\end{proof}

Theorems~\ref{thm: monomial of S_T} and~\ref{thm: matching is lex-leading} together
show that $S_T$ is a leading term stranding of $G_T$ with $T_{S_T}=T$. As a result, we have $|\{G_T: T\in \mathcal{RST}_n(\vec{k})\}|=|\mathcal{RST}_n(\vec{k})|$, and
Corollary~\ref{cor: basis criterion via tableaux} gives the following.

\begin{corollary}\label{cor: matching produces basis}
For each $T\in\mathcal{RST}_n(\vec{k})$, fix a colored noncrossing matching $M_T$ and let $G_T$ be the resulting web. Then the set $\{G_T : T\in \mathcal{RST}_n(\vec{k})\}$ is a web basis for $\textup{Inv}_n(\vec{k})$. That is, $\{f^{\uparrow}(G_T) : T\in \mathcal{RST}_n(\vec{k})\}$ is a basis of $\textup{Inv}_n(\vec{k})$.
\end{corollary}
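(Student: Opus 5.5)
The corollary follows from Corollary~\ref{cor: basis criterion via tableaux} applied to $B=\{G_T : T\in\mathcal{RST}_n(\vec{k})\}$, with $S^\star(G_T):=S_T$. Three hypotheses need checking: each $G_T$ lies in $F_n(\vec{k})$, $S_T$ is a leading term stranding of $G_T$, and the tableaux $T_{S_T}$ exhaust $\mathcal{RST}_n(\vec{k})$ while $|B|=|\mathcal{RST}_n(\vec{k})|$.

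First, Corollary~\ref{cor: G_T in F_n(k)} gives $G_T\in F_n(\vec{k})$, and Lemma~\ref{lem: G_T is a stranded web} gives $S_T\in\mathcal{S}tr(G_T)$. Theorem~\ref{thm: matching is lex-leading} then shows $S_T$ is a leading term stranding of $G_T$, and this holds for whichever colored noncrossing matching $M_T$ was fixed. Since the leading term $\mathrm{lt}(f^{\uparrow}(G_T))$ is intrinsic to the vector, the choice of leading term stranding is harmless: any leading term stranding has the same strand monomial and hence the same tableau.

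Second, Theorem~\ref{thm: monomial of S_T} gives $T_{S_T}=T$, so the map $T\mapsto T_{S_T}$ is the identity on $\mathcal{RST}_n(\vec{k})$. There is one subtlety about $|B|$: we must rule out $G_T=G_{T'}$ for $T\neq T'$. If the two webs coincided, they would have the same leading term, and therefore $T=T_{S_T}=T_{S_{T'}}=T'$. So $T\mapsto G_T$ is injective and $|B|=|\mathcal{RST}_n(\vec{k})|$. It is cleanest to regard $B$ as indexed by tableaux.

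Finally, invoke Corollary~\ref{cor: basis criterion via tableaux}. Its triangularity argument runs as follows. The vectors $f^{\uparrow}(G_T)$ have pairwise distinct leading monomials $x_{S_T}$, so they are linearly independent. There are $|\mathcal{RST}_n(\vec{k})|=\dim\textup{Inv}_n(\vec{k})$ of them by Theorem~\ref{thm: invariant space dimension}, and all of them lie in $\textup{Inv}_n(\vec{k})$ by Theorem~\ref{thm: web vector}, so they form a basis. I expect no real obstacle here; the only point needing care is the injectivity and well-definedness remark above, since all the substantive work is in Theorem~\ref{thm: matching is lex-leading}.
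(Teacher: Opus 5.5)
Your proposal is correct and follows the paper's route. The paper combines Theorems~\ref{thm: monomial of S_T} and~\ref{thm: matching is lex-leading} to get that $S_T$ is a leading term stranding with $T_{S_T}=T$, deduces $|\{G_T\}|=|\mathcal{RST}_n(\vec{k})|$, and applies Corollary~\ref{cor: basis criterion via tableaux}. Your injectivity check (coinciding webs would share a leading monomial and hence a tableau) just spells out the counting step the paper states in one line.
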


To end this section, we include an example of distinct web vectors obtained from the same tableau via our algorithm. 

\begin{example}\label{ex: same leading term}
Let $\vec{k}=(2,3,1,2,1,3)$, and let $T\in\mathcal{RST}_4(\vec{k})$ be
\[
T=\ytableaushort{123,126,246,456}.
\]
The pairing of Construction~\ref{def:tableau matching} has arcs $(v_1, v_4)$ in color $2$,
$(v_2,v_5)$ in color $3$, and $(v_3, v_6)$ in color $1$. The three arcs pairwise cross,
and such a diagram admits two isotopy classes as shown in Figure~\ref{fig: same leading term}. Write $M_T$ and $M'_T$ for the two matchings and $G_T$, $G'_T$
for the webs obtained from them by Construction~\ref{def: G_T and S_T}. The recursive construction of \cite{FON} returns
$G'_T$.

\begin{figure}[htbp]
\centering
\begin{tikzpicture}[
  mtarc/.style={line width=1.1pt, decoration={markings,
      mark=at position #1 with {\arrow{Stealth[length=2.4mm]}}},
      postaction={decorate}},
  mtarc/.default=0.5,
  gedge/.style={line width=0.7pt, decoration={markings,
      mark=at position #1 with {\arrow{Stealth[length=2.2mm]}}},
      postaction={decorate}},
  gedge/.default=0.55,
  bpt/.style={fill,circle,inner sep=1.4pt},
  ivt/.style={fill,circle,inner sep=1.7pt},
  wt/.style={font=\scriptsize},
  scale=1,
]

\def\bdry{%
  \draw[line width=0.6pt, dashed, dash pattern=on 2.2pt off 2.2pt,
        {Stealth[length=2.2mm]}-{Stealth[length=2.2mm]}] (-0.95,0) -- (5.95,0);
  \foreach \x/\n in {0/1,1/2,2/3,3/4,4/5,5/6} {
    \node[bpt] at (\x,0) {};
    \node[font=\scriptsize,above] at (\x,0.02) {$v_{\n}$};
  }
}

\begin{scope}[shift={(0,0)}]
  \bdry
  \draw[mtarc=0.5,red]            (3,0) arc (0:-180:1.5);
  \draw[mtarc=0.5,green!55!black] (4,0) arc (0:-180:1.5);
  \draw[mtarc=0.5,blue]           (5,0) arc (0:-180:1.5);
  \node[wt] at (1.50,-1.78) {$2$};
  \node[wt] at (2.50,-1.78) {$3$};
  \node[wt] at (3.50,-1.78) {$1$};
  \node[font=\small] at (2.5,-2.35) {$M'_T$};
\end{scope}

\begin{scope}[shift={(7.5,0)}]
  \bdry
  \draw[mtarc=0.5,red]             (3,0) .. controls (2.6,-2.2) and (0.4,-2.2) .. (0,0);
  \draw[mtarc=0.45,green!55!black] (4,0) .. controls (3.6,-0.9) and (1.4,-0.9) .. (1,0);
  \draw[mtarc=0.5,blue]            (5,0) .. controls (4.6,-2.2) and (2.4,-2.2) .. (2,0);
  \node[wt] at (1.50,-1.92) {$2$};
  \node[wt] at (2.50,-0.42) {$3$};
  \node[wt] at (3.50,-1.92) {$1$};
  \node[font=\small] at (2.5,-2.35) {$M_T$};
\end{scope}

\begin{scope}[shift={(0,-5.2)}]
  \bdry
  \coordinate (UAC) at (2.5,-0.80);  \coordinate (LAC) at (2.5,-1.50);
  \coordinate (UAB) at (1.6,-2.00);  \coordinate (LAB) at (1.6,-2.70);
  \coordinate (UBC) at (3.4,-2.00);  \coordinate (LBC) at (3.4,-2.70);

  \draw[gedge=0.6] (UAC) to[out=115,in=-70] (2,0);      \node[wt] at (2.02,-0.50) {$1$};
  \draw[gedge=0.5] (3,0)  to[out=-115,in=65] (UAC);     \node[wt] at (3.00,-0.50) {$2$};
  \draw[gedge=0.5] (4,0)  to[out=-125,in=50] (UBC);     \node[wt] at (3.90,-1.15) {$3$};
  \draw[gedge=0.5] (5,0)  to[out=-140,in=15] (LBC);     \node[wt] at (4.55,-1.75) {$1$};
  \draw[gedge=0.5] (UAB) to[out=125,in=-55] (1,0);      \node[wt] at (1.10,-1.15) {$3$};
  \draw[gedge=0.5] (LAB) to[out=155,in=-40] (0,0);      \node[wt] at (0.45,-1.75) {$2$};

  \draw[gedge=0.5] (LAC) to[out=200,in=25] (UAB);       \node[wt] at (2.15,-1.45) {$2$};
  \draw[gedge=0.5] (UBC) to[out=155,in=-25] (LAC);      \node[wt] at (2.90,-1.55) {$1$};
  \draw[gedge=0.5] (LBC) -- (LAB);                      \node[wt] at (2.50,-2.90) {$3$};

  \draw[gedge=0.6] (UAC) -- (LAC);                      \node[wt,right] at (2.56,-1.15) {$1$};
  \draw[gedge=0.6] (LAB) -- (UAB);                      \node[wt,left]  at (1.54,-2.35) {$1$};
  \draw[gedge=0.6] (UBC) -- (LBC);                      \node[wt,right] at (3.46,-2.35) {$2$};

  \foreach \p in {UAC,LAC,UAB,LAB,UBC,LBC} {\node[ivt] at (\p) {};}
  \node[font=\scriptsize] at (2.72,-0.86) {$u$};
  \node[font=\small] at (2.5,-3.45) {$G'_T$};
\end{scope}

\begin{scope}[shift={(7.5,-5.2)}]
  \bdry
  \coordinate (uBC) at (2.10,-0.90);  \coordinate (lBC) at (2.10,-1.60);
  \coordinate (uAB) at (2.90,-0.90);  \coordinate (lAB) at (2.90,-1.60);
  \coordinate (uAC) at (2.50,-2.40);  \coordinate (lAC) at (2.50,-3.10);

  \draw[gedge=0.6] (uBC) to[out=100,in=-80] (2,0);      \node[wt,left]  at (2.02,-0.45) {$1$};
  \draw[gedge=0.5] (3,0)  to[out=-100,in=80] (uAB);     \node[wt,right] at (2.98,-0.45) {$2$};
  \draw[gedge=0.5] (uAB) -- (uBC);                      \node[wt] at (2.50,-0.72) {$3$};
  \draw[gedge=0.5] (lBC) to[out=165,in=-45] (1,0);      \node[wt] at (1.35,-1.05) {$3$};
  \draw[gedge=0.5]  (4,0) .. controls (3.9,-2.15) and (3.2,-2.05) .. (lAB);
        \node[wt] at (3.95,-1.35) {$3$};
  \draw[gedge=0.45] (5,0) .. controls (5.0,-3.6) and (3.1,-3.7) .. (lAC);
        \node[wt] at (4.55,-2.75) {$1$};
  \draw[gedge=0.45] (lAC) .. controls (1.9,-3.6) and (0,-3.3) .. (0,0);
        \node[wt] at (0.75,-3.00) {$2$};

  \draw[gedge=0.5] (lAB) to[out=210,in=30] (uAC);       \node[wt] at (2.86,-2.05) {$2$};
  \draw[gedge=0.5] (uAC) to[out=150,in=-30] (lBC);      \node[wt] at (2.14,-2.05) {$1$};

  \draw[gedge=0.6] (uBC) -- (lBC);                      \node[wt,left]  at (2.04,-1.25) {$2$};
  \draw[gedge=0.6] (lAB) -- (uAB);                      \node[wt,right] at (2.96,-1.25) {$1$};
  \draw[gedge=0.6] (uAC) -- (lAC);                      \node[wt,right] at (2.56,-2.75) {$1$};

  \foreach \p in {uBC,lBC,uAB,lAB,uAC,lAC} {\node[ivt] at (\p) {};}
  \node[font=\small] at (2.5,-3.90) {$G_T$};
\end{scope}

\end{tikzpicture}
\caption{Two colored noncrossing matchings for the tableau $T$ of
Example~\ref{ex: same leading term} and the webs they produce. Blue, red, and green represent colors 1, 2, and 3, respectively.}
\label{fig: same leading term}
\end{figure}

By Theorem~\ref{thm: matching is lex-leading}, $G_T$ and $G'_T$ have web vectors with the same leading term. However, the vectors themselves differ. Consider the
boundary data with $\hat b$-values $1010,\,0111,\,1000,\,1001,\,0100,\,0111$. One can verify that $G_T$ supports a stranding with this boundary data, so the monomial \[
x_1\wedge x_3\otimes x_2\wedge x_3\wedge x_4\otimes x_1\otimes
x_1\wedge x_4\otimes x_2\otimes x_2\wedge x_3\wedge x_4 
\] occurs in
$f^{\uparrow}(G_T)$ with nonzero coefficient by
Theorem~\ref{thm: nonzero coefficient}. On the other hand, the boundary
edges of $G'_T$ at $v_3$ and $v_4$ meet at the vertex $u$ of
Figure~\ref{fig: same leading term}, and there is no way to validly label the third edge at $u$ compatible with the given boundary data. Hence the monomial
does not occur in $f^{\uparrow}(G'_T)$, and $f^{\uparrow}(G_T)\neq f^{\uparrow}(G'_T)$.
\end{example}

\section{Leading term strandings for \texorpdfstring{$\mathfrak{sl}_3$}{sl3} webs}\label{section: sl3 stranding}

In the preceding section, we focused on constructing $\mathfrak{sl}_n$ webs with specific leading terms. Here, we focus instead on finding the leading term for an arbitrary $\mathfrak{sl}_3$ web. To simplify our discussion of $\mathfrak{sl}_3$ webs, we will make use of the {\bf edge flip relation} from the kernel of $f^{\uparrow}$ defined as follows.

\begin{definition}[Edge flip]\label{def: edge flip}
For an edge $e=u\stackrel{\ell}{\mapsto}v$ of an $\mathfrak{sl}_n$ web $G$, the {\bf edge flip} of $e$ is $\varphi(e)=v\stackrel{n-\ell}{\longmapsto}u$. For $\mathcal{E}\subseteq E(G)$, write $G_{\varphi(\mathcal{E})}$ for the web obtained from $G$ by flipping every edge in $\mathcal{E}$.
\end{definition}

\begin{lemma}[{\cite[Lemma~44]{RTStranding}}]\label{lem: edge flip relation}
For any $G\in F_n(\vec{k})$ and any $\mathcal{E}\subseteq E(G)$, $f^{\uparrow}(G)=f^{\uparrow}(G_{\varphi(\mathcal{E})})$.
\end{lemma}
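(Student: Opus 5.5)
The plan is to prove the statement for a single edge $e=u\stackrel{\ell}{\mapsto}v$ and then induct on $|\mathcal{E}|$. Flips of distinct edges commute, and each flip keeps us inside $F_n(\vec{k})$, so the single-edge case suffices. First I will check that $G':=G_{\varphi(\{e\})}$ is again in $F_n(\vec{k})$.

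\emph{$G'$ is a web with the same boundary data.} At an interior endpoint $w$ of $e$, the sign $\sigma_w$ changes and the weight changes from $\ell$ to $n-\ell$. The contribution $\sigma_w(e)\ell$ therefore becomes $-\sigma_w(e)(n-\ell)\equiv \sigma_w(e)\ell \pmod n$, so the congruence of Definition~\ref{def: webs} survives. If $e$ is a boundary edge, its boundary weight is $\ell$ in one orientation and $n-(n-\ell)=\ell$ in the other, so $\vec{k}$ is unchanged.

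\emph{A bijection on labelings.} I will work with binary labelings rather than strandings and use Theorem~\ref{thm:bijection} to pass between them. Given $b\in\mathcal{B}in(G)$, define $b'$ on $G'$ by $b'(\varphi(e))=\vec 1-b(e)$ and $b'=b$ on every other edge. This is a binary labeling, since $|\vec 1-b(e)|=n-\ell$. For validity, the key identity is
\[
\alpha_c^\vee(\vec 1-b(e))=-\alpha_c^\vee(b(e)).
\]
Combined with the sign change of $\sigma_w$ at each endpoint, it shows that every product $\sigma_w\,\alpha_c^\vee$ is unchanged. Hence the vertex sums of Definition~\ref{def: binary labeling} are unchanged and $b'\in\mathcal{B}in(G')$. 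The map is an involution (flip twice), so it is a bijection $\mathcal{S}tr(G)\to\mathcal{S}tr(G')$.

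\emph{The two web vectors agree term by term.} Two things must be matched.

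The monomials agree. If $e=e_j$ is a boundary edge, then $\sigma_{v_j}$ changes sign. One of $\hat b$, $\hat b'$ is the label itself and the other is its complement, so for instance $\hat b'(\varphi(e_j))=\vec 1-(\vec 1-b(e_j))=b(e_j)=\hat b(e_j)$. Thus $x_{S'}=x_S$.

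The exponents agree. For any $i<j$, the sets $L^+(e)$ and $L^-(e)$ are swapped under $b\mapsto \vec 1-b$, while the edge direction is also reversed. By Definition~\ref{def: ij-strands}, each curve system $L_{(i,j)}$ therefore traverses the underlying segment of $e$ in the same geometric direction. So the $(i,j)$-strands of $S$ and $S'$ are literally the same oriented curves in the half-plane, with the same open/closed status and the same clockwise or counterclockwise orientation. Hence $\mathrm{cw}(S)-\mathrm{ccw}(S)=\mathrm{cw}(S')-\mathrm{ccw}(S')$.

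Summing over the bijection in Definition~\ref{def:vector} then gives $f^{\uparrow}(G)=f^{\uparrow}(G')$. The main point requiring care is the bookkeeping of signs at a boundary edge, where the definition of $\hat b$ and the change in $\sigma_{v_j}$ must cancel exactly; I expect this to reduce to the short computation in the monomial step. The rest is routine once everything is phrased in terms of binary labelings.
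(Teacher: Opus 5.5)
Your proof is correct. The paper gives no argument for this lemma; it quotes it from \cite[Lemma~44]{RTStranding} and uses it only to normalize $\mathfrak{sl}_3$ webs to weight-$1$ edges in Section~\ref{section: sl3 stranding}. What you have done is give a self-contained derivation directly from Definition~\ref{def:vector}. That is legitimate here, because this paper takes the state sum as the definition of $f^{\uparrow}$.

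The key observation is that flipping $e$ negates both $\sigma_w(e)$ and every $\alpha_c^\vee(b(e))$. Consequently the stranding corresponding to $\vec 1-b(e)$ on $\varphi(e)$ is literally the same colored oriented curve system as $S$. Likewise every $(i,j)$-strand is the same oriented curve, with the same open/closed status and the same decomposition into components. The complement $\vec 1-b(e)$ is only visible at a boundary edge, where it is cancelled exactly by the definition of $\hat b$. This makes the relation a tautology of the stranding picture rather than an imported fact.

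Your route also sidesteps a point left implicit in the citation. The lemma in \cite{RTStranding} concerns $f$, whereas the statement here is for $f^{\uparrow}$. Transferring it uses that the global power of $q$ relating the two depends only on $\vec k$, which is preserved by flips — exactly the fact you check in your first step.

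One small bookkeeping point: an edge may join two boundary vertices. In that case your monomial computation must be applied at both endpoints, where it goes through verbatim, and there is no interior-vertex condition to check.
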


By the edge flip relation, we may assume that all $\mathfrak{sl}_3$ web edges have weight $1$. For such webs, the congruence condition on edge weights at interior vertices forces every interior vertex to be a source or sink. We adopt these assumptions throughout this section.

\subsection{Leading term strandings for simple \texorpdfstring{$\mathfrak{sl}_3$}{sl3} webs}

Say $e$ is an edge of some $\mathfrak{sl}_3$ web $G$ with face $F$ to its left and $F'$ to its right. Given $S\in \mathcal{S}tr(G)$, there are three possibilities:
\begin{enumerate}
    \item $b_S(e)=100$, $S^+(e)=\{1\}$, $S^-(e)=\emptyset$, and $d_S(F)<d_S(F')$;
    \item $b_S(e)=010$; $S^+(e)=\{2\}$, $S^-(e)=\{1\}$, and $d_S(F)=d_S(F')$; or
    \item $b_S(e)=001$, $S^+(e)=\emptyset$, $S^-(e)=\{2\}$, and $d_S(F)>d_S(F')$.
\end{enumerate}
  At the same time, integral depth can behave in exactly three ways on either side of $e$: $d(F) < d(F')$, $d(F) = d(F')$, or $d(F) > d(F')$. 
  
  The main result of this subsection is that these two trichotomies align perfectly for so-called simple webs: for such webs, there is a unique stranding $S_{\mathrm{dep}}$ satisfying $d_{S_{\mathrm{dep}}}(F) = d(F)$ for every face $F$, and $S_{\mathrm{dep}}$ is a leading term stranding.

\begin{definition}[Flat vertex and simple webs]\label{def: simple web}
Let $G$ be a half-plane graph with interior vertex $v$.
\begin{itemize}
    \item We call $v$ a \textbf{flat vertex} if all integral depths of faces adjacent to $v$ are the same.
    \item The web $G$ is \textbf{simple} if it has no flat vertices.
\end{itemize}
\end{definition}

\begin{theorem}\label{thm:sl3 stranding}
Let $G$ be a simple $\mathfrak{sl}_3$ web.
\begin{enumerate}
\item[(i)] The edge-wise stranding choices illustrated in Figure~\ref{fig:sl3-edge-labels} constitute a valid stranding $S_{\mathrm{dep}}$ of $G$ satisfying $d_{S_{\mathrm{dep}}}(F) = d(F)$ for every face $F$ of $G$.
\item[(ii)] The stranding $S_{\mathrm{dep}}$ is a leading term stranding for $G$.
\end{enumerate}
\end{theorem}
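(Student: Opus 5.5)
For part (i), I will check that $S_{\mathrm{dep}}$ is well defined, then that it is valid, then compute its strand depths. I read Figure~\ref{fig:sl3-edge-labels} as follows: an edge $e$ with face $F$ on its left and $F'$ on its right receives $b(e)=100$, $010$, or $001$ according as $d(F)<d(F')$, $d(F)=d(F')$, or $d(F)>d(F')$.

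Integral depths of adjacent faces differ by at most one, since crossing a single edge is a step in $G^*$. So every edge falls into exactly one of the three cases, and $S_{\mathrm{dep}}$ is a well-defined binary labeling; each label has weight $1$, matching the edge weight.

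Validity is a vertex-local check. After the edge flips, every interior vertex $v$ is a source or a sink, so $\sigma_v$ is the same on all three incident edges. The condition of Definition~\ref{def: binary labeling} then says $b(e_1)+b(e_2)+b(e_3)$ is a constant vector. With weight-one labels, this holds exactly when $\{b(e_i)\}=\{100,010,001\}$.

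To see this, list the faces $F_1,F_2,F_3$ cyclically around $v$. Because all three edges are oriented the same way relative to $v$, the left/right convention is consistent as we go around, and the three differences $d(F_{i+1})-d(F_i)$ recorded by the labels lie in $\{-1,0,1\}$ and sum to $0$. Since $v$ is not flat, they are not all zero. The only such triples are permutations of $(1,0,-1)$, so the three labels are distinct. This is the one step where I must carefully match the orientation convention in the figure.

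Next, by the trichotomy listed before Definition~\ref{def: simple web}, crossing any edge changes $d_{S_{\mathrm{dep}}}$ by exactly the same amount as it changes $d$. Both functions vanish on $U$, and $G^*$ is connected, so $d_{S_{\mathrm{dep}}}=d$ on every face.

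For part (ii), let $S^\star$ be a leading term stranding and suppose $x_{S^\star}\neq x_{S_{\mathrm{dep}}}$. Then $x_{S^\star}\prec x_{S_{\mathrm{dep}}}$, because $x_{S_{\mathrm{dep}}}$ appears with nonzero coefficient by Theorem~\ref{thm: nonzero coefficient}. Let $j$ be the first boundary edge where the labels differ, so $\hat b_{S^\star}(e_j)\prec\hat b_{S_{\mathrm{dep}}}(e_j)$ in the order $100\prec010\prec001$. (Boundary edges may be taken directed into the boundary, so $\hat b=b$.)

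Let $B_j$ denote the boundary face immediately to the right of $v_j$. For any stranding $S$, the strand depth $d_S(B_j)$ is a partial sum over $j'\le j$ of boundary contributions. With the boundary on top and $e_{j'}$ pointing up into $v_{j'}$, the labels $100$, $010$, $001$ contribute $+1$, $0$, $-1$ respectively, by the trichotomy.

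The strandings $S^\star$ and $S_{\mathrm{dep}}$ agree on $e_1,\dots,e_{j-1}$, and replacing a label by a lex-smaller one raises its contribution by at least $1$. Therefore
$$d_{S^\star}(B_j)\ \ge\ d_{S_{\mathrm{dep}}}(B_j)+1 = d(B_j)+1,$$
where the equality uses part (i). This contradicts Lemma~\ref{lem: depth bound}, since the simple strands of $S^\star$ form a balanced curve system. Hence $x_{S^\star}=x_{S_{\mathrm{dep}}}$, and $S_{\mathrm{dep}}$ is a leading term stranding.

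I expect the main obstacle to be sign bookkeeping rather than any deep step. Specifically, I must confirm that "$F$ on the left" in the figure matches the direction in which boundary depths accumulate, and that the source and sink cases in (i) both give the cyclic-sum argument. If these conventions turn out reversed, the same argument should go through after replacing the face right of $v_j$ by the face left of it and reversing the order of the partial sums.
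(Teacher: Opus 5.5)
Your proof is correct and follows the paper's route. Part (i) is the same check that a non-flat source or sink sees three distinct labels. Part (ii) rests on the same ingredients: Lemma~\ref{lem: depth bound}, $d_{S_{\mathrm{dep}}}=d$, and boundary strand depths as partial sums. Your single first-disagreement contradiction, based on the fact that a lex-smaller $\hat b$ gives a strictly larger depth increment, compresses the paper's three-case induction and does not need Lemma~\ref{lem: open strands clockwise}.

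One small correction: under the weight-one convention you cannot assume every boundary edge points into the boundary, since it points out when $k_j=2$. You do not need that assumption, though. For an outward edge the complement $\hat b=\vec 1-b$ reverses the lex order on the three labels, and reversing the edge swaps its left and right faces, so the monotonicity you use still holds.
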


\begin{figure}[h]
\centering
\begin{tikzpicture}[scale=.75,
  edge/.style={line width=1.1pt, decoration={markings, mark=at position 0.5 with {\arrow{>}}}, postaction={decorate}},
  strand/.style={decoration={markings, mark=at position 0.75 with {\arrow{>}}}, postaction={decorate}},
  strandrev/.style={decoration={markings, mark=at position 0.25 with {\arrow{<}}}, postaction={decorate}}
]
\begin{scope}[xshift=0cm]
  \draw[black, edge] (0, 0) -- (0, 2.5);
  \draw[blue, line width=1.1pt, strand] (-0.1, 0) -- (-0.1, 2.5);
  \node[depthcolor] at (-.7, 1.25) {$d$};
  \node[depthcolor] at (.7,  1.25) {$d{+}1$};
  \node at (0, -0.4) {$100$};
  \filldraw[black] (-.05,0) circle (3pt);
  \filldraw[black] (-.05,2.5) circle (3pt);
\end{scope}
\begin{scope}[xshift=4.8cm]
  \draw[black, edge] (0, 0) -- (0, 2.5);
  \draw[red, line width=1.1pt, strand] (-0.12, 0) -- (-0.12, 2.5);
  \draw[blue, line width=1.1pt, strandrev] (0.12, 0) -- (0.12, 2.5);
  \node[depthcolor] at (-.7, 1.25) {$d$};
  \node[depthcolor] at (.6,  1.25) {$d$};
  \node at (0, -0.4) {$010$};
  \filldraw[black] (0,0) circle (3.5pt);
  \filldraw[black] (0,2.5) circle (3.5pt);
\end{scope}
\begin{scope}[xshift=9.6cm]
  \draw[black, edge] (0, 0) -- (0, 2.5);
  \draw[red, line width=1.1pt, strandrev] (0.1, 0) -- (0.1, 2.5);
  \node[depthcolor] at (-.7, 1.25) {$d$};
  \node[depthcolor] at (.7,  1.25) {$d{-}1$};
  \node at (0, -0.4) {$001$};
  \filldraw[black] (0.05,0) circle (3pt);
  \filldraw[black] (0.05,2.5) circle (3pt);
\end{scope}
\end{tikzpicture}
\caption{Unique edge stranding choices that align integral and strand depth in an $\mathfrak{sl}_3$ web along with corresponding binary labels. Depths are labeled in brown; Blue and red strands carry colors $1$ and $2$, respectively.}\label{fig:sl3-edge-labels}
\end{figure}

\begin{proof}
We first prove (i). Once we show $S_{\mathrm{dep}}$ is valid, it follows by induction with base step $d_{S_{\mathrm{dep}}}(U)=d(U)=0$ that $d_{S_{\mathrm{dep}}}(F)=d(F)$ on all faces $F$ of $G$. The stranding choices illustrated in Figure~\ref{fig:sl3-edge-labels} are valid on individual edges of $G$. Therefore, we need only check these choices are compatible at interior vertices. Let $v$ be an interior vertex of $G$. Since $G$ is simple and depth on either side of an edge can differ by at most one, exactly two distinct depths occur among the three faces adjacent to $v$. Therefore, at $v$ integral depth increases across one incident edge, decreases across another, and stays the same across the third. This implies each edge incident to $v$ has a different stranding pattern. Since $v$ is either a source or sink, these edge strandings meet compatibly at $v$, so $S_{\mathrm{dep}}$ is a valid stranding of $G$.

We now prove (ii). Denote by $v_1, \ldots, v_m$ the boundary vertices of $G$ read left to right, with corresponding boundary edges $e_1, \ldots, e_m$ and boundary faces $F_0,\ldots, F_m$ where $F_0=F_m=U$ and $F_i$ is incident to the boundary axis between edges $e_i$ and $e_{i+1}$ for $1\leq i <m$. Let $S^\star$ be a leading term stranding for $G$. We show $S_{\mathrm{dep}}(e_i)=S^\star(e_i)$ for all $1\leq i\leq m$, so that $S_{\mathrm{dep}}$ is also a leading term stranding.

We have $d(F_0)=d(U)=0$ and $d(F_1)=1$. If $\sigma_{v_1}(e_1)=1$, then $b_{S_{\mathrm{dep}}}(e_1)=\hat{b}_{S_{\mathrm{dep}}}(e_1)=100$. If $\sigma_{v_1}(e_1)=-1$, then $b_{S_{\mathrm{dep}}}(e_1)=001$ and $\hat{b}_{S_{\mathrm{dep}}}(e_1)=110$. In either case,  $\hat{b}_{S_{\mathrm{dep}}}(e_1)$ is lexicographically minimal, so $S_{\mathrm{dep}}(e_1)=S^\star(e_1)$. Since $d_{S_{\mathrm{dep}}}(F) = d(F)$ for all $F$, we also have $d_{S^\star}(F_i)=d_{S_{\mathrm{dep}}}(F_i)=d(F_i)$ for $i=0,1$.

Now let $1<i<m$, and suppose $S_{\mathrm{dep}}(e_j)=S^\star(e_j)$ and $d_{S^\star}(F_j)=d_{S_{\mathrm{dep}}}(F_j)=d(F_j)$ for all $1\leq j<i$.
\begin{itemize}
    \item If $d(F_i)=d(F_{i-1})+1$, then $b_{S_{\mathrm{dep}}}(e_i)$ is lexicographically minimal by the same argument as above, so $S_{\mathrm{dep}}(e_i)=S^\star(e_i)$. Moreover, $d_{S^\star}(F_i)=d_{S^\star}(F_{i-1})+1=d_{S_{\mathrm{dep}}}(F_{i-1})+1=d_{S_{\mathrm{dep}}}(F_i)$.
    \item If $d(F_i)=d(F_{i-1})$, then by Lemma~\ref{lem: depth bound}, $d_{S^\star}(F_i)\leq d(F_i)=d(F_{i-1})=d_{S^\star}(F_{i-1})$, so the number of simple strands of $S^\star$ enclosing $F_i$ is at most the number enclosing $F_{i-1}$. By Lemma \ref{lem: open strands clockwise}, all open strands of $S^{\star}$ are clockwise, so $S^\star$ cannot assign $e_i$ a single strand oriented into the boundary. If $\sigma_{v_i}(e_i)=1$, this means $100\neq\hat{b}_{S^\star}(e_i)$ so that $\hat{b}_{S^\star}(e_i)\in\{010,001\}$. Since $\hat{b}_{S_{\mathrm{dep}}}(e_i)=010\prec 001$ and $S^\star$ is a leading term stranding, $\hat{b}_{S^\star}(e_i)=\hat{b}_{S_{\mathrm{dep}}}(e_i)=010$. If $\sigma_{v_i}(e_i)=-1$, a parallel argument shows $\hat{b}_{S^\star}(e_i)=\hat{b}_{S_{\mathrm{dep}}}(e_i)=101$. Moreover, $d_{S^\star}(F_i)=d_{S^\star}(F_{i-1})=d_{S_{\mathrm{dep}}}(F_{i-1})=d_{S_{\mathrm{dep}}}(F_i)$.
    \item If $d(F_i)=d(F_{i-1})-1$, then Lemma~\ref{lem: depth bound} implies that strand depth in both $S^\star$ and $S_{\mathrm{dep}}$ must be smaller on $F_i$ than $F_{i-1}$. This means both $S^\star$ and $S_{\mathrm{dep}}$ assign a single strand oriented out of the boundary to $e_i$, and $d_{S^\star}(F_i)=d_{S^\star}(F_{i-1})-1=d_{S_{\mathrm{dep}}}(F_{i-1})-1=d_{S_{\mathrm{dep}}}(F_i)$.
\end{itemize}

This proves $S_{\mathrm{dep}}(e_i)=S^\star(e_i)$ for all $1\leq i\leq m$, so $x_{S^\star}=x_{S_{\mathrm{dep}}}$ and $S_{\mathrm{dep}}$ is a leading term stranding.
\end{proof}

\begin{remark}\label{rem: flat vertex obstruction}
At a flat vertex of a web, all three adjacent faces have the same integral depth, so the stranding rule illustrated in Figure~\ref{fig:sl3-edge-labels} would place the same pair of strands on all three incident edges. This is not a valid stranding, so the algorithm does not extend beyond simple webs.
\end{remark}

\begin{example}\label{ex:no saturated stranding}
Figure~\ref{fig:no saturated stranding} shows two $\mathfrak{sl}_3$ webs with all edges of weight $1$ and faces labeled by integral depth. The web on the top left is simple; the one on the bottom left has a flat vertex. On the top right, we see the leading term stranding $S_{\mathrm{dep}}$ from Theorem~\ref{thm:sl3 stranding}. On the bottom right, we demonstrate that the algorithm fails at the flat vertex.
\begin{figure}[h]
\centering
\raisebox{10pt}{\begin{tikzpicture}[scale=.75]
\draw[axis, <->] (0,0)--(7,0);
\begin{scope}[line width=0.7pt,decoration={markings, mark=at position 0.5 with {\arrow{<}}}]
\draw[postaction={decorate},line width=0.7pt] (1,0) to[out=270,in=180] (3,-2);
\draw[postaction={decorate},line width=0.7pt] (4,-2)--(4,-1);
\draw[postaction={decorate},line width=0.7pt] (5,0) to[out=270,in=0] (4,-1);
\draw[postaction={decorate},line width=0.7pt] (3,-1) to[out=180,in=270] (2,0);
\end{scope}
\begin{scope}[line width=0.7pt,decoration={markings, mark=at position 0.5 with {\arrow{>}}}]
    \draw[postaction={decorate},line width=0.7pt] (4,-1)--(3,-1);
    \draw[postaction={decorate},line width=0.7pt] (3,-2)--(3,-1);
    \draw[postaction={decorate},line width=0.7pt] (3,-2)--(4,-2);
\draw[postaction={decorate},line width=0.7pt] (6,0) to[out=270,in=0] (4,-2);
\end{scope}
\fill(1,0) circle (2.5pt);
\fill(3,-1) circle (2.5pt);
\fill(4,-1) circle (2.5pt);
\fill(3,-2) circle (2.5pt);
\fill(4,-2) circle (2.5pt);
\fill(2,0) circle (2.5pt);
\fill(5,0) circle (2.5pt);
\fill(6,0) circle (2.5pt);
\node at (1.75, -1) {\textcolor{depthcolor}{$1$}};
\node at (3.5, -1.5) {\textcolor{depthcolor}{$1$}};
\node at (5.25, -1) {\textcolor{depthcolor}{$1$}};
\node at (3.5, -.5) {\textcolor{depthcolor}{$2$}};
\end{tikzpicture}}
\hspace{.25in}
\raisebox{10pt}{\begin{tikzpicture}[scale=.75]
\draw[axis, <->] (0,0)--(7,0);
\begin{scope}[line width=1.1pt,decoration={markings, mark=at position 0.6 with {\arrow{<}}}]
\draw[postaction={decorate},line width=1.1pt, blue] (1,0) to[out=270,in=180] (3,-2);
\draw[postaction={decorate},line width=1.1pt, blue] (3.05,-2)--(3.05,-1);
\draw[postaction={decorate},line width=1.1pt, red] (3,-2)--(4,-2);
\draw[postaction={decorate},line width=1.1pt, red] (3.95,-2)--(3.95,-1);
\end{scope}
\begin{scope}[line width=1.1pt,decoration={markings, mark=at position 0.4 with {\arrow{>}}}]
\draw[postaction={decorate},line width=1.1pt, blue] (4.05,-2)--(4.05,-1);
\draw[postaction={decorate},line width=1.1pt, red] (5,0) to[out=270,in=0] (4,-1);
\draw[postaction={decorate},line width=1.1pt, blue] (6,0) to[out=270,in=0] (4,-2);
\draw[postaction={decorate},line width=1.1pt, red] (2.95,-2)--(2.95,-1);
\draw[postaction={decorate},line width=1.1pt, blue] (4,-1)--(3,-1);
\draw[postaction={decorate},line width=1.1pt, red] (3,-1) to[out=180,in=270] (2,0);
\end{scope}
\fill(1,0) circle (2.5pt);
\fill(3,-1) circle (2.5pt);
\fill(4,-1) circle (2.5pt);
\fill(3,-2) circle (2.5pt);
\fill(4,-2) circle (2.5pt);
\fill(2,0) circle (2.5pt);
\fill(5,0) circle (2.5pt);
\fill(6,0) circle (2.5pt);
\end{tikzpicture}}

\begin{tikzpicture}[scale=.9, yscale=-1]
\draw[axis, <->] (.5,0)--(5.5,0);
\begin{scope}[line width=0.7pt,decoration={markings,mark=at position 0.5 with {\arrow{<}}}]
\draw[postaction={decorate}] (1,0) to[out=90, in=180] (3,4);
\draw[postaction={decorate}] (5,0) to[out=90, in=0] (3,4);
\draw[postaction={decorate}] (3,3)--(3,4);
\draw[postaction={decorate}] (2,0) to[out=90, in=180] (2.5,2);
\draw[postaction={decorate}] (3,1)--  (2.5,2);
\draw[postaction={decorate}] (3,3)--  (2.5,2);
\draw[postaction={decorate}] (4,0) to[out=90, in=0] (3.5,2);
\draw[postaction={decorate}] (3,1)--  (3.5,2);
\draw[postaction={decorate}] (3,3)--  (3.5,2);
\draw[postaction={decorate}] (3,1)--(3,0);
\end{scope}
\foreach \x in {1,...,5}
  \fill(\x,0) circle (2.5pt);
\fill(3,1) circle (2.5pt);
\fill(2.5, 2) circle (2.5pt);
\fill(3.5, 2) circle (2.5pt);
\fill(3,3) circle (2.5pt);
\fill(3,4) circle (2.5pt);
\node at (2,2.5) {\textcolor{depthcolor}{$1$}};
\node at (4,2.5) {\textcolor{depthcolor}{$1$}};
\node at (3,2) {\textcolor{depthcolor}{$2$}};
\node at (2.5,.5) {\textcolor{depthcolor}{$2$}};
\node at (3.5,.5) {\textcolor{depthcolor}{$2$}};
\end{tikzpicture}
\hspace{.5in}
\begin{tikzpicture}[scale=.9, yscale=-1]
\draw[axis, <->] (.5,0)--(5.5,0);
\begin{scope}[line width=1.1pt,decoration={markings,mark=at position 0.5 with {\arrow{<}}}]
\draw[blue, postaction={decorate}] (1,0) to[out=90, in=180] (3,4);
\draw[red, postaction={decorate}] (3,4) to[out=0, in=90] (5,0);
\draw[blue,  postaction={decorate}] (2.95,4)--(2.95,3);
\draw[red,  postaction={decorate}] (3.05,3)--(3.05,4);
\draw[blue, postaction={decorate}] (2,0) to[out=90, in=180] (2.5,2);
\draw[red, postaction={decorate}] (3.05,1)--  (2.55,2);
\draw[blue, postaction={decorate}] (2.45,2)--  (2.95,1);
\draw[red, postaction={decorate}] (2.5,2)--(3,3);
\draw[red, postaction={decorate}] (3.5,2) to[out=0, in=90] (4,0);
\draw[red, postaction={decorate}] (3.05,1)--  (3.55,2);
\draw[blue, postaction={decorate}] (3.45,2)--  (2.95,1);
\draw[blue, postaction={decorate}] (3,3)--  (3.5,2);
\draw[red, postaction={decorate}] (3.05,1)--(3.05,0);
\draw[blue, postaction={decorate}] (2.95,0)--(2.95,1);
\end{scope}
\foreach \x in {1,...,5}
  \fill(\x,0) circle (2.5pt);
\fill(3,1) circle (2.5pt);
\fill(2.5, 2) circle (2.5pt);
\fill(3.5, 2) circle (2.5pt);
\fill(3,3) circle (2.5pt);
\fill(3,4) circle (2.5pt);
\end{tikzpicture}
\caption{Left: a simple $\mathfrak{sl}_3$ web (top) and an $\mathfrak{sl}_3$ web with a flat vertex (bottom). Right: the leading term stranding $S_{\mathrm{dep}}$ from Theorem~\ref{thm:sl3 stranding} applied to the simple web (top), and the failure of the algorithm at the flat vertex (bottom).}\label{fig:no saturated stranding}
\end{figure}
\end{example}

\begin{remark}\label{rem: sln obstruction}
Our stranding construction for simple $\mathfrak{sl}_3$ webs hinged on the fact that there are exactly three ways to strand an $\mathfrak{sl}_3$ web edge --- one that increases strand depth, one that decreases it, and one that preserves it. In this case, the choice of stranding compatible with integral depth was then forced as illustrated in Figure~\ref{fig:sl3-edge-labels}. For $\mathfrak{sl}_n$ webs with $n\geq 4$, the situation is more complicated. For instance, consider a weight-$1$ edge $e$ whose binary label has its single $1$ in position $j$, for any $2\leq j\leq n-1$. In this case, $e$ is stranded with a simple $(j-1)$-strand directed against $e$ and a simple $j$-strand directed with $e$. Therefore, strand depth on both sides of $e$ is the same. There are $n-2$ such strandings of this form, and so there are multiple choices whenever $n\geq 4$. Thus integral depth alone does not determine a unique stranding when $n\geq 4$, and a finer statistic on webs would be needed to recover an algorithm of the same flavor.
\end{remark}

\subsection{Strand reversal and edge-Kempe equivalence}\label{subsection: strand reversal}

The stranding algorithm of Theorem~\ref{thm:sl3 stranding} fails for $\mathfrak{sl}_3$ webs with flat vertices. We would like an algorithm for finding leading term strandings for these webs. One possibility might be to apply a sequence of strand reversals (see Figure~\ref{fig: strand reversal example}) to an arbitrary valid stranding to arrive at a leading term stranding. We show below that such a sequence always exists, though we do not give an algorithm for finding one.

\begin{definition}[Edge-Kempe equivalence]\label{def: edge-Kempe equivalence}
Recall that a proper edge-$n$-coloring of a graph is an assignment of one of $n$ colors to each edge so that no two edges sharing a vertex receive the same color.
\begin{itemize}
    \item Given a proper edge-$n$-coloring and a choice of two of the $n$ colors, an {\bf edge-Kempe chain} is a maximal connected subgraph using only edges of those two colors; since the coloring is proper, every vertex has degree at most two in this subgraph, so an edge-Kempe chain is either a path or a cycle.
    \item An {\bf edge-Kempe switch} swaps the two colors along an edge-Kempe chain, producing a new proper edge-$n$-coloring.
    \item Two colorings are {\bf edge-Kempe equivalent} if one can be obtained from the other by a sequence of edge-Kempe switches.
\end{itemize}
\end{definition}

\begin{theorem}[Belcastro--Haas, {\cite[Corollary 4.3]{belcastroHaas}}]\label{thm: belcastro haas}
Any two proper edge-$3$-colorings of a planar, bipartite, cubic graph are edge-Kempe equivalent.
\end{theorem}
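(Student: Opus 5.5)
The plan is to prove the statement by induction on the number of vertices. The class is enlarged to planar, bipartite, cubic \emph{multigraphs}, so that the reduction step never leaves the class.

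\textbf{Base case.} The smallest member is the theta graph: two vertices joined by three parallel edges. It has exactly one edge-$3$-coloring up to permuting the colors. For any two colors $x,y$, the Kempe chain in $x,y$ is the $2$-cycle formed by the two edges with those colors, and switching it applies the transposition $(x\,y)$. Transpositions generate $S_3$, so all colorings of the theta graph are edge-Kempe equivalent. The same observation shows, for every graph in the class, that relabelling colors by any permutation is achievable by Kempe switches. This is because in a cubic graph each $x,y$-subgraph is a $2$-factor, so switching all of its chains realizes $(x\,y)$ globally.

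\textbf{Finding a reducible face.} Let $G$ have more than two vertices. Since $G$ is bipartite, every face has even length. If no face had length $\le 4$, then every face would have length $\ge 6$. For a cubic plane graph Euler's formula gives $\sum_F (6-|F|)=12$, which rules this out. So some face $C$ has length $2$ or $4$.

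A $2$-face is a pair of parallel edges $uv$. The third edges $u'u$ and $vv'$ must then receive the same color, and we can reduce by deleting $u,v$ and joining $u'v'$.

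For a $4$-face $C=abcd$, with outer neighbors $a',b',c',d'$, we reduce as follows. Delete $ab$ and $cd$ and suppress the now-degree-$2$ vertices, so that $G'$ has the edges $a'd'$ and $b'c'$. Each of the replaced paths $a'\,a\,d\,d'$ and $b'\,b\,c\,c'$ has length $3$, so $G'$ is again planar, bipartite and cubic, and it has fewer vertices. A coloring of $G$ descends to $G'$ exactly when $ab$ and $cd$ receive the same color $\alpha$. In that case $ad$ and $bc$ avoid $\alpha$, the two outer edges on each path agree, and the suppressed edge inherits their color. Conversely, every coloring of $G'$ lifts uniquely to a coloring of $G$ with $ab$ and $cd$ both colored $\alpha$, once $\alpha$ is forced by the colors at $a'd'$ and $b'c'$.

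\textbf{Inductive step.} There are three steps.

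\begin{itemize}
\item \emph{Normalization.} Show that any coloring of $G$ can be Kempe-switched so that $ab$ and $cd$ get the same color. Suppose $C$ uses three colors, say $ab=\alpha$ and $cd=\beta$ with $\alpha\neq\beta$. Consider the $\alpha,\beta$-chain through $cd$. If it does not also pass through $ab$, switching it makes $cd$ colored $\alpha$. If it does, we instead try the other pair of opposite edges ($ad$, $bc$) with the analogous reduction, or switch a chain in a different color pair first. The planarity and Jordan-curve argument used for Tait/Kempe chains around a $4$-face should show that one of these options always succeeds. If $C$ is two-colored, then $ab$ and $cd$ already agree, or $ad$ and $bc$ do, and we use the corresponding reduction.
\item \emph{Lifting switches.} Show that a Kempe switch in $G'$ lifts to a Kempe switch, or a short sequence of switches, in $G$ between the corresponding lifted colorings. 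A chain in $G'$ through $a'd'$ corresponds in $G$ to the same chain extended along $a'\,a\,d\,d'$, possibly together with $ab$ and $cd$ when $\alpha$ is one of the two switched colors. One checks that the lift of the switched coloring is again of the normalized form, up to one extra switch of the $2$-colored cycle on $C$ if needed.
\item \emph{Conclusion.} Given two colorings of $G$, normalize both with respect to the same pair of opposite edges. Apply induction in $G'$, and lift each switch back to $G$.
\end{itemize}

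\textbf{Main obstacle.} I expect the hardest step to be the normalization, together with ensuring both colorings are normalized with respect to the \emph{same} opposite pair. The case where the $\alpha,\beta$-chain through $cd$ returns through $ab$ is the delicate one. There I would use planarity: such a chain separates $b'$ from $c'$ or $a'$ from $d'$. That separation should force the $\alpha,\gamma$-chain at the appropriate edge to avoid $C$, so that switching it fixes the other pair of opposite edges. The remaining cost is the case bookkeeping for lifting switches.
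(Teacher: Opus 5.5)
The paper does not prove this statement. It quotes it from Belcastro--Haas and uses it only as input to Proposition~\ref{prop: kempe general}, so your argument has to stand on its own. Your framework is workable: induction over multigraphs, Euler's formula giving a $2$- or $4$-face, the digon reduction, and the $4$-face reduction with lifting of switches. The gap is the normalization step, which is where the content lies. You never show that the $\alpha,\beta$-chain through $cd$ avoids $ab$, and the fallback you sketch for the case where it does not is not viable. If $ab=\alpha\neq\beta=cd$, properness on the $4$-cycle already forces $bc=da=\gamma$. So the coloring is automatically normalized for $\{ad,bc\}$, and ``trying the other pair'' only relocates the problem of getting both colorings onto a common pair. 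Properness also forces the spokes: $aa'=bb'=\beta$ and $cc'=dd'=\alpha$. Hence, besides the $\alpha,\beta$-chains, the only chains meeting $C$ are the $\alpha,\gamma$-chain $d'dabcc'$ and the $\beta,\gamma$-chain $a'adcbb'$, and switching either one leaves $ab\neq cd$. In the configuration you call delicate, the $\alpha,\beta$-cycle would pass through all four spokes, so it does not separate $b'$ from $c'$ or $a'$ from $d'$. Your proposed $\alpha,\gamma$-chain that avoids $C$ and ``fixes the other pair'' therefore has no basis, and the other pair needs no fixing anyway.

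The missing observation is that the delicate case never occurs, by bipartite parity plus planarity. Take $a,c\in X$ and $b,d\in Y$. Along any cycle of a bipartite graph the edges are traversed alternately $X\to Y$ and $Y\to X$. Along an $\alpha,\beta$-cycle the colors also alternate, so all $\alpha$-edges of such a cycle are traversed in the same sense. Suppose a single $\alpha,\beta$-cycle $Z$ contained both $a'abb'$ and $c'cdd'$. Traversing $ab$ as $a\to b$ would force traversing $cd$ as $d\to c$, so $Z$ meets $a,b,d,c$ in this cyclic order. The $\gamma$-edges $ad$ and $bc$ are off $Z$ and both lie on the side of $Z$ containing the face $C$. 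Their endpoints interleave on $Z$, so they would have to cross, which is impossible. Hence the $\alpha,\beta$-chain through $cd$ never contains $ab$, and switching it two-colors $C$: $ab=cd=\alpha$, $bc=da=\gamma$, and all spokes become $\beta$. So every coloring is one switch away from a coloring in which $C$ is two-colored. Normalizing both colorings this way puts them on the same pair, and your reduction and induction then go through.

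Two smaller repairs are needed in the lifting step. First, lifts are not unique: when $a'd'$ and $b'c'$ share a color, a coloring of $G'$ has two lifts, which differ by switching $C$ (then a Kempe cycle). Second, when $a'd'$ and $b'c'$ have different colors $\gamma\neq\delta$ and the switched pair is $\{\gamma,\alpha\}$, the lifted chain runs $a'abcdd'$ (through $bc$, not $ad$) rather than along $a'add'$.
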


The graphs underlying $\mathfrak{sl}_3$ webs are planar and bipartite but not necessarily cubic, since boundary vertices are univalent, so Theorem~\ref{thm: belcastro haas} does not apply to them directly. We first generalize Theorem~\ref{thm: belcastro haas} to graphs with univalent boundary vertices and then apply this generalization to $\mathfrak{sl}_3$ webs.

\begin{proposition}\label{prop: kempe general}
Let $H$ be a bipartite half-plane graph with trivalent interior vertices and univalent boundary vertices. Then any two proper edge-$3$-colorings of $H$ are edge-Kempe equivalent.
\end{proposition}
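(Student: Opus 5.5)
The idea is to reduce to Theorem~\ref{thm: belcastro haas} by embedding $H$ into a planar, bipartite, cubic graph $\widehat H$. We want every proper edge-$3$-coloring of $H$ to extend to $\widehat H$, and we want Kempe switches on $\widehat H$ to restrict to sequences of Kempe switches on $H$.

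\textbf{Construction of $\widehat H$.} Take two mirror copies of $H$: $H$ itself, and its reflection $\bar H$ across the boundary axis, lying in the upper half-plane. Identify each boundary vertex $v_i$ with its mirror image. This merges the two boundary edges $e_i$ and $\bar e_i$ into a path of length two through a bivalent vertex. Suppress that vertex, replacing the path by a single edge $\epsilon_i$. Since $H$ is bipartite, the two ends of $\epsilon_i$ are the interior endpoint $u_i$ of $e_i$ and its mirror $\bar u_i$. Give $\bar u_i$ the color class opposite to that of $u_i$ in $\bar H$; this keeps $\widehat H$ bipartite. Every vertex of $\widehat H$ is trivalent, and $\widehat H$ is planar because the reflection of a half-plane embedding is a plane embedding.

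One degenerate case needs care: a component that is a single edge joining two boundary vertices. We handle it separately. Such an edge is its own Kempe chain for any pair of colors containing its color, so its color can be changed freely. We can therefore set these edges aside, or double them and add vertices, and focus on the cubic part. If $\widehat H$ acquires multiple edges, we must check that Belcastro--Haas allows multigraphs; if not, we insert small bipartite cubic gadgets.

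\textbf{Extending colorings.} Given a proper coloring $\kappa$ of $H$, define $\widehat\kappa$ by coloring $\bar H$ with the mirror of $\kappa$ and giving $\epsilon_i$ the color $\kappa(e_i)$. This is proper: at $u_i$ the colors match $\kappa$, and at $\bar u_i$ they match the mirror.

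Now take two colorings $\kappa,\kappa'$ of $H$. By Theorem~\ref{thm: belcastro haas}, $\widehat\kappa$ and $\widehat{\kappa'}$ are connected by a sequence of Kempe switches in $\widehat H$.

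\textbf{Restricting switches.} This is the main obstacle. Under the map that sends a coloring of $\widehat H$ to its restriction on $H$ (with $e_i$ inheriting the color of $\epsilon_i$), intermediate colorings restrict to proper colorings of $H$. However, a Kempe chain $K$ in $\widehat H$ restricts to $H\cup\{e_i\}$ as a disjoint union of pieces. Each piece is either a full Kempe chain of $H$, or a path ending at a boundary vertex where $K$ crossed through $\epsilon_i$ into $\bar H$. The key claim is that every such piece is itself a maximal two-colored component in $H$. A piece ending at a boundary vertex $v_i$ is maximal there because $v_i$ is univalent in $H$. Maximality at interior vertices is inherited from $\widehat H$. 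Hence switching $K$ in $\widehat H$ amounts, on $H$, to switching finitely many Kempe chains of $H$ one at a time. Since these pieces are disjoint, the order does not matter.

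Composing these restrictions along the sequence transforms $\kappa$ into $\kappa'$. This proves $H$-equivalence.
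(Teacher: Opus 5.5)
Your proposal is correct and is essentially the paper's proof. Both reflect $H$ across the boundary axis with the bipartition swapped on the mirror copy, fuse each matched pair of boundary edges into one edge, and extend colorings by mirroring. Both then invoke Belcastro--Haas and restrict each Kempe switch to the pieces of the chain lying in $H$, which are Kempe chains of $H$ because boundary vertices are univalent.

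Two things in your proposal go beyond the paper, which passes over both cases. First, you handle separately components that are a single boundary-to-boundary edge. Second, you flag the parallel edges created by the gluing, which arise whenever an interior vertex has two or three boundary edges.

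These parallel edges need no gadgets. In any proper coloring the two edges of a digon form a two-edge Kempe cycle. So, apart from the three-edge theta component, which is checked by hand, a digon together with its two pendant edges can be collapsed to a single edge, and one inducts down to the simple case.
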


\begin{proof}
Let $\overline{H}$ denote $H$ with the two classes of its bipartition swapped. Reflect $\overline H$ across the boundary axis and glue each boundary vertex of $H$ to the corresponding boundary vertex of $\overline H$, matched via the left-to-right order along the boundary axis, erasing these vertices so that each matched pair of boundary edges fuses into a single edge. Call the resulting graph $H'$. Since the gluing always pairs a vertex of one bipartition class with a vertex of the other, $H'$ is well-defined and inherits a bipartition from $H$ and $\overline H$. As $H'$ has no boundary vertices, it is cubic, and it inherits planarity from $H$.

Given a proper edge-$3$-coloring $c$ of $H$, let $\overline c$ be the same coloring transported to $\overline H$ under the identification of edges, and let $c'$ be the coloring of $H'$ that restricts to $c$ on $H$ and $\overline c$ on $\overline H$; this is well-defined since the color on each pair of glued edges agrees, so $c'$ is a proper edge-$3$-coloring of $H'$. Given two colorings $c_1, c_2$ of $H$, transport them to colorings $c_1', c_2'$ of $H'$ in this way. By Theorem~\ref{thm: belcastro haas}, $c_1'$ and $c_2'$ are edge-Kempe equivalent.

We produce a corresponding sequence of edge-Kempe switches on $H$ relating $c_1$ and $c_2$. Consider an edge-Kempe switch in a sequence taking $c_1'$ to $c_2'$. If the corresponding edge-Kempe chain lies entirely in $\overline H$, ignore this move. If the chain lies entirely in $H$, perform the corresponding edge-Kempe switch on the coloring of $H$. If the chain crosses the boundary axis, it decomposes into a union of arcs on either side of the axis, each of which is an edge-Kempe chain in $H$ or $\overline H$ for the colorings restricted to those graphs. The edge-Kempe chains lying in $H$ are pairwise disjoint, so we may perform edge-Kempe switches along them in any order; doing so reproduces, on $H$, the effect of the original edge-Kempe switch on $H'$. Applying this to the full sequence carrying $c_1'$ to $c_2'$ produces a sequence of edge-Kempe switches on $H$ carrying $c_1$ to $c_2$.
\end{proof}

\begin{corollary}\label{cor: sl3 kempe}
Let $G$ be an $\mathfrak{sl}_3$ web. Any two valid strandings of $G$ are related by a sequence of strand reversals.
\end{corollary}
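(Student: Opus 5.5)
We would translate valid strandings of an $\mathfrak{sl}_3$ web into proper edge-$3$-colorings of its underlying graph, and translate edge-Kempe switches into strand reversals. Then Proposition~\ref{prop: kempe general} finishes the argument. By the edge flip relation we may take every edge of $G$ to have weight $1$. Every interior vertex is then a source or a sink, so each edge joins a source to a sink, or a boundary vertex to an interior vertex. Hence the underlying graph $H$ of $G$ is bipartite, with trivalent interior vertices and univalent boundary vertices. If an edge joins two boundary vertices, it is an isolated component, and we handle it directly.

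The first step is the dictionary. By Theorem~\ref{thm:bijection}, a valid stranding is the same as a valid binary labeling. On a weight-$1$ edge the label is one of $100,010,001$, so it amounts to a choice of position $j\in\{1,2,3\}$. Take an interior vertex $v$ that is a sink (the source case is identical). All three edges point into $v$, so validity says $\sum_i b(e_i)_j$ is independent of $j$. The entries sum to $3$, so each column sum is $1$, and the three edges carry distinct positions. Conversely, any assignment of distinct positions around each interior vertex satisfies this condition. So $\mathcal{B}in(G)$ is exactly the set of proper edge-$3$-colorings of $H$, with color $j$ meaning ``the $1$ sits in position $j$''. Boundary vertices impose no constraint, and they are univalent anyway.

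The second step matches the moves. Fix colors $i<j$. An edge lies on an $(i,j)$-strand of $S$ exactly when $b_S(e)_i\neq b_S(e)_j$. On a weight-$1$ edge this means its color is $i$ or $j$. So the connected components of $L_{(i,j)}(S)$ are exactly the edge-Kempe chains for colors $\{i,j\}$. Reversing such a strand swaps entries $i$ and $j$ on each of its edges, as in Definition~\ref{def: strand reversal}. That is precisely the edge-Kempe switch along the chain. We should check that the components of $L_{(i,j)}(S)$ in Definition~\ref{def: ij-strands} coincide with the two-colored components: at a trivalent vertex at most two edges carry colors in $\{i,j\}$, so the components agree as subgraphs.

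Finally, given two valid strandings $S_1,S_2$, Proposition~\ref{prop: kempe general} gives a sequence of edge-Kempe switches from $c_{S_1}$ to $c_{S_2}$. Each switch is a strand reversal, which is valid by Lemma~\ref{lem: strand reversal}. Translating back through Theorem~\ref{thm:bijection} gives the required sequence of reversals.

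The main obstacle is the edge flip normalization. The statement concerns strandings of the original $G$, while the argument runs on $G_{\varphi(\mathcal{E})}$, so we need that flipping edges induces a bijection of strandings that commutes with reversal. For a flipped edge, $b\mapsto\vec 1-b$ should carry valid labelings to valid labelings. It preserves the relation $b_i\neq b_j$ and commutes with swapping entries $i$ and $j$, so $(i,j)$-strands and reversals are the same curves with the same effect. I would verify this compatibility carefully first, since validity at a vertex with some flipped incident edges needs the sign $\sigma_v$ to absorb the complement.
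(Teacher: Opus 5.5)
Your proposal is correct and follows essentially the same route as the paper. Both arguments identify valid strandings of the weight-$1$ normalized web with proper edge-$3$-colorings; your ``position of the $1$'' coloring is the paper's blue/purple/red coloring of labels $100$, $010$, $001$. Both then observe that $(i,j)$-strands are exactly the $\{i,j\}$ edge-Kempe chains, so a strand reversal is an edge-Kempe switch, and finish by invoking Proposition~\ref{prop: kempe general}.

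Your extra checks are sound. Edge flips act on labelings by $b\mapsto\vec 1-b$, which is compatible with validity, with the $(i,j)$-strands, and with reversal. The separate treatment of an edge joining two boundary vertices is also correct. The paper covers both points only through its standing weight-$1$ assumption for the section.
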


\begin{proof}
The web graph $G$ is oriented such that every vertex is a source or sink, so it is bipartite. Hence $G$ satisfies the hypotheses of Proposition~\ref{prop: kempe general}.

Each stranding $S$ of $G$ can be thought of as a proper edge-$3$-coloring of $G$ with colors red, blue, and purple (red and blue together). We claim that $(i,j)$-strands of $S$ are edge-Kempe chains of the corresponding edge coloring, so that a strand reversal exactly coincides with an edge-Kempe switch. Consider, say, a $(1,2)$-strand $\gamma$. Since $\gamma$ is a $(1,2)$-strand, every edge of $\gamma$ carries a blue strand: the edge is colored blue when the strand runs with the edge's orientation, and purple when it runs against the edge's orientation (with a red strand then running with the edge). Since the vertices of $G$ are sources and sinks, the direction of the edges of $G$ alternate along $\gamma$, so these colors alternate as well: $\gamma$ is a blue--purple alternating chain. It remains to check maximality: if $\gamma$ is open, it terminates at boundary vertices, which are univalent, so the chain cannot be extended past either end; if $\gamma$ is closed, it is already a closed chain, so maximality is automatic. The same argument applies to $(1,3)$- and $(2,3)$-strands, with red--blue and red--purple in place of blue--purple. Hence strand reversal along a strand of $S$ is exactly an edge-Kempe switch on the corresponding edge coloring.

Given two valid strandings $S_1, S_2$ of $G$, view them as proper edge-$3$-colorings as above; by Proposition~\ref{prop: kempe general}, they are edge-Kempe equivalent. Since strand reversal along an $(i,j)$-strand is exactly an edge-Kempe switch on the corresponding edge-Kempe chain, this sequence translates directly into a sequence of strand reversals taking $S_1$ to $S_2$.
\end{proof}

In particular, the base stranding of $G$ defined in \cite[Section 6.2]{RTStranding} and a leading term stranding are related by a sequence of strand reversals, though we do not give an algorithm for finding such a sequence; doing so remains open.


\def\cprime{$'$}

\end{document}